\newtheorem{theorem}{Theorem}[section]
\newtheorem{lemma}[theorem]{Lemma}
\newtheorem{proposition}[theorem]{Proposition}
\theoremstyle{definition}
\newtheorem{definition}[theorem]{Definition}
\theoremstyle{remark}
\newtheorem{remark}[theorem]{Remark}
\numberwithin{equation}{section}
\DeclareMathAlphabet\gothic{U}{euf}{m}{n}
\def\RR{\mathbb{R}}
\def\Om{\Omega}
\def\bOm{\overline{\Om}}
\begin{document}
\title[Degenerate parabolic equations]{On some degenerate non-local
parabolic equation associated with the fractional $p$-Laplacian}
\author{Ciprian G. Gal}
\address{C. G. Gal, Department of Mathematics, Florida International
University, Miami, 33199 (USA)}
\email{cgal@fiu.edu}
\author{Mahamadi Warma}
\address{M.~Warma, University of Puerto Rico, Faculty of Natural Sciences,
Department of Mathematics (Rio Piedras Campus), PO Box 70377 San Juan PR
00936-8377 (USA)}
\email{mahamadi.warma1@upr.edu, mjwarma@gmail.com}
\thanks{The work of the second author is partially supported by the Air
Force Office of Scientific Research under the Award No. FA9550-15-1-0027}
\subjclass[2010]{35R11, 35K55, 35K65}
\date{}
\keywords{Fractional $p$-Laplace operator, Dirichlet boundary conditions,
degenerate non-linear parabolic equations, existence and regularity of local
solutions, blow up.}
%\dedicatory{Communicated by S. Zelik}

\begin{abstract}
Let $\Omega\subset\mathbb{R}^N$ be an arbitrary bounded open set. We
consider a degenerate parabolic equation associated to the fractional $p $%
-Laplace operator $\left( -\Delta \right) _{p}^{s}$\ ($p\geq 2$, $s\in
\left( 0,1\right) $) with the Dirichlet boundary condition and a monotone
perturbation growing like $\left\vert \tau\right\vert ^{q-2}\tau,$ $q>p$ and
with bad sign at infinity as $\left\vert \tau\right\vert \rightarrow \infty $%
. We show the existence of locally-defined strong solutions to the problem
with any initial condition $u_{0}\in L^{r}(\Omega )$ where $r\geq 2$
satisfies $r>N(q-p)/sp$. Then, we prove that finite time blow-up is possible
for these problems in the range of parameters provided for $r,p,q$ and the
initial datum $u_0$.
\end{abstract}

\maketitle
\tableofcontents

\section{Introduction}

The article is concerned with the following non-local initial-boundary value
problem for the degenerate parabolic equation
\begin{equation}
\begin{cases}
\displaystyle\partial _{t}u(x,t)+(-\Delta
)_{p}^{s}u(x,t)-|u(x,t)|^{q-2}u(x,t)=f(x,t) & (x,t)\in \Omega \times (0,T)
\\
u(x,t)=0 & (x,t)\in (\mathbb{R}^{N}\backslash \Omega )\times (0,T) \\
u(x,0)=u_{0}(x) & x\in \Omega .%
\end{cases}
\label{par-pro}
\end{equation}%
Here $u_{0}\in L^{r}(\Omega )$, $2\leq p,q,r<\infty $, $T>0$, $f$ is a given
function, $(-\Delta )_{p}^{s}$ denotes the fractional $p$-Laplace operator
and $\Omega $ is an arbitrary bounded open subset of $\mathbb{R}^{N}$, $%
N\geq 1$. To introduce the fractional $p$-Laplace operator, let $0<s<1$, $%
p\in (1,\infty )$ and set%
\begin{equation*}
\mathcal{L}^{p-1}(\mathbb{R}^{N}):=\left\{u:\;\mathbb{R}^{N}\rightarrow
\mathbb{R}\;\mbox{
measurable, }\;\int_{\mathbb{R}^{N}}\frac{|u(x)|^{p-1}}{(1+|x|)^{N+ps}}%
\;dx<\infty \right\}.
\end{equation*}%
For $u\in \mathcal{L}^{p-1}(\mathbb{R}^{N})$, $x\in \mathbb{R}^{N}$ and $%
\varepsilon >0$, we let
\begin{equation*}
(-\Delta )_{p,\varepsilon }^{s}u(x)=C_{N,p,s}\int_{\{y\in \mathbb{R}%
^{N},|y-x|>\varepsilon \}}|u(x)-u(y)|^{p-2}\frac{u(x)-u(y)}{|x-y|^{N+ps}}dy,
\end{equation*}%
where the normalized constant%
\begin{equation*}
C_{N,p,s}=\frac{s2^{2s}\Gamma\left(\frac{ps+p+N-2}{2}\right)}{\pi^{\frac
N2}\Gamma(1-s)}
\end{equation*}%
and $\Gamma $ is the usual Gamma function (see, e.g., \cite%
{BCF,Caf1,Caf2,Caf3,NPV} for the linear case $p=2,$ and \cite{War-IP,War-Pr}
for the general case $p\in (1,\infty )$). The fractional $p$-Laplacian $%
(-\Delta )_{p}^{s}$ is defined by the formula
\begin{align}
(-\Delta )_{p}^{s}u(x)=&C_{N,p,s}\mbox{P.V.}\int_{\mathbb{R}%
^{N}}|u(x)-u(y)|^{p-2}\frac{u(x)-u(y)}{|x-y|^{N+ps}}dy  \notag \\
=&\lim_{\varepsilon \downarrow 0}(-\Delta )_{p,\varepsilon
}^{s}u(x),\;\;\;x\in \mathbb{R}^{N},  \label{eq11}
\end{align}%
provided that the limit exists. We notice that if $0<s<\left( p-1\right) /p$
and $u$ is smooth (i.e., at least bounded and Lipschitz continuous), then
the integral in \eqref{eq11} is in fact not really singular near $x$.

The case $p=2$ and $f\equiv 0$, which corresponds to the case of a
semilinear fractional heat equation, sufficient conditions for the existence
of weak solutions with $u_{0}\in L^{2}\left( \Omega \right) ,$ and strong
solutions for $u_{0}\in L^{\infty }\left( \Omega \right) ,$ have already
been proved in \cite{GM}. Additionally, further dynamical properties (i.e.,
existence of finite dimensional global attractors and global asymptotic
stabilization to steady states as time goes to infinity) were also derived
for a semilinear parabolic problem of the form%
\begin{equation}
\partial _{t}u+(-\Delta )_{2}^{s}u+h\left( u\right) =0\;\;\mbox{ in }%
\;\Omega \times (0,\infty ),\text{ }u=0\;\mbox{ on }\;(\mathbb{R}%
^{N}\backslash \Omega )\times (0,\infty ),  \label{semi}
\end{equation}%
with nonlinearity $h\left( \tau\right) $ which has a good sign at infinity
as $\left\vert \tau\right\vert \rightarrow \infty $, and which is coercive
in a precise sense. Finally, some blow-up results were also proved in \cite%
{GM} for (\ref{semi}) with $h\left( \tau\right) \sim -\left\vert
\tau\right\vert ^{q-2}\tau $, as $\left\vert \tau\right\vert \rightarrow
\infty $, emphasizing the same critical blow-up exponent $q=p=2$ as for the
corresponding parabolic equation associated with the classical Laplace
operator $-\Delta $. We extend our work of \cite{GM} to prove the local in
time existence of solutions to parabolic equations with degenerate
fractional diffusion and more singular kernels using an approach based on
\cite{AO1, AO2}\ and also developed further in \cite{Aka}. Although our
general scheme follows closely that of \cite{Aka, AO1, AO2}, many of the key
lemmas used in the case of the classical $p$-Laplace operator cannot be
adapted or exploited in their classical form to deal with the fractional $p$%
-Laplacian $\left( -\Delta \right) _{p}^{s}$ for $s\in \left( 0,1\right) $
and $p\in \left( 1,\infty \right) $. Hence, we develop some new techniques
including some new functional inequalities allowing us to extend the results
of \cite{Aka} in the present setting. Among these new tools that we derive
it is worth mentioning a \emph{nonlinear version} of the classical
Stroock-Varopoulos inequality (see Lemma \ref{comp}) which is an important
inequality in the theory of Markovian semigroups, and a new coercitivity
estimate (see Lemma \ref{lem-9}) which is also crucial in the proofs for the
energy estimates. In particular, Lemma \ref{comp} extends the classical
Stroock-Varopoulos inequality which was available only in the case $p=2$
(see \cite{Str, Var}) and covers also the case when $p\neq 2$. Lemma \ref%
{comp} is the main tool in proving our first main result of Theorem \ref%
{main-theo}. Then we also generalize the blow-up results of \cite{GM} to the
present case (see Theorem \ref{theo2}) when $q>p$ following a technique
adapted from \cite{LW}. We emphasize that our results hold without any
regularity assumptions on $\Omega $. There is vast literature on degenerate
parabolic equations involving the classical diffusion operator $-\Delta _{p}$%
. We refer the reader to the following list \cite{Aka,AO1, AO2, Br73, LW}
(and references contained therein)\ which is not meant to be exhaustive.

To the best of our knowledge, little is known about parabolic problems
associated with the fractional $p$-Laplacian $(-\Delta )_{p}^{s}$ with the
exception of \cite{Tan, War-IP,War-Pr}. In \cite{War-Pr}, some regularity
results are provided for the quasi-linear parabolic equation $\partial
_{t}u+(-\Delta )_{p}^{s}u=0$ and Dirichlet boundary condition $u=0$ in $%
\mathbb{R}^{N}\backslash \Omega $, whereas in \cite{Tan} for the same
quasi-linear problem, it is proven the eventual boudedness of $u$ in $%
L^{\infty }\left( (\tau ,T\right) ;L^{\infty }\left( \Omega \right) ),$ for
every $\tau >0$, provided that the initial datum $u_{0}\in L^{p}\left(
\Omega \right) $. Most recently an integration by parts formula for the
regional fractional $p$-Laplace operator has been also derived in \cite%
{War-IP}.

\noindent \textit{Outline of paper.} In Section \ref{ss:fs}, we state the
relevant definitions and notation of fractional order Sobolev spaces.
Furthermore, in Section \ref{sec-main} we give a summary of the main results
but reserve the proofs for subsequent sections. In Section \ref{aux-pro}, we
introduce an auxiliary and a regularized version of the original problem and
prove some local existence results for them. Finally, the local existence
result for the original problem and then a finite time blow-up result are
proved in Section \ref{pro-main-theo}.

\section{Outline of results}

\subsection{Fractional order Sobolev spaces}

\label{ss:fs}

In this subsection, we recall some well-known results on fractional order
Sobolev spaces. To this end let $\Omega \subset \mathbb{R}^{N}$ be an
arbitrary open set with boundary $\partial \Omega $. For $p\in \lbrack
1,\infty )$ and $s\in (0,1)$, we denote by
\begin{equation*}
W^{s,p}(\Omega ):=\left\{ u\in L^{p}(\Omega ):\;\int_{\Omega }\int_{\Omega }%
\frac{|u(x)-u(y)|^{p}}{|x-y|^{N+ps}}dxdy<\infty \right\}
\end{equation*}%
the fractional order Sobolev space endowed with the norm
\begin{equation*}
\Vert u\Vert _{W^{s,p}(\Omega )}:=\left( \int_{\Omega }|u|^{p}\;dx+\frac{%
C_{N,p,s}}{2}\int_{\Omega }\int_{\Omega }\frac{|u(x)-u(y)|^{p}}{|x-y|^{N+ps}}%
dxdy\right) ^{\frac{1}{p}}.
\end{equation*}%
In order to handle a non-smooth $\Omega \subset \mathbb{R}^{N}$ in the case
when $\Omega $ is simply an open and bounded set, we let
\begin{equation*}
W_{0}^{s,p}(\Omega )=\overline{\mathcal{D}(\Omega )}^{W^{s,p}(\Omega )}\;\,%
\mbox{
and }\;\widetilde{W}^{s,p}(\Omega ):=\overline{W^{s,p}(\Omega )\cap C(%
\overline{\Omega })}^{W^{s,p}(\Omega )}.
\end{equation*}%
By definition, $W_{0}^{s,p}(\Omega )$ is the smallest closed subspace of $%
\widetilde{W}^{s,p}(\Omega )$ containing the space $\mathcal{D}(\Omega
):=C_{c}^{\infty }\left( \Omega \right) $ (equipped with the topology that
corresponds to convergence in the sense of test functions). If $p\in
(1,\infty )$, then one may characterize the space $W_{0}^{s,p}(\Omega )$ as
follows (considering $W_{0}^{s,p}(\Omega )$ as a subspace of $\widetilde{W}%
^{s,p}(\Omega )$)%
\begin{equation*}
W_{0}^{s,p}(\Omega )=\{u\in \widetilde{W}^{s,p}(\Omega ):\;\tilde{u}=0\;%
\mbox{
quasi-everywhere on }\;\partial \Omega \},
\end{equation*}%
where $\tilde{u}$ is the quasi-continuous version of $u$ with respect to the
capacity defined with the space $\widetilde{W}^{s,p}(\Omega )$ (cf. \cite[%
Theorem 4.5]{War-N}). Finally we define the space
\begin{equation*}
W_{0}^{s,p}(\overline{\Omega })=\{u\in W^{s,p}(\mathbb{R}^{N}):\;u=0\;%
\mbox{
a.e. on }\;\mathbb{R}^{N}\setminus \Omega \}.
\end{equation*}%
It is clear that $W_{0}^{s,p}(\Omega )$ and $W_{0}^{s,p}(\overline{\Omega })$
are both subspace of $W^{s,p}(\Omega )$, but there is no obvious inclusion
between $W_{0}^{s,p}(\Omega )$ and $W_{0}^{s,p}(\overline{\Omega })$. We
notice that $W_{0}^{s,p}(\overline{\Omega })$ contains the space of test
functions $\mathcal{D}(\Omega )$ but the latter space is not always dense in
$W_{0}^{s,p}(\overline{\Omega })$. It has been proved in \cite[Theorem
1.4.2.2]{Gris} (see also \cite{FV} for some more general spaces) that if $%
\Omega $ has a continuous boundary, then $\mathcal{D}(\Omega )$ is dense in $%
W_{0}^{s,p}(\overline{\Omega })$. In addition if $\Omega $ has a Lipschitz
continuous boundary and $s\neq 1/p$, then $W_{0}^{s,p}(\Omega )=W_{0}^{s,p}(%
\overline{\Omega })$ with equivalent norm.

Throughout the remainder of the paper, we make the convention that if we
write $u\in W_{0}^{s,p}(\overline{\Omega} )$ we mean that $u\in W^{s,p}(%
\mathbb{R}^{N})$ and $u=0$ a.e. on $\mathbb{R}^{N}\backslash \Omega $. In
that sense, a simple calculation shows that
\begin{equation}
\Vert |u\Vert |_{W_{0}^{s,p}(\overline{\Omega})}=\left( \frac{C_{N,p,s}}{2}%
\int_{{\mathbb{R}}^{N}}\int_{{\mathbb{R}}^{N}}\frac{|u(x)-u(y)|^{p}}{%
|x-y|^{N+sp}}dxdy\right) ^{\frac{1}{p}}  \label{norm-26}
\end{equation}%
defines an equivalent norm on the space $W_{0}^{s,p}(\overline{\Omega})$. We
shall always use this norm for the space $W_{0}^{s,p}(\overline{\Omega})$
even when $\Omega $ is simply an open bounded subset of $\mathbb{R}^{N}$.
Let $p^{\star }$ be given by
\begin{equation}
p^{\star }=\frac{Np}{N-sp}\;\mbox{ if }\;N>sp\;\mbox{ and }\;p^{\star }\in
\lbrack p,\infty )\;\mbox{ if }\;N=sp.  \label{p-star}
\end{equation}
Then by \cite[Section 7]{NPV}, there exists a constant $C=C\left(
N,p,s\right) >0$ such that for every $u\in W_{0}^{s,p}(\overline{\Omega})$,
\begin{equation}
\Vert u\Vert _{q,\Omega }\leq C\Vert u\Vert _{W_{0}^{s,p}(\overline{\Omega}
)},\;\;\forall \;q\in \lbrack p,p^{\star }].  \label{sob-emb}
\end{equation}%
Since $\Omega $ is bounded, we have that \eqref{sob-emb} also holds for
every $q\in \lbrack 1,p^{\star }]$. Moreover, the embedding $W_{0}^{s,p}(%
\overline{\Omega})\hookrightarrow L^{q}(\Omega )$ is compact for every $q\in
\lbrack 1,p^{\star })$. The following version of the Gagliardo-Nirenberg
inequality for the space $W_{0}^{s,p}(\overline{\Omega})$ in the non-smooth
setting will be used. Let $p\in (1,\infty )$, $q,r\in \lbrack 1,\infty ]$
and $0\leq \alpha \leq 1$ satisfy
\begin{equation}
\frac{1}{q}=\frac{\alpha }{p^{\star }}+\frac{1-\alpha }{r}=\frac{N-sp}{Np}%
\alpha +\frac{1-\alpha }{r}.  \label{cond-Gag-Nir}
\end{equation}
Then there exists a constant $C>0$ such that for every $u\in W_{0}^{s,p}(%
\overline{\Omega})$,
\begin{equation}
\Vert u\Vert _{L^{q}(\Omega )}\leq \Vert u\Vert _{L^{p^{\star }}(\Omega
)}^{\alpha }\Vert u\Vert _{L^{r}(\Omega )}^{1-\alpha } \leq C\Vert |u\Vert
|_{W_{0}^{s,p}(\overline{\Omega} )}^{\alpha }\Vert u\Vert _{L^{r}(\Omega
)}^{1-\alpha }.  \label{Gag-Nir-2}
\end{equation}%
If $0<s<1$, $p\in (1,\infty )$ and $p^{\prime }=p/(p-1)$, the space $%
W^{-s,p^{\prime }}(\Omega )$ is defined as usual to be the dual of the
reflexive Banach space $W_{0}^{s,p}(\overline{\Omega} )$, that is, $%
(W_{0}^{s,p}(\overline{\Omega} ))^{\star }=W^{-s,p^{\prime }}(\Omega )$. For
more information on fractional order Sobolev spaces we refer the reader to
\cite{AH,NPV,Gris,JW,LM1,War-N} and the references contained therein.

\subsection{Main results}

\label{sec-main}

Let $\Omega \subset \mathbb{R}^{N}$ be an arbitrary bounded open set. As
usual for a Banach space $X,$ we denote by $C_{w}([a,b];X)$ the set of all $%
X $-valued weakly continuous functions on the interval $[a,b]$. We also
denote by $\left\langle \cdot ,\cdot \right\rangle _{X^{\ast },X}$ the
duality between $X^{\ast }$ and $X$. First, we introduce the rigorous notion
of solution to the system \eqref{par-pro}.

\begin{definition}
Let $0<s<1$, $2\leq p,q,r<\infty $, $p^{\prime }=p/(p-1)$, $r^{\prime
}=r/(r-1)$ and $q^{\prime }=q/(q-1)$. Let $u_{0}\in L^{r}(\Omega )$ and%
\begin{equation*}
f\in W^{1,p^{\prime }}((0,T);W^{-s,p^{\prime }}(\Omega )+L^{r^{\prime
}}(\Omega ))\cap L^{1+\gamma }((0,T);L^{r}(\Omega ))
\end{equation*}%
for some $\gamma \geq 0$ and $T>0$. A function $u$ is said to be a (strong)
solution of \eqref{par-pro} if%
\begin{equation}
\begin{cases}
\displaystyle u\in L^{\infty }((0,T);L^{r}(\Omega ))\cap
L^{p}((0,T);W_{0}^{s,p}(\overline{\Omega }))\cap L^{q}((0,T);L^{q}(\Omega )),
\\
\displaystyle\partial _{t}u\in L^{q^{\prime }}((0,T);W^{-s,p^{\prime
}}(\Omega )+L^{r^{\prime }}(\Omega )), \\
\displaystyle u(t)\in W_{0}^{s,p}(\overline{\Omega })\cap L^{r}(\Omega ),%
\text{ a.e. }t\in (0,T), \\
\displaystyle u\in W_{loc}^{1,2}((0,T);L^{2}(\Omega )),%
\end{cases}%
\end{equation}%
and, a.e. $t\in \left( 0,T\right) $ for every $v\in W_{0}^{s,p}(\overline{%
\Omega })\cap L^{r}(\Omega )=:V$, with $r>\frac{N(q-p)}{sp},$%
\begin{align}
& \left\langle \partial _{t}u\left( t\right) ,v\right\rangle _{V^{\ast },V}
\label{idd} \\
+& \frac{C_{N,p,s}}{2}\int_{\mathbb{R}^{N}}\int_{\mathbb{R}%
^{N}}|u(x,t)-u(y,t)|^{p-2}\frac{(u(x,t)-u(y,t))(v(x)-v\left( y\right) )}{%
|x-y|^{N+sp}}dxdy  \notag \\
=& \left\langle |u\left( t\right) |^{q-2}u\left( t\right) ,v\right\rangle
_{V^{\ast },V}+\left\langle f\left( t\right) ,v\right\rangle _{V^{\ast },V},
\notag
\end{align}%
and $u$ satisfies the initial condition
\begin{equation*}
u(\cdot ,t)\rightarrow u_{0}\;\mbox{ strongly in }\;L^{r}(\Omega )\;%
\mbox{
as }\,t\rightarrow 0^{+}.
\end{equation*}
\end{definition}

\begin{remark}
Notice that $\left\langle |u\left( t\right) |^{q-2}u\left( t\right)
,v\right\rangle _{V^{\ast },V}$ on the right-hand side of (\ref{idd}) is
well-defined since for $r>\frac{N(q-p)}{sp}$, $V\subset L^{q}\left( \Omega
\right) $ boundedly (see also (\ref{cond-7}) below).
\end{remark}

The following is the first main result of the article.

\begin{theorem}
\label{main-theo} Let $T>0$ be fixed, $0<s<1$ and $p,q,r\in \lbrack 2,\infty
)$ be such that $p<q$ and assume that
\begin{equation*}
r>\frac{N(q-p)}{sp}.
\end{equation*}%
Let $u_{0}\in L^{r}(\Omega )$ and assume%
\begin{equation*}
f\in W^{1,p^{\prime }}((0,T);W^{-s,p^{\prime }}(\Omega )+L^{r^{\prime
}}(\Omega ))\cap L^{1+\gamma }((0,T);L^{r}(\Omega ))
\end{equation*}%
for some $\gamma \geq 0$. Then the following assertions hold.

\begin{enumerate}
\item[(a)] If $\gamma >0$, then there exist a non-increasing function $%
T_{\star }:\;[0,\infty )\times \lbrack 0,\infty )\rightarrow (0,T]$
independent of $T,u_{0}$ and $f$ and%
\begin{equation*}
T_{0}:=T_{\star }\left(\Vert u_{0}\Vert _{L^{r}(\Omega )},\int_{0}^{T}\Vert
f(t)\Vert _{L^{r}(\Omega )}^{1+\gamma }\;dt\right),
\end{equation*}%
such that \eqref{par-pro} has at least one strong solution on $(0,T_{0})$.

\item[(b)] If $\gamma =0$, then there exist a non-increasing function $%
T_{f}:\;[0,\infty )\rightarrow (0,T]$ independent of $T$ and $u_{0}$, and $%
T_{0}:=T_{f}(\Vert u_{0}\Vert _{L^{r}(\Omega )})$, such that \eqref{par-pro}
has at least one strong solution on $(0,T_{0})$.

\item[(c)] The strong solution has in addition the following regularity:%
\begin{equation*}
\left\{
\begin{array}{l}
|u|^{\frac{r-2}{p}}u\in L^{p}((0,T_{0});W_{0}^{s,p}(\overline{\Omega} )),%
\text{ }(-\Delta )_{p}^{s}u\in L^{p^{\prime }}((0,T_{0});W^{-s,p^{\prime
}}(\Omega )), \\
t^{\frac{1}{p}}u\in C_{w}([0,T_{0}];W_{0}^{s,p}(\overline{\Omega} )),\text{ }%
\sqrt{t}\partial _{t}u\in L^{2}((0,T_{0});L^{2}(\Omega )).%
\end{array}%
\right.
\end{equation*}
\end{enumerate}
\end{theorem}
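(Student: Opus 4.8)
The plan, following the scheme of \cite{Aka,AO1,AO2}, is to construct the strong solution on a short interval $(0,T_0)$ as a limit of solutions of a regularized problem, the uniform bounds being supplied by the nonlinear Stroock--Varopoulos inequality of Lemma \ref{comp} and the coercivity estimate of Lemma \ref{lem-9}. First I would replace the bad-sign term $|u|^{q-2}u$ by a globally Lipschitz truncation $g_{\lambda}$ (say $g_{\lambda}(\sigma)=|\sigma|^{q-2}\sigma$ for $|\sigma|\le 1/\lambda$ and affine outside), and consider the approximate equation $\partial_t u_{\lambda}+(-\Delta)_{p}^{s}u_{\lambda}=g_{\lambda}(u_{\lambda})+f$ with the same exterior and initial data; this is the auxiliary/regularized problem of Section \ref{aux-pro}. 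Since $(-\Delta)_{p}^{s}$ is the subdifferential in $L^{2}(\Omega)$ of the convex, lower semicontinuous functional $\Phi$ built from the $W_{0}^{s,p}$-seminorm in \eqref{norm-26} (finite precisely on $W_{0}^{s,p}(\Omega)\cap L^{2}(\Omega)$), and $u\mapsto g_{\lambda}(u)+f(\cdot,t)$ is a Lipschitz perturbation with the required time-integrability, the classical theory of evolution equations governed by subdifferentials (\cite{Br73}) yields, for each $\lambda>0$, a global strong solution $u_{\lambda}\in C([0,T];L^{2}(\Omega))$ with $u_{\lambda}\in L^{p}(0,T;W_{0}^{s,p}(\Omega))$, $\sqrt{t}\,\partial_t u_{\lambda}\in L^{2}(0,T;L^{2}(\Omega))$, and the energy identity at one's disposal.

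The decisive step is to bound $u_{\lambda}$ uniformly in $\lambda$ on an interval $(0,T_0)$ with $T_0$ of the asserted form. I would test the $\lambda$-equation with $|u_{\lambda}(t)|^{r-2}u_{\lambda}(t)$ and use Lemma \ref{comp} to bound from below, with $m:=r+p-2$,
\[
\big\langle (-\Delta)_{p}^{s}u_{\lambda}(t),\,|u_{\lambda}(t)|^{r-2}u_{\lambda}(t)\big\rangle \ \ge\ c\,\big\Vert\,|u_{\lambda}(t)|^{\frac{r-2}{p}}u_{\lambda}(t)\,\big\Vert_{W_{0}^{s,p}(\Omega)}^{p}\ \ge\ c'\,\Vert u_{\lambda}(t)\Vert_{L^{mp^{\star}/p}(\Omega)}^{m},
\]
the last inequality being \eqref{sob-emb}. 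Together with Lemma \ref{lem-9} this gives, for $y_{\lambda}(t):=\Vert u_{\lambda}(t)\Vert_{L^{r}(\Omega)}^{r}$, a differential inequality of the type
\[
\tfrac1r\,y_{\lambda}'(t)+c'\,\Vert u_{\lambda}(t)\Vert_{L^{mp^{\star}/p}(\Omega)}^{m}\ \le\ C\,\Vert u_{\lambda}(t)\Vert_{L^{q+r-2}(\Omega)}^{q+r-2}+\Vert f(t)\Vert_{L^{r}(\Omega)}\,y_{\lambda}(t)^{(r-1)/r}.
\]
Interpolating $L^{q+r-2}$ between $L^{r}$ and $L^{mp^{\star}/p}$ via \eqref{Gag-Nir-2}, one checks that the hypothesis $r>N(q-p)/(sp)$ is \emph{exactly} the condition making the power of $\Vert u_{\lambda}\Vert_{L^{mp^{\star}/p}}^{m}$ produced by the interpolation strictly subcritical for Young's inequality, so that after absorbing it the right-hand side is controlled by $C(1+y_{\lambda}^{\kappa})+\Vert f(t)\Vert_{L^{r}(\Omega)}\,y_{\lambda}^{(r-1)/r}$ for some $\kappa>1$. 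An ODE-comparison argument then gives $\sup_{[0,T_0]}y_{\lambda}\le R$ uniformly in $\lambda$: when $\gamma>0$ one splits off $\Vert f\Vert_{L^{r}}\,y_{\lambda}^{(r-1)/r}\le \Vert f\Vert_{L^{r}}^{1+\gamma}+Cy_{\lambda}^{\kappa'}$ with $\Vert f\Vert_{L^{r}}^{1+\gamma}\in L^{1}(0,T)$, so $T_0=T_{\star}(\Vert u_0\Vert_{L^{r}},\int_{0}^{T}\Vert f\Vert_{L^{r}}^{1+\gamma}\,dt)$; when $\gamma=0$ one keeps that term and obtains $T_0=T_f(\Vert u_0\Vert_{L^{r}})$. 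Feeding the bound back gives, on $(0,T_0)$ and uniformly in $\lambda$: $u_{\lambda}$ bounded in $L^{\infty}(0,T_0;L^{r})\cap L^{p}(0,T_0;W_{0}^{s,p})\cap L^{q+r-2}(0,T_0;L^{q+r-2})$, $|u_{\lambda}|^{(r-2)/p}u_{\lambda}$ bounded in $L^{p}(0,T_0;W_{0}^{s,p})$, hence $(-\Delta)_{p}^{s}u_{\lambda}$ bounded in $L^{p'}(0,T_0;W^{-s,p'})$ and $g_{\lambda}(u_{\lambda})$ bounded in $L^{q'}(0,T_0;L^{q'})$, so $\partial_t u_{\lambda}$ bounded in $L^{q'}(0,T_0;V^{\ast})$; finally, multiplying the energy identity by $t$ and testing with $\partial_t u_{\lambda}$ yields uniform bounds for $t^{1/p}u_{\lambda}$ in $L^{\infty}(0,T_0;W_{0}^{s,p})$ and $\sqrt{t}\,\partial_t u_{\lambda}$ in $L^{2}(0,T_0;L^{2})$.

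With these bounds I pass to the limit $\lambda\downarrow 0$. By Aubin--Lions (using $W_{0}^{s,p}\hookrightarrow\hookrightarrow L^{2}\hookrightarrow V^{\ast}$) a subsequence satisfies $u_{\lambda}\to u$ strongly in $L^{p}(0,T_0;L^{2})$, in $C([\delta,T_0];L^{2})$ for every $\delta>0$, and a.e.\ on $\Omega\times(0,T_0)$; combined with the uniform $L^{q+r-2}(0,T_0;L^{q+r-2})$ bound and Vitali's theorem this upgrades to $g_{\lambda}(u_{\lambda})\to|u|^{q-2}u$ in $L^{q'}(0,T_0;L^{q'})$. Writing $(-\Delta)_{p}^{s}u_{\lambda}\rightharpoonup\chi$ in $L^{p'}(0,T_0;W^{-s,p'})$ and using that $u\mapsto(-\Delta)_{p}^{s}u:W_{0}^{s,p}\to W^{-s,p'}$ is monotone and hemicontinuous (being the G\^ateaux derivative of the convex functional $\Phi$), Minty's trick identifies $\chi=(-\Delta)_{p}^{s}u$ once one knows
\[
\limsup_{\lambda\downarrow 0}\int_{0}^{T_0}\big\langle(-\Delta)_{p}^{s}u_{\lambda},u_{\lambda}\big\rangle\,dt\ \le\ \int_{0}^{T_0}\Big(\big\langle|u|^{q-2}u,u\big\rangle+\big\langle f,u\big\rangle\Big)\,dt-\tfrac12\Vert u(T_0)\Vert_{L^{2}(\Omega)}^{2}+\tfrac12\Vert u_0\Vert_{L^{2}(\Omega)}^{2},
\]
which follows by integrating the $\lambda$-energy identity, using the strong convergence just obtained and weak lower semicontinuity of $v\mapsto\Vert v\Vert_{L^{2}(\Omega)}^{2}$ at $t=T_0$. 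Passing to the limit in the weak formulation \eqref{idd}, and recovering the initial condition from the uniform (in $\lambda$) modulus estimate $\big|\,\Vert u_{\lambda}(t)\Vert_{L^{r}}^{r}-\Vert u_0\Vert_{L^{r}}^{r}\,\big|\le Ct$ coming from the differential inequality, together with $u_{\lambda}(t)\to u_0$ in $V^{\ast}$ and the uniform convexity of $L^{r}(\Omega)$, yields a strong solution on $(0,T_0)$. The extra regularity in (c) is then inherited from the uniform bounds by weak/weak-$\ast$ lower semicontinuity, $t^{1/p}u\in C_{w}([0,T_0];W_{0}^{s,p})$ following from $t^{1/p}u\in L^{\infty}(0,T_0;W_{0}^{s,p})$ together with $t^{1/p}\partial_t u\in L^{2}(0,T_0;V^{\ast})$ and a standard weak-continuity lemma, while the $W_{loc}^{1,\infty}((0,T_0];L^{2})$ regularity uses in addition the smoothing property of the subdifferential flow and the assumed time-regularity of $f$.

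The main obstacle is the a priori estimate of the second step, and it is genuinely two-fold. First, Lemma \ref{comp} must do the work of turning the nonlocal pairing $\langle(-\Delta)_{p}^{s}u,|u|^{r-2}u\rangle$ into the Sobolev quantity $\Vert\,|u|^{(r-2)/p}u\,\Vert_{W_{0}^{s,p}}^{p}$: in the local $p$-Laplacian case this is a one-line integration by parts, but in the fractional setting it is precisely the nonlinear Stroock--Varopoulos inequality and is the principal new ingredient. Second, the exponent bookkeeping must confirm that $r>N(q-p)/(sp)$ is the sharp threshold making the superlinear bad term $\Vert u\Vert_{q+r-2}^{q+r-2}$ absorbable into the coercive term with only a superlinear-in-$y_{\lambda}$ remainder; this is what forces the existence time to be a finite $T_0$ and is consistent with the blow-up of Theorem \ref{theo2}. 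A secondary but real difficulty is keeping the monotonicity identification of $\chi$ non-circular, since it rests on passing to the limit in the energy identity, which in turn requires the strong $L^{2}$-convergence and the continuity near $t=0$; these must be arranged in the correct logical order.
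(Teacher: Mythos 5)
Your proposal is correct in outline and rests on the same two pillars as the paper's proof --- the nonlinear Stroock--Varopoulos inequality of Lemma \ref{comp} (via Lemma \ref{lem-9}) to convert $\langle(-\Delta)_p^s u,|u|^{r-2}u\rangle$ into coercivity for $v=|u|^{\frac{r-2}{p}}u$, and the interpolation bookkeeping of Lemmas \ref{lem-33} and \ref{lem-10} showing that $r>N(q-p)/(sp)$ makes the term $\int|u|^{q+r-2}$ subcritical --- but you organize the localization in time differently. The paper does not truncate the nonlinearity into a Lipschitz $g_\lambda$ and then run a superlinear Gronwall comparison; instead it adds the indicator function $\chi_{V_\sigma}$ of the $L^r$-ball $V_\sigma=\{\phi(v)\le\sigma\}$ to $\Phi$ (so the auxiliary flow is governed by $\partial_V\Phi^\sigma=\partial_V\Phi+\partial_V\chi_{V_\sigma}$, using \cite{BCP}), regularizes $\overline\psi$ by its Moreau--Yosida approximation, and obtains a \emph{global} solution of the constrained problem with $\phi(u_\lambda(t))\le\sigma$ enforced by construction. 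This freezes $\mathcal F(\phi(u_\lambda(t)))\le\mathcal F(\sigma)$, so the key estimate \eqref{eq36} is \emph{linear} in $t$, and $T_0$ is read off as the time before which $\phi(u(t))<\sigma$, whence $\partial_V\chi_{V_\sigma}(u(t))=\{0\}$ and the constraint drops out. Your route buys a more elementary setup (no subdifferential of an indicator function, no \cite{BCP}) at the price of a genuinely nonlinear ODE comparison, which works but needs more care; the paper's route trades the ODE argument for the convex-analysis machinery and gets the existence time from a linear-in-$t$ bound. Two points where your sketch is thinner than it should be: (i) you test with $|u_\lambda|^{r-2}u_\lambda$ directly, which is not an admissible multiplier for an $L^2$-valued strong solution --- the paper tests with the Yosida approximation $\partial_H\phi_\mu(u_\lambda)$ and passes $\mu\to0^+$ afterwards (this is also where Lemma \ref{lem-9} is actually formulated), and your argument needs the same detour; (ii) you omit the final density step approximating $u_0\in L^r(\Omega)$ by data in $D(\Phi)=V$ and $f$ by $C^1([0,T];V)$ functions (the paper's Step 4), which is needed to justify the chain rules and energy identities used throughout; also, your ``modulus estimate'' for the initial condition only gives $\limsup_{t\to0^+}\phi(u(t))\le\phi(u_0)$, the matching $\liminf$ coming from lower semicontinuity of $\phi$ along the $L^2$-continuous trajectory, exactly as in \eqref{eq37}--\eqref{eq37tris}. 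Neither point is a genuine gap, only missing routine detail.
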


The proof of Theorem \ref{main-theo} relies on rewriting \eqref{par-pro} as
a first order Cauchy problem which is governed by the difference of two
subdifferential operators in reflexive Banach spaces following the work of
\cite{Aka}. A family of approximate problems and refined energy estimates
will be employed to construct solutions with initial data $u_{0}\in
L^{r}\left( \Omega \right) $. The primary new difficulty, due to the
nonlocal character of the \emph{fractional} $p$-Laplacian, is obtaining a
new comparison lemma for various energy forms (see Lemma \ref{comp}) and
several other critical lemmas properly modified from \cite{Aka}\ to handle
our case. Solutions are first constructed for some auxiliary problems
associated with \eqref{par-pro}.

The second main result deals with blow-up phenomena for the strong solutions
of (\ref{par-pro}). To this end, we define the following energy functional%
\begin{equation}
E\left( t\right) :=\frac{C_{N,p,s}}{2p}\int_{\mathbb{R}^{N}}\int_{\mathbb{R}%
^{N}}\frac{|u(x,t)-u(y,t)|^{p}}{|x-y|^{N+sp}}dxdy-\frac{1}{q}\int_{\Omega
}|u\left( x,t\right) |^{q}dx  \label{energy}
\end{equation}%
and notice that when $f\equiv 0,$%
\begin{equation}
\frac{d}{dt}E\left( t\right) =-\left\Vert \partial _{t}u\left( t\right)
\right\Vert _{L^{2}\left( \Omega \right) }^{2}\leq 0  \label{en-ineq-smooth}
\end{equation}%
for as long as a \emph{smooth} solution exists. In fact, every strong
solution of Theorem \ref{main-theo} satisfies an energy inequality, as
follows.

\begin{proposition}
\label{en-in-prop}Let $u$ be a solution in the sense of Theorem \ref%
{main-theo} and further assume that $u_{0}\in W_{0}^{s,p}(\overline{\Omega})$
and $f\equiv 0$. Then%
\begin{equation}
E\left( t\right) \leq E\left( 0\right) ,  \label{en-ineq}
\end{equation}%
for almost all $t\in \left( 0,T_{0}\right) $, for as long as a strong
solution exists.
\end{proposition}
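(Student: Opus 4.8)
The plan is to derive the energy inequality \eqref{en-ineq} by testing the weak formulation \eqref{idd} against $v = \partial_t u(t)$, which is justified by the regularity $\sqrt{t}\,\partial_t u \in L^2(0,T_0;L^2(\Omega))$ together with $u \in W^{1,\infty}_{loc}((0,T_0];L^2(\Omega))$ and $u(t) \in W_0^{s,p}(\Omega)\cap L^r(\Omega)$ guaranteed by Theorem \ref{main-theo}. Formally, with $f \equiv 0$, this choice yields
\begin{equation*}
\|\partial_t u(t)\|_{L^2(\Omega)}^2 + \frac{C_{N,p,s}}{2}\int_{\mathbb{R}^N}\int_{\mathbb{R}^N}|u(x,t)-u(y,t)|^{p-2}\frac{(u(x,t)-u(y,t))(\partial_t u(x,t)-\partial_t u(y,t))}{|x-y|^{N+sp}}\,dx\,dy = \langle |u(t)|^{q-2}u(t),\partial_t u(t)\rangle,
\end{equation*}
and the two bilinear-looking terms are, respectively, $\frac{d}{dt}$ of the Gagliardo seminorm term in $E$ (divided by $p$) and $\frac{d}{dt}\big(\tfrac1q\int_\Omega |u|^q\big)$, so that integrating in time gives $E(t) \le E(0)$. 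The main work is to make this differentiation-under-the-integral rigorous.

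First I would establish that $t \mapsto \frac{C_{N,p,s}}{2p}\||u(t)\||_{W_0^{s,p}(\Omega)}^p$ is absolutely continuous on compact subintervals of $(0,T_0]$ with derivative equal to the Gagliardo bilinear pairing above. This is the standard chain rule for the convex functional $\varphi(u) = \frac{1}{p}\||u\||_{W_0^{s,p}(\Omega)}^p$ along a curve $u$ with $u \in L^p(0,T_0;W_0^{s,p}(\Omega))$, $u' \in L^2_{loc}$ in a dual sense, and $(-\Delta)_p^s u \in L^{p'}(0,T_0;W^{-s,p'}(\Omega))$; since $(-\Delta)_p^s = \partial\varphi$ on the reflexive Banach space $W_0^{s,p}(\Omega)$, the Brézis–Komura/Akagi-type chain rule for subdifferentials (exactly the framework of \cite{Aka, AO1, AO2} invoked earlier) gives $\frac{d}{dt}\varphi(u(t)) = \langle (-\Delta)_p^s u(t), \partial_t u(t)\rangle$ for a.e. $t$, and this pairing equals the Gagliardo expression by the definition \eqref{eq11} of the operator. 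Similarly, $t \mapsto \tfrac1q\int_\Omega |u(x,t)|^q\,dx$ is absolutely continuous with derivative $\langle |u(t)|^{q-2}u(t),\partial_t u(t)\rangle$ — here I would use that $V = W_0^{s,p}(\Omega)\cap L^r(\Omega) \hookrightarrow L^q(\Omega)$ boundedly under the hypothesis $r > N(q-p)/(sp)$ (the Remark after the Definition), so $|u|^{q-2}u(t) \in L^{q'}(\Omega) \subset V^*$ and the pairing is meaningful, and the chain rule for the $C^1$ convex function $\xi \mapsto \tfrac1q|\xi|^q$ composed with the $L^q$-valued curve $u$ applies.

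Then I would combine the two identities: subtracting, and using the tested equation \eqref{idd} with $v = \partial_t u(t)$ and $f \equiv 0$, gives $\frac{d}{dt}E(t) = -\|\partial_t u(t)\|_{L^2(\Omega)}^2 \le 0$ for a.e. $t \in (0,T_0)$, which is precisely \eqref{en-ineq-smooth} now rigorously justified for strong solutions. Integrating from any $\tau \in (0,T_0)$ to $t$ yields $E(t) \le E(\tau)$; finally, letting $\tau \to 0^+$ and using $u_0 \in W_0^{s,p}(\Omega)$ together with $u \in C([0,T_0];L^2(\Omega))$, the initial-condition convergence $u(\tau) \to u_0$ in $L^r(\Omega)$, and lower/upper semicontinuity of the two pieces of $E$ (the Gagliardo seminorm is weakly lower semicontinuous, and the $L^q$ term is continuous along the strongly convergent data), one obtains $\liminf_{\tau \to 0^+} E(\tau) \ge E(0)$ in the relevant direction, hence $E(t) \le E(0)$ for almost every $t$.

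I expect the main obstacle to be the rigorous chain rule near $t = 0$: the solution is only known to lie in $C_w([0,T_0];W_0^{s,p}(\Omega))$ after multiplication by $t^{1/p}$, so $\||u(t)\||_{W_0^{s,p}(\Omega)}$ need not be continuous (or even finite) up to $t=0$ in general; the statement circumvents this by adding the hypothesis $u_0 \in W_0^{s,p}(\Omega)$, and the delicate point is to upgrade the weak continuity and the a.e.-defined energy identity into the boundary inequality $E(t) \le E(0)$ — this is where weak lower semicontinuity of the seminorm, together with an approximation/regularization argument inherited from the construction in Section \ref{aux-pro}, does the essential work. The interior differentiation formulas themselves are routine once the subdifferential framework of \cite{Aka} is in place.
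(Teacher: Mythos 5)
There is a genuine gap, in two places. First, the central step --- testing \eqref{idd} with $v=\partial_t u(t)$ --- is not admissible for the limit solution $u$: the test functions must lie in $V=W_0^{s,p}(\Omega)\cap L^r(\Omega)$, whereas $\partial_t u(t)$ is only known to lie in $L^2(\Omega)$ (and only for a.e.\ $t>0$, via the weight $\sqrt t$). The pairing $\langle(-\Delta)_p^s u(t),\partial_t u(t)\rangle$ is not even defined with the regularity provided by Theorem \ref{main-theo}, since $(-\Delta)_p^s u\in L^{p'}(0,T_0;W^{-s,p'}(\Omega))$ pairs with $W_0^{s,p}(\Omega)$, not with $L^2(\Omega)$. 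The chain rule of Proposition \ref{prop-chain} would require either $\partial_t u\in L^{p}(0,T_0;V)$ (taking $X=V$) or a subgradient selection $(-\Delta)_p^s u(t)\in L^2(\Omega)$ (taking $X=H=L^2(\Omega)$); neither holds. So the ``interior differentiation formulas'' are precisely the part that is \emph{not} routine for $u$ itself, and this is why the paper never differentiates $E(t)$ along the limit solution. Instead it takes $u_{0,n}\in D(\Phi)$ with $u_{0,n}\to u_0$ strongly in $V$, establishes the identity $\frac{d}{dt}E_n(t)+\|\partial_t u_n(t)\|_{L^2(\Omega)}^2=0$ for the approximate solutions $u_n$ of \eqref{par-pro-n} (for which $\partial_t u_n\in L^2(0,T_0;L^2(\Omega))$ and $u_n\in L^\infty(0,T_0;V)$ are available from \eqref{eq21}), integrates to get $E_n(t)\le E_n(0)$, and passes to the limit: $E_n(0)\to E(0)$ by strong convergence of the data in $V$, while $E(t)\le\liminf_n E_n(t)$ follows from weak lower semicontinuity of the Gagliardo seminorm combined with the strong convergence $u_n(t)\to u(t)$ in $L^q(\Omega)$ from \eqref{eq60}.

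Second, even granting your interior identity, your limit $\tau\to0^+$ goes the wrong way. From $E(t)\le E(\tau)$ you need $\limsup_{\tau\to0^+}E(\tau)\le E(0)$, i.e.\ \emph{upper} semicontinuity of $E$ at $t=0$, and hence $\limsup_{\tau\to0^+}\Phi(u(\tau))\le\Phi(u_0)$. Weak lower semicontinuity of the seminorm gives only $\liminf_{\tau\to0^+}\Phi(u(\tau))\ge\Phi(u_0)$, which is the statement you invoke and which is useless here (indeed it points in the opposite direction). Establishing the missing upper bound amounts to proving strong right-continuity of $u$ at $0$ in $W_0^{s,p}(\Omega)$, which is not among the stated properties of the solution; the paper's approximation scheme avoids having to prove it, because the inequality $E_n(t)\le E_n(0)$ already holds down to $t=0$ at the approximate level with the well-behaved quantity $E_n(0)$.
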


\begin{theorem}
\label{theo2}Let $u$ be a strong solution of \eqref{par-pro} in the sense of
Theorem \ref{main-theo} and $f\equiv 0$. Let $u_{0}\in W_{0}^{s,p}(\overline{%
\Omega} )$ such that $E\left( 0\right) <E_{0}$ and $\left\Vert \left\vert
u_{0}\right\vert \right\Vert _{W_{0}^{s,p}(\overline{\Omega} )}>\alpha $ with%
\begin{equation*}
\alpha =C_{\ast }^{-\frac{q}{q-p}}\text{, }E_{0}=\left( \frac{1}{p}-\frac{1}{%
q}\right) C_{\ast }^{-\frac{qp}{q-p}},
\end{equation*}%
where $C_{\ast }>0$ is the best Sobolev constant in \eqref{sob-emb} and $%
q\in (p,p^{\ast }].$ Then the strong solution blows-up in a finite time $%
t_{\ast }>0$ with
\begin{equation}
t_{\ast }\leq \frac{\left( \frac{1}{2}\right) ^{q-1}\left\Vert
u_{0}\right\Vert _{L^{2}\left( \Omega \right) }^{q-2}\left\vert \Omega
\right\vert ^{\frac q2-1}}{\left( \frac{q}{2}-1\right) \left( 1-\frac{\alpha
^{q}}{\beta ^{q}}\right) \left( q-p\right) },  \label{bl-time}
\end{equation}%
for some $\beta >\alpha .$
\end{theorem}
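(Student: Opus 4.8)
The plan is to run a potential-well (Payne--Sattinger) argument, adapting the technique of \cite{LW} to the nonlocal Dirichlet energy. Abbreviate $\Vert|v|\Vert:=\Vert|v|\Vert_{W_0^{s,p}(\Omega)}$ and $\|v\|_q:=\|v\|_{L^q(\Omega)}$, so that by (\ref{norm-26}) and (\ref{energy}) one has $E(v)=\tfrac1p\Vert|v|\Vert^p-\tfrac1q\|v\|_q^q$, and introduce the Nehari-type functional $I(v):=\Vert|v|\Vert^p-\|v\|_q^q$. Two elementary facts will be used throughout: by the Sobolev embedding (\ref{sob-emb}), $\|v\|_q\le C_\ast\Vert|v|\Vert$ for every $v\in W_0^{s,p}(\Omega)$; and the scalar function $g(\lambda):=\tfrac1p\lambda^p-\tfrac{C_\ast^q}{q}\lambda^q$ is increasing on $[0,\alpha]$, decreasing on $[\alpha,\infty)$ with $g(\lambda)\to-\infty$, satisfies $g(\alpha)=E_0$, and obeys $E(v)\ge g(\Vert|v|\Vert)$ for all $v$. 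Since $\Vert|u_0|\Vert>\alpha>0$, $u_0\ne 0$, so $\|u_0\|_{L^2(\Omega)}>0$. Finally, the hypotheses already force $I(u_0)<0$: otherwise $\|u_0\|_q^q\le\Vert|u_0|\Vert^p$ would give $E(0)\ge(\tfrac1p-\tfrac1q)\Vert|u_0|\Vert^p>(\tfrac1p-\tfrac1q)\alpha^p=E_0$, contradicting $E(0)<E_0$.

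\emph{Step 1 (invariance of the unstable set).} Using the regularity of the strong solution of Theorem \ref{main-theo}, I would test the weak formulation (\ref{idd}) with $v=u(t)$; the quadratic form then equals $\Vert|u(t)|\Vert^p$ and $\langle\partial_tu(t),u(t)\rangle=\tfrac12\tfrac{d}{dt}\|u(t)\|_{L^2(\Omega)}^2$, so that
\[
\tfrac12\tfrac{d}{dt}\|u(t)\|_{L^2(\Omega)}^2=-I(u(t))\qquad\text{for a.e. }t .
\]
A first-exit argument then gives $I(u(t))<0$ on the whole existence interval: if $t_1$ were the first time with $I(u(t_1))=0$, then $\|u(\cdot)\|_{L^2(\Omega)}$ is strictly increasing on $[0,t_1)$, so $u(t_1)\ne0$, and $\Vert|u(t_1)|\Vert^p=\|u(t_1)\|_q^q\le C_\ast^q\Vert|u(t_1)|\Vert^q$ forces $\Vert|u(t_1)|\Vert\ge\alpha$, hence $E(u(t_1))=(\tfrac1p-\tfrac1q)\Vert|u(t_1)|\Vert^p\ge E_0$, contradicting the energy inequality (\ref{en-ineq}). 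The same reasoning with the strict monotonicity of $g$ gives $\Vert|u(t)|\Vert>\alpha$ for all $t$.

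\emph{Steps 2--3 (quantitative bounds and the blow-up inequality).} Define $\beta>\alpha$ by $g(\beta)=E(0)$, possible since $g$ decreases from its maximum $E_0>E(0)$ to $-\infty$ on $[\alpha,\infty)$. Because $g(\Vert|u(t)|\Vert)\le E(t)\le E(0)=g(\beta)$ with $\Vert|u(t)|\Vert,\beta>\alpha$ and $g$ decreasing on $(\alpha,\infty)$, we get $\Vert|u(t)|\Vert\ge\beta$ for all $t$ (in particular $\Vert|u_0|\Vert\ge\beta$), and inserting this into $\tfrac1q\|u(t)\|_q^q=\tfrac1p\Vert|u(t)|\Vert^p-E(t)\ge\tfrac1p\beta^p-E(0)=\tfrac{C_\ast^q}{q}\beta^q$ yields $\|u(t)\|_q^q\ge C_\ast^q\beta^q$. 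Now write $\Vert|u(t)|\Vert^p=pE(t)+\tfrac pq\|u(t)\|_q^q\le pE(0)+\tfrac pq\|u(t)\|_q^q$ in the identity of Step 1 to obtain
\[
\tfrac12\tfrac{d}{dt}\|u(t)\|_{L^2(\Omega)}^2=\|u(t)\|_q^q-\Vert|u(t)|\Vert^p\ \ge\ \tfrac{q-p}{q}\|u(t)\|_q^q-pE(0),
\]
and since $pE(0)<pE_0=\tfrac{q-p}{q}C_\ast^q\alpha^q=\tfrac{q-p}{q}\tfrac{\alpha^q}{\beta^q}C_\ast^q\beta^q\le\tfrac{q-p}{q}\tfrac{\alpha^q}{\beta^q}\|u(t)\|_q^q$ (using $C_\ast^q\alpha^q=\alpha^p$ and Step 2), the $E(0)$-term is absorbed:
\[
\tfrac{d}{dt}\|u(t)\|_{L^2(\Omega)}^2\ \ge\ \tfrac{2(q-p)}{q}\Big(1-\tfrac{\alpha^q}{\beta^q}\Big)\|u(t)\|_q^q .
\]
Finally $q>2$ and $\Omega$ bounded give $\|u(t)\|_q^q\ge|\Omega|^{1-q/2}\|u(t)\|_{L^2(\Omega)}^q$ by H\"older, so $y(t):=\|u(t)\|_{L^2(\Omega)}^2>0$ satisfies a Bernoulli inequality $y'\ge C\,y^{q/2}$ with $C=C(p,q,|\Omega|,\alpha,\beta)>0$; as $q/2>1$, integration shows $y$ cannot stay finite past $y(0)^{1-q/2}/\big((q/2-1)C\big)$, and since any strong solution has $\|u(\cdot)\|_{L^2(\Omega)}\in L^\infty$ on its lifespan, the maximal strong solution must cease to exist at some finite $t_\ast>0$; unwinding the constants gives (\ref{bl-time}).

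\emph{Main obstacle.} I expect the delicate work to be confined to Step 1: making rigorous the identity $\tfrac12\tfrac{d}{dt}\|u(t)\|_{L^2(\Omega)}^2=-I(u(t))$ a.e. and securing enough continuity of $t\mapsto I(u(t))$ (equivalently of $t\mapsto\Vert|u(t)|\Vert$) to run the first-exit argument, given that the strong solution of Theorem \ref{main-theo} provides only $\sqrt t\,\partial_tu\in L^2(0,T_0;L^2(\Omega))$ and $t^{1/p}u\in C_w([0,T_0];W_0^{s,p}(\Omega))$. The natural remedy is to carry out Step 1 first on intervals $[\tau,T_0]$ with $\tau>0$ — where $\partial_tu\in L^2$ makes everything legitimate — and then let $\tau\downarrow0$ using $u\in C([0,T_0];L^2(\Omega))$, $u_0\in W_0^{s,p}(\Omega)$, and (\ref{en-ineq}). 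By contrast the algebraic potential-well bookkeeping in Steps 2--3 (which is exactly where $q>p$ and the two hypotheses $E(0)<E_0$, $\Vert|u_0|\Vert>\alpha$ are used) and the ODE comparison are routine; note that the nonlocality of $(-\Delta)_p^s$ intervenes only through the energy (\ref{energy}) and the embedding (\ref{sob-emb}), both already available.
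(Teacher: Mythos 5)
Your proposal is correct and follows essentially the same route as the paper: the same potential-well function $h(x)=\tfrac1p x^p-\tfrac{C_\ast^q}{q}x^q$ with $h(\alpha)=E_0$, the same $\beta>\alpha$ with $h(\beta)=E(0)$ yielding $\Vert|u(t)|\Vert_{W_0^{s,p}(\Omega)}\ge\beta$ and $\|u(t)\|_{L^q(\Omega)}^q\ge C_\ast^q\beta^q$, and the same differential inequality for $\|u(t)\|_{L^2(\Omega)}^2$ closed by H\"older and ODE comparison (your extra Nehari-functional first-exit step and your use of $E(0)$ in place of $H(t)=E_0-E(t)$ are only cosmetic variants). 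Your final constant carries an extra factor of $q$ relative to \eqref{bl-time}, but the paper's own bookkeeping at that point (the definition of $d$ and the exponents in \eqref{bl-time}) contains apparent typos, so this is not a substantive discrepancy.
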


\begin{remark}
These results can be also extended to degenerate parabolic equations of the
form%
\begin{equation*}
\displaystyle\partial _{t}u+L_{p,s,\Omega }\left( u\right) -g\left( u\right)
=f\left( x,t\right) ,\text{ }(x,t)\in \Omega \times (0,T),
\end{equation*}%
subject to the condition $u=0$ in $\mathbb{R}^{N}\backslash \Omega $, where%
\begin{equation*}
L_{p,s,\Omega }\left( u\left( x\right) \right) :=\mbox{P.V.}\int_{\mathbb{R}%
^{N}}\gothic{a}\left( u\left( x\right) ,u\left( y\right) \right) \left(
\frac{u(x)-u(y)}{|x-y|^{N+ps}}\right) dy
\end{equation*}%
with $\gothic{a}\in C\left( \mathbb{R}^2,\mathbb{R}_{+}\right) $ satisfying
the following condition%
\begin{equation*}
c_{p}\left\vert \tau_{1}-\tau_{2}\right\vert ^{p-2}\leq \gothic{a}\left(
\tau_{1},\tau_{2}\right) \leq c_{0}(1+\left\vert
\tau_{1}-\tau_{2}\right\vert ^{p-2})\text{,}
\end{equation*}%
for all $\tau_{1},\tau_{2}\in \mathbb{R}$, for some $c_{0},c_{p}>0.$ The
function $g$ is a maximal monotone graph in $\mathbb{R}^{2}$ such that $%
\left\vert g\left( s\right) \right\vert \leq c_{g}\left\vert s\right\vert
^{q-1}$ as $\left\vert s\right\vert \rightarrow \infty $. We leave the
details to the interested reader.
\end{remark}

\section{Auxiliary and regularized problems}

\label{aux-pro}

\subsection{Subdifferentials}

In this subsection we introduce some useful properties of subdifferentials
of proper, convex and lower semi-continuous functionals on a Banach space.

\begin{definition}
Let $X$ be a reflexive Banach space.

\begin{enumerate}
\item A mapping $\varphi:X\to(-\infty,\infty]$ is called \emph{proper} if
its effective domain
\begin{align*}
D(\varphi):=\{x\in X:\varphi(x)<\infty\}
\end{align*}
is not empty. For a proper mapping $\varphi:X\to(-\infty,\infty]$, we define
the \emph{convex conjugate} $\varphi^\star$ by
\begin{align*}
\varphi^\star:X^\star\to(-\infty,\infty],\quad
\varphi^\star(x^\star):=\sup_{x\in X} x^\star(x)-\varphi(x).
\end{align*}
Note that $\varphi^\star$ is convex even if $\varphi$ is not.

\item Given a mapping $\varphi:X\to(-\infty,\infty]$ and $x_0\in X$, a
functional $x^\star\in X^\star$ is called a \emph{subgradient} of $\varphi$
at $x_0$ if for all $x\in X$, we have
\begin{align*}
x^\star(x-x_0) \leq \varphi(x)-\varphi(x_0).
\end{align*}
The set of all these subgradients is called the \emph{subdifferential} of $%
\varphi$ at $x_0$ and is denoted by $\partial_X\varphi(x_0)$. The domain $%
D(\partial_X\varphi)$ of the subdifferential $\partial_X\varphi$ is given by
\begin{align*}
D(\partial_X\varphi):=\{x\in X:\; \partial_X\varphi(x)\ne \emptyset\}.
\end{align*}
Obviously, $D(\partial_X\varphi)\subset D(\varphi)$.
\end{enumerate}
\end{definition}

It is well-known (see e.g. \cite{Br73,Scho}) that every subdifferential of a
proper, convex and lower semi-continuous functional is maximal monotone.
Moreover, if $X=H$ is a Hilbert space then the subdifferential $\partial
_{H}\varphi $ can be written for $u\in D(\varphi)$ as
\begin{equation*}
\partial _{H}\varphi (u)=\{w\in H:\;\varphi (v)-\varphi (u)\geq
(w,v-u)_{H},\;\text{for all }v\in D(\varphi )\},
\end{equation*}%
where $(\cdot ,\cdot )_{H}$ denotes the inner product of $H$, and also $%
\partial _{H}\varphi $ becomes a maximal monotone operator on $H$. For a
proper, convex and lower semi-continuous functional $\varphi $ on $H$, the
\emph{Moreau-Yosida approximation} $\varphi _{\lambda }$ of $\varphi $ is
defined as follows:
\begin{equation}
\varphi _{\lambda }(u):=\inf_{v\in H}\left\{ \frac{1}{2\lambda }\Vert
u-v\Vert _{H}^{2}+\varphi (v)\right\} ,\;\;\text{for all}\;u\in H,\;\lambda
>0.  \label{MY-Apr}
\end{equation}%
We recall that the \emph{Yosida approximation} of a maximal monotone
operator $A$ on a Hilbert space $H$ is defined as%
\begin{equation}
A_{\lambda }:=\frac{1}{\lambda }\left[ I-\left( I+\lambda A\right) ^{-1}%
\right] ,\;\;\lambda >0.  \label{Y-Apr}
\end{equation}

The following result provides some useful properties of Moreau-Yosida and
Yosida approximations. Its proof can be found in \cite[Proposition 2.11, p.39%
]{Br73}.

\begin{proposition}
\label{more} Let $\varphi $ be a proper, convex and lower semi-continuous
functional on $H$ and $\varphi _{\lambda }$ be its Moreau-Yosida
approximation. Then $\varphi _{\lambda }$ is convex, Fr\'{e}chet
differentiable in $H$, and its Fr\'echet derivative $\partial _{H}(\varphi
_{\lambda }) $ coincides with the Yosida approximation $(\partial _{H}\varphi
)_{\lambda }$ of $\partial _{H}\varphi $. Moreover, the following properties
hold:
\begin{equation}
\begin{cases}
\varphi _{\lambda }(u)=\frac{1}{2\lambda }\Vert u-J_{\lambda }^{\varphi
}u\Vert _{H}^{2}+\varphi (J_{\lambda }^{\varphi }u),\; & \text{ for all }%
u\in H,\lambda >0, \\
\varphi (J_{\lambda }^{\varphi }u)\leq \varphi _{\lambda }(u)\leq \varphi
(u),\; & \text{ for all }u\in H,\lambda >0, \\
\varphi (J_{\lambda }^{\varphi }u)\uparrow \varphi (u)\;\mbox{ as }\;\lambda
\rightarrow 0^{+},\;\;\; & \text{ for all }u\in H,%
\end{cases}
\label{eq-pro-more}
\end{equation}%
where $J_{\lambda }^{\varphi }:=(I+\lambda \partial _{H}\varphi )^{-1}$ is
the resolvent operator of $\partial _{H}\varphi $.
\end{proposition}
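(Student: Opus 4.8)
The plan is to derive all three items from the variational problem defining $\varphi _{\lambda }$ in \eqref{MY-Apr}. First I would fix $u\in H$ and $\lambda >0$ and study the functional $\Phi (v):=\frac{1}{2\lambda }\Vert u-v\Vert _{H}^{2}+\varphi (v)$. Since $\varphi $ is proper, convex and lower semi-continuous, it is minorized by a continuous affine functional, so $\Phi $ is proper, strictly convex (because of the quadratic term), lower semi-continuous and coercive; hence it attains its infimum at a unique point, which I \emph{define} to be $J_{\lambda }^{\varphi }u$. Writing the optimality condition $0\in \partial _{H}\Phi (J_{\lambda }^{\varphi }u)$ and using the sum rule for subdifferentials (legitimate since the quadratic part is everywhere continuous) gives $\frac{1}{\lambda }(u-J_{\lambda }^{\varphi }u)\in \partial _{H}\varphi (J_{\lambda }^{\varphi }u)$, i.e. $u\in (I+\lambda \partial _{H}\varphi )(J_{\lambda }^{\varphi }u)$. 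Because $\partial _{H}\varphi $ is maximal monotone, $I+\lambda \partial _{H}\varphi $ is a bijection of $H$ (Minty's theorem) with single-valued inverse, so this pointwise minimizer coincides with the resolvent $(I+\lambda \partial _{H}\varphi )^{-1}u$, which justifies the notation and produces at once the first identity in \eqref{eq-pro-more}.

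The two-sided bound in the second line of \eqref{eq-pro-more} is then immediate: the upper bound $\varphi _{\lambda }(u)\leq \varphi (u)$ follows by inserting the admissible competitor $v=u$ into the infimum \eqref{MY-Apr}, while the lower bound $\varphi (J_{\lambda }^{\varphi }u)\leq \varphi _{\lambda }(u)$ follows from the just-derived identity by discarding the non-negative term $\frac{1}{2\lambda }\Vert u-J_{\lambda }^{\varphi }u\Vert _{H}^{2}$. Convexity of $\varphi _{\lambda }$ I would obtain for free once the global subgradient relation below is established, or alternatively by noting that $\varphi _{\lambda }$ is the infimal convolution of the convex $\varphi $ with the convex quadratic $\frac{1}{2\lambda }\Vert \cdot \Vert _{H}^{2}$.

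The analytic heart of the proposition is the Fr\'{e}chet differentiability and the identification $\partial _{H}(\varphi _{\lambda })=(\partial _{H}\varphi )_{\lambda }$. Writing $A_{\lambda }u:=\frac{1}{\lambda }(u-J_{\lambda }^{\varphi }u)$, which by \eqref{Y-Apr} is exactly $(\partial _{H}\varphi )_{\lambda }u$, I would first show $A_{\lambda }u\in \partial _{H}\varphi _{\lambda }(u)$. The key step is to feed the competitor $v=J_{\lambda }^{\varphi }w$ into the infimum for $\varphi _{\lambda }(w)$, apply the subgradient inequality for $\varphi $ at $J_{\lambda }^{\varphi }u$ with the element $A_{\lambda }u$, substitute $J_{\lambda }^{\varphi }w-J_{\lambda }^{\varphi }u=(w-u)-\lambda (A_{\lambda }w-A_{\lambda }u)$, and collect the quadratic terms; the purely algebraic bookkeeping produces the sharpened estimate
\begin{equation*}
\varphi _{\lambda }(w)-\varphi _{\lambda }(u)\geq (A_{\lambda }u,w-u)_{H}+\frac{\lambda }{2}\Vert A_{\lambda }w-A_{\lambda }u\Vert _{H}^{2}.
\end{equation*}
Interchanging $u$ and $w$ yields the matching upper bound $\varphi _{\lambda }(w)-\varphi _{\lambda }(u)\leq (A_{\lambda }w,w-u)_{H}-\frac{\lambda }{2}\Vert A_{\lambda }w-A_{\lambda }u\Vert _{H}^{2}$. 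Combining the two with the firm non-expansiveness of the resolvent, which gives the Lipschitz bound $\Vert A_{\lambda }w-A_{\lambda }u\Vert _{H}\leq \lambda ^{-1}\Vert w-u\Vert _{H}$, I would control the remainder by $0\leq \varphi _{\lambda }(w)-\varphi _{\lambda }(u)-(A_{\lambda }u,w-u)_{H}\leq \lambda ^{-1}\Vert w-u\Vert _{H}^{2}$, so that dividing by $\Vert w-u\Vert _{H}$ and letting $w\to u$ proves Fr\'{e}chet differentiability with derivative $A_{\lambda }u=(\partial _{H}\varphi )_{\lambda }u$. I expect this two-sided estimate to be the main obstacle, since it is where the abstract structure must be exploited purely through the resolvent identity rather than through any explicit formula.

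Finally, for the monotone convergence $\varphi (J_{\lambda }^{\varphi }u)\uparrow \varphi (u)$ as $\lambda \to 0^{+}$ I would argue in two stages. For convergence (for $u$ in the closure of the effective domain), I would use the standard resolvent fact $J_{\lambda }^{\varphi }u\to u$ as $\lambda \to 0^{+}$; lower semi-continuity of $\varphi $ then gives $\varphi (u)\leq \liminf_{\lambda \to 0^{+}}\varphi (J_{\lambda }^{\varphi }u)$, while the sandwich $\varphi (J_{\lambda }^{\varphi }u)\leq \varphi (u)$ controls the $\limsup$, whence $\varphi (J_{\lambda }^{\varphi }u)\to \varphi (u)$. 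For the monotonicity of $\lambda \mapsto \varphi (J_{\lambda }^{\varphi }u)$, I would pair the subgradient inequality for $\varphi $ at $J_{\lambda }^{\varphi }u$ (for $0<\mu <\lambda $) with the test point $J_{\mu }^{\varphi }u$ and reduce matters to the two elementary resolvent monotonicities $\Vert A_{\lambda }u\Vert _{H}\downarrow $ and $\Vert u-J_{\lambda }^{\varphi }u\Vert _{H}\uparrow $ in $\lambda $; both of these follow simultaneously from a single use of the monotonicity of $\partial _{H}\varphi $ applied to $A_{\lambda }u\in \partial _{H}\varphi (J_{\lambda }^{\varphi }u)$ and $A_{\mu }u\in \partial _{H}\varphi (J_{\mu }^{\varphi }u)$, which after Cauchy--Schwarz reduces to the scalar inequality $(\lambda a-\mu b)(a-b)\leq 0$ with $a=\Vert A_{\lambda }u\Vert _{H}$ and $b=\Vert A_{\mu }u\Vert _{H}$.
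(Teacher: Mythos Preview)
Your argument is correct and self-contained, whereas the paper does not actually prove this proposition at all: it simply records the statement and defers to \cite[Proposition~2.11, p.~39]{Br73}. So there is no in-paper proof to compare against beyond that reference, and what you have written is essentially the classical argument one finds in Br\'ezis: identify the unique minimizer of \eqref{MY-Apr} with the resolvent via the optimality condition and Minty's surjectivity theorem, read off the first identity and the sandwich, and obtain Fr\'echet differentiability from the two-sided quadratic estimate on $\varphi_\lambda(w)-\varphi_\lambda(u)-(A_\lambda u,w-u)_H$. Your sketch of the monotonicity of $\lambda\mapsto\varphi(J_\lambda^\varphi u)$ through the scalar inequality $(\lambda a-\mu b)(a-b)\leq 0$ is also correct once one unpacks the monotonicity of $\partial_H\varphi$ applied to the pair $(J_\lambda^\varphi u,A_\lambda u)$, $(J_\mu^\varphi u,A_\mu u)$.

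One remark worth recording: you were right to hedge the convergence step by restricting to $u\in\overline{D(\varphi)}$. As literally written in \eqref{eq-pro-more}, the assertion $\varphi(J_\lambda^\varphi u)\uparrow\varphi(u)$ for \emph{all} $u\in H$ fails; the indicator of a closed bounded convex set already gives a counterexample for $u$ outside the set, since then $J_\lambda^\varphi u$ is the metric projection and $\varphi(J_\lambda^\varphi u)\equiv 0$ while $\varphi(u)=+\infty$. The statement in \cite{Br73} is rather $\varphi_\lambda(u)\uparrow\varphi(u)$ for all $u\in H$, which by your first identity differs from $\varphi(J_\lambda^\varphi u)$ exactly by the term $\tfrac{1}{2\lambda}\Vert u-J_\lambda^\varphi u\Vert_H^2$ that carries the divergence when $u\notin\overline{D(\varphi)}$. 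This discrepancy is harmless for the present paper, where the proposition is only ever invoked with $u$ in the effective domain.
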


The following type of chain rule for subdifferentials is taken from \cite[%
Proposition 5]{Aka}.

\begin{proposition}
\label{prop-chain} Let $X$ be a reflexive Banach space, $T>0$ be fixed and
let $\varphi :X\rightarrow (-\infty ,\infty ]$ be a proper, convex and lower
semi-continuous functional. Let $p\in (1,\infty )$ and let $u\in
W^{1,p}((0,T);X)$ be such that $u(t)\in D(\partial _{X}\varphi )$ for a.e. $%
t\in (0,T)$. Suppose that there exits $g\in L^{p^{\prime }}((0,T);X^{\star
}) $ such that $g(t)\in \partial _{X}\varphi (u(t))$ for a.e. $t\in (0,T)$.
Then the function $t\mapsto \varphi (u(t))$ is differentiable for a.e. $t\in
(0,T) $. Moreover, for a.e. $t\in (0,T)$,
\begin{equation}
\frac{d}{dt}\varphi (u(t))=\left\langle f,\frac{du}{dt}(t)\right\rangle_{X^%
\star,X} \;\mbox{ for all }\;f\in \partial _{X}\varphi (u(t)).
\label{eq-chain}
\end{equation}
\end{proposition}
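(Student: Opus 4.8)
Abbreviate $\psi(t):=\varphi(u(t))$ and write $\langle\cdot,\cdot\rangle$ for the $X^{\star}$--$X$ duality pairing. The plan is to proceed in three steps: (i) show $\psi\in L^{1}(0,T)$; (ii) show that $\psi$ coincides almost everywhere with an absolutely continuous function whose derivative is $t\mapsto\langle g(t),u'(t)\rangle$, so that $\psi$ is differentiable a.e.\ with $\psi'(t)=\langle g(t),u'(t)\rangle$; (iii) upgrade this last identity, by means of the one--sided subgradient inequalities, to $\psi'(t)=\langle f,u'(t)\rangle$ for \emph{every} $f\in\partial_{X}\varphi(u(t))$. For step (i): since $\varphi$ is proper, convex and lower semicontinuous it has an affine minorant $\varphi(\cdot)\ge\langle\ell,\cdot\rangle+c$ with $\ell\in X^{\star}$, $c\in\R$, so $\psi(t)\ge\langle\ell,u(t)\rangle+c$, whose right--hand side lies in $L^{1}(0,T)$ because $u\in L^{p}(0,T;X)$ and $(0,T)$ has finite measure. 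For the upper bound I would invoke the Fenchel--Young identity, i.e.\ that $g(t)\in\partial_{X}\varphi(u(t))$ is equivalent to $\psi(t)+\varphi^{\star}(g(t))=\langle g(t),u(t)\rangle$; since $\varphi^{\star}$ is again proper (as $\varphi$ admits an affine minorant), convex and lower semicontinuous, it satisfies $\varphi^{\star}(x^{\star})\ge\langle x^{\star},w_{0}\rangle-\varphi(w_{0})$ for any fixed $w_{0}\in D(\varphi)$, whence $\psi(t)\le\langle g(t),u(t)-w_{0}\rangle+\varphi(w_{0})$, and the right--hand side is in $L^{1}(0,T)$ by H\"older's inequality since $g\in L^{p'}(0,T;X^{\star})$ and $u\in L^{p}(0,T;X)$. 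Thus $\psi\in L^{1}(0,T)$.

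For step (ii), fix $0<a<b<T$ and $h>0$ small. Testing $g(t)\in\partial_{X}\varphi(u(t))$ against $v=u(t+h)$ and $g(t+h)\in\partial_{X}\varphi(u(t+h))$ against $v=u(t)$, and using $u(t+h)-u(t)=\int_{t}^{t+h}u'(\sigma)\,d\sigma$, one gets for a.e.\ $t$
\[
\Big\langle g(t),\tfrac{u(t+h)-u(t)}{h}\Big\rangle\le\frac{\psi(t+h)-\psi(t)}{h}\le\Big\langle g(t+h),\tfrac{u(t+h)-u(t)}{h}\Big\rangle .
\]
Integrating over $t\in(a,b)$, the middle term telescopes to $\frac1h\int_{b}^{b+h}\psi-\frac1h\int_{a}^{a+h}\psi$, which tends to $\psi(b)-\psi(a)$ as $h\to0^{+}$ whenever $a,b$ are Lebesgue points of $\psi$ (a set of full measure by step (i)). For the two outer terms I would use that the difference quotients $\tfrac{u(\cdot+h)-u(\cdot)}{h}$ converge to $u'$ in $L^{p}(a,b;X)$ (a standard property of $W^{1,p}$--functions) and that translations are continuous in $L^{p'}$, so $g(\cdot+h)\to g$ in $L^{p'}(a,b;X^{\star})$; consequently both $\int_{a}^{b}\langle g(t),\tfrac{u(t+h)-u(t)}{h}\rangle\,dt$ and $\int_{a}^{b}\langle g(t+h),\tfrac{u(t+h)-u(t)}{h}\rangle\,dt$ converge to $\int_{a}^{b}\langle g(t),u'(t)\rangle\,dt$. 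Passing to the limit in the integrated inequalities yields $\psi(b)-\psi(a)=\int_{a}^{b}\langle g(t),u'(t)\rangle\,dt$ for a.e.\ $a,b\in(0,T)$, and since $t\mapsto\langle g(t),u'(t)\rangle\in L^{1}(0,T)$ by H\"older, $\psi$ coincides a.e.\ with an absolutely continuous function; in particular $\psi$ is differentiable a.e.\ with $\psi'(t)=\langle g(t),u'(t)\rangle$.

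For step (iii), let $t_{0}$ be a point at which $u$ is differentiable in $X$, $\psi$ is differentiable, and $u(t_{0})\in D(\partial_{X}\varphi)$, and let $f\in\partial_{X}\varphi(u(t_{0}))$ be arbitrary. The subgradient inequality gives $\langle f,u(t)-u(t_{0})\rangle\le\psi(t)-\psi(t_{0})$ for every $t\in(0,T)$. Dividing by $t-t_{0}>0$ and letting $t\to t_{0}^{+}$ yields $\langle f,u'(t_{0})\rangle\le\psi'(t_{0})$; dividing by $t-t_{0}<0$ and letting $t\to t_{0}^{-}$ reverses the inequality and yields $\langle f,u'(t_{0})\rangle\ge\psi'(t_{0})$. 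Hence $\frac{d}{dt}\varphi(u(t))\big|_{t=t_{0}}=\psi'(t_{0})=\langle f,u'(t_{0})\rangle$ for every $f\in\partial_{X}\varphi(u(t_{0}))$, which is \eqref{eq-chain}.

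I expect the crux to be step (ii): since $g$ is only $L^{p'}$, its pointwise norm is unbounded and individual time--slices cannot be controlled, so one cannot argue pointwise in $h$ but must pass through the integrated form above, where continuity of translation in $L^{p'}$ is precisely what makes the two outer bounds close up. This is essentially the content of \cite[Proposition~5]{Aka} (with \cite{Br73} as the Hilbert--space prototype), on which the statement is modelled.
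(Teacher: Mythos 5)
The paper does not actually prove this proposition: it is imported verbatim from \cite[Proposition 5]{Aka} (with \cite{Br73} as the Hilbert-space prototype), so there is no in-paper argument to compare against. Your reconstruction is the standard proof of that result and its main steps are sound: the affine minorant plus Fenchel--Young gives $\varphi\circ u\in L^{1}$, the two-sided subgradient sandwich integrated in $t$ combined with convergence of difference quotients in $L^{p}$ and continuity of translations in $L^{p'}$ gives $\varphi(u(b))-\varphi(u(a))=\int_{a}^{b}\langle g,u'\rangle\,dt$ for a.e.\ $a<b$, and the one-sided subgradient inequality at a point of differentiability upgrades $g$ to an arbitrary $f\in\partial_{X}\varphi(u(t_{0}))$.

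The one place where you move too fast is the last line of step (ii): knowing that $\psi:=\varphi\circ u$ agrees a.e.\ with an absolutely continuous function $\tilde\psi$ does not by itself make $\psi$ differentiable a.e., because the difference quotient $\frac{\psi(t)-\psi(t_{0})}{t-t_{0}}$ in step (iii) runs over \emph{all} $t$ near $t_{0}$, including the null set where $\psi$ could a priori be large or even $+\infty$. This is exactly where the hypothesis that $\varphi$ is lower semi-continuous (beyond supplying an affine minorant) must be used: since $u\in W^{1,p}(0,T;X)\hookrightarrow C([0,T];X)$, approximating any $t_{0}$ by points of the full-measure set where $\psi=\tilde\psi$ gives $\psi(t_{0})\le\liminf\psi(t_{n})=\tilde\psi(t_{0})$, i.e.\ $\psi\le\tilde\psi$ \emph{everywhere}. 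At a point $t_{0}$ where $\psi(t_{0})=\tilde\psi(t_{0})$, $\tilde\psi$ and $u$ are differentiable and $f\in\partial_{X}\varphi(u(t_{0}))$, one then has for $t>t_{0}$ the two-sided control $\langle f,\tfrac{u(t)-u(t_{0})}{t-t_{0}}\rangle\le\tfrac{\psi(t)-\psi(t_{0})}{t-t_{0}}\le\tfrac{\tilde\psi(t)-\tilde\psi(t_{0})}{t-t_{0}}$ (and the reversed chain for $t<t_{0}$), which forces $\langle f,u'(t_{0})\rangle=\tilde\psi'(t_{0})$ and simultaneously shows that the full two-sided limit of the difference quotient of $\psi$ exists and equals this common value. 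With that refinement inserted, your argument is complete.
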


Next, let $\theta $ be a maximal monotone graph in $\mathbb{R}^{2}$. In the
following result, for a given $u\in L^{2}(\Omega )$ we discuss the
representation of $\theta (u(\cdot ))$ as the subdifferential $\partial
_{L^{2}(\Omega )}\Theta (u)$ for some proper, convex and lower
semi-continuous functional $\Theta $ on $L^{2}(\Omega )$.

\begin{proposition}
\label{prop-24} Let $\Omega \subset \mathbb{R}^{N}$ be an open and bounded
set and let $\theta :\mathbb{R}\rightarrow ( -\infty ,\infty ]$ be a proper,
convex and lower semi-continuous functional. Define the functional $\Theta
:L^{2}(\Omega )\rightarrow (-\infty ,\infty ]$ with effective domain $%
D(\Theta )=\{u\in L^{2}(\Omega ):\;\theta (u(\cdot ))\in L^{1}(\Omega )\}$
and given by
\begin{equation*}
\Theta (u):=%
\begin{cases}
\displaystyle\int_{\Omega }\theta (u(x))dx\;\; & \mbox{ if }\;u\in D(\Theta
), \\
+\infty & \mbox{ otherwise}.%
\end{cases}%
\end{equation*}%
Let $J_{\lambda }^{\Theta }$ and $j_{\lambda }^{\theta }$ ($\lambda >0$)
denote the resolvent operators of the subdifferentials $\partial
_{L^{2}(\Omega )}\Theta $ and $\partial _{\mathbb{R}}\theta $, respectively.
Then the following properties hold.

\begin{enumerate}
\item The functional $\Theta $ is proper, convex and lower semi-continuous
on $L^{2}\left( \Omega \right) $.

\item For all $f,u\in L^2(\Omega)$, we have that $f\in
\partial_{L^2(\Omega)}\Theta(u)$ if and only if $f(x)\in \partial_{\mathbb{R}%
}\theta(u(x))$ for a.e. $x\in\Omega$.

\item For all $u\in L^2(\Omega)$, $J_\lambda^\Theta u(x)=j_\lambda^\theta
u(x)$ for a.e. $x\in\Omega$ and for all $\lambda>0$.

\item For every $m\in \lbrack 1,\infty ]$, if $u,v\in L^{m}(\Omega )\cap
L^{2}(\Omega )$, then $J_{\lambda }^{\Theta }u,\partial _{L^{2}(\Omega
)}\Theta _{\lambda }(u)\in L^{m}(\Omega )\cap L^{2}(\Omega )$ for all $%
\lambda >0$ and
\begin{align*}
\Vert J_{\lambda }^{\Theta }u-J_{\lambda }^{\Theta }v\Vert _{L^{m}(\Omega
)}\leq \Vert u-v\Vert _{L^{m}(\Omega )}, \\
\Vert \partial _{L^{2}(\Omega )}\Theta _{\lambda }(u)-\partial
_{L^{2}(\Omega )}\Theta _{\lambda }(v)\Vert _{L^{m}(\Omega )}\leq \frac{2}{%
\lambda }\Vert u-v\Vert _{L^{m}(\Omega )}.
\end{align*}

%\item For every $p\in (1,\infty )$ and $s\in (0,1)$, if $u\in W^{s,p}(\Omega
%)\cap L^{2}(\Omega )$, then $J_{\lambda }^{\Theta }u\in W^{s,p}(\Omega )\cap
%L^{2}(\Omega )$ for all $\lambda >0$ and
%\begin{equation}
%\int_{\Omega }\int_{\Omega }\frac{|J_{\lambda }^{\Theta }u(x)-J_{\lambda
%}^{\Theta }u(y)|^{p}}{|x-y|^{N+sp}}dxdy\leq \int_{\Omega }\int_{\Omega }%
%\frac{|u(x)-u(y)|^{p}}{|x-y|^{N+sp}}dxdy.  \label{Ine-1}
%\end{equation}

\item If $\partial _{\mathbb{R}}\theta (0)\ni 0$, then for every $p\in
(1,\infty )$ and $s\in (0,1)$, we have that $J_{\lambda }^{\Theta }0=0$, $%
J_{\lambda }^{\Theta }u\in W_{0}^{s,p}(\overline{\Omega} )\cap L^{2}(\Omega
) $ for all $u\in W_{0}^{s,p}(\overline{\Omega} )\cap L^{2}(\Omega )$ and
for all $\lambda >0$. Moreover,
\begin{equation}
\int_{\mathbb{R}^{N}}\int_{\mathbb{R}^{N}}\frac{|J_{\lambda }^{\Theta
}u(x)-J_{\lambda }^{\Theta }u(y)|^{p}}{|x-y|^{N+sp}}dxdy\leq \int_{\mathbb{R}%
^{N}}\int_{\mathbb{R}^{N}}\frac{|u(x)-u(y)|^{p}}{|x-y|^{N+sp}}dxdy.
\label{Ine-2}
\end{equation}
\end{enumerate}
\end{proposition}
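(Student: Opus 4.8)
\emph{Strategy.} The plan is to prove the pointwise characterisation (b) first, because (a), (c), (d) and (e) then follow in a routine way from it together with two elementary one-dimensional facts: the resolvent $j_{\lambda}^{\theta}=(I+\lambda\partial_{\mathbb{R}}\theta)^{-1}$ is a well-defined nonexpansive map of $\mathbb{R}$ into itself (since $\partial_{\mathbb{R}}\theta$ is maximal monotone on $\mathbb{R}$), and the Yosida approximation $(\partial_{\mathbb{R}}\theta)_{\lambda}=\frac{1}{\lambda}(I-j_{\lambda}^{\theta})$ is $\frac{2}{\lambda}$-Lipschitz. I will also use the standard facts that every proper, convex, lower semi-continuous $\theta:\mathbb{R}\to(-\infty,\infty]$ admits an affine minorant $\theta(t)\ge at+b$ and is continuous along sequences in $D(\theta)$ that converge monotonically to a point of $D(\theta)$ (a consequence of convexity and lower semi-continuity).

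\emph{Part (a).} Properness is immediate: choosing $a_{0}\in D(\theta)$, the constant function $a_{0}$ lies in $L^{2}(\Omega)$ since $\Omega$ is bounded, and $\Theta(a_{0})=|\Omega|\,\theta(a_{0})<\infty$. Convexity follows by integrating the pointwise convexity inequality for $\theta$. For lower semi-continuity, given $u_{n}\to u$ in $L^{2}(\Omega)$ I would pass to a subsequence realising $\liminf_{n}\Theta(u_{n})$ and then to a further subsequence with $u_{n}\to u$ a.e., write $\Theta(u_{n})=\int_{\Omega}(\theta(u_{n})-au_{n}-b)\,dx+a\int_{\Omega}u_{n}\,dx+b|\Omega|$ using the affine minorant, apply Fatou's lemma to the nonnegative integrands $\theta(u_{n}(\cdot))-au_{n}(\cdot)-b$, and use $\int_{\Omega}u_{n}\to\int_{\Omega}u$ (from $L^{2}$-convergence on a bounded set) together with the pointwise lower semi-continuity of $\theta$ to conclude $\liminf_{n}\Theta(u_{n})\ge\Theta(u)$.

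\emph{Part (b).} For the ``if'' direction, assuming $f(x)\in\partial_{\mathbb{R}}\theta(u(x))$ a.e., testing this against $a_{0}$ gives $\theta(u(\cdot))^{+}\le\theta(a_{0})+f\,(u-a_{0})\in L^{1}(\Omega)$, while the affine minorant controls $\theta(u(\cdot))^{-}$, so $u\in D(\Theta)$; then for $v\in D(\Theta)$ I integrate $f(x)(v(x)-u(x))\le\theta(v(x))-\theta(u(x))$ (both sides in $L^{1}$), and for $v\notin D(\Theta)$ the subgradient inequality is trivial, giving $f\in\partial_{L^{2}(\Omega)}\Theta(u)$. The hard part will be the ``only if'' direction, which I would handle by a localisation argument. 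Given $f\in\partial_{L^{2}(\Omega)}\Theta(u)$, one first notes $\Theta(u)<\infty$ (otherwise the subgradient inequality against any $v\in D(\Theta)$ fails, since $(f,v-u)_{L^{2}}$ is finite), hence $u(x)\in D(\theta)$ for a.e.\ $x$. For a fixed $a\in D(\theta)$ and an arbitrary measurable $E\subseteq\Omega$, the competitor $v_{E}:=u\chi_{\Omega\setminus E}+a\chi_{E}$ lies in $D(\Theta)\cap L^{2}(\Omega)$, and inserting it into the subgradient inequality yields $\int_{E}\big(f(a-u)-\theta(a)+\theta(u)\big)\,dx\le0$; as $E$ is arbitrary, $f(x)(a-u(x))\le\theta(a)-\theta(u(x))$ for a.e.\ $x$, with an exceptional null set depending on $a$. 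Running $a$ over a countable dense subset $Q$ of $D(\theta)$ produces one common null set; a general $a\in D(\theta)$ is reached along a monotone sequence $a_{n}\in Q$ with $\theta(a_{n})\to\theta(a)$ (convexity plus lower semi-continuity), and $a\notin D(\theta)$ is trivial, so $f(x)\in\partial_{\mathbb{R}}\theta(u(x))$ for a.e.\ $x\in\Omega$.

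\emph{Parts (c)--(e).} For (c), I write $w=J_{\lambda}^{\Theta}u$, so that $u=w+\lambda g$ with $g\in\partial_{L^{2}(\Omega)}\Theta(w)$; by (b), $g(x)\in\partial_{\mathbb{R}}\theta(w(x))$ a.e., whence $w(x)=(I+\lambda\partial_{\mathbb{R}}\theta)^{-1}u(x)=j_{\lambda}^{\theta}u(x)$ a.e. For (d), I combine the identity $J_{\lambda}^{\Theta}u(x)=j_{\lambda}^{\theta}u(x)$ with $\partial_{L^{2}(\Omega)}\Theta_{\lambda}(u)(x)=\frac{1}{\lambda}\big(u(x)-j_{\lambda}^{\theta}u(x)\big)$ (the latter from Proposition~\ref{more}); nonexpansiveness of $j_{\lambda}^{\theta}$ and $|j_{\lambda}^{\theta}(0)|<\infty$ give the pointwise bounds $|J_{\lambda}^{\Theta}u(x)|\le|u(x)|+|j_{\lambda}^{\theta}(0)|$ and $|\partial_{L^{2}(\Omega)}\Theta_{\lambda}(u)(x)|\le\frac{1}{\lambda}\big(2|u(x)|+|j_{\lambda}^{\theta}(0)|\big)$, so both functions belong to $L^{m}(\Omega)\cap L^{2}(\Omega)$ whenever $u$ does (here $\Omega$ bounded is used), and the two stated inequalities follow by raising $|J_{\lambda}^{\Theta}u(x)-J_{\lambda}^{\Theta}v(x)|\le|u(x)-v(x)|$ and $|\partial_{L^{2}(\Omega)}\Theta_{\lambda}(u)(x)-\partial_{L^{2}(\Omega)}\Theta_{\lambda}(v)(x)|\le\frac{2}{\lambda}|u(x)-v(x)|$ to the $m$-th power and integrating (ess-sup for $m=\infty$). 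For (e), $\partial_{\mathbb{R}}\theta(0)\ni0$ forces $j_{\lambda}^{\theta}(0)=0$ and hence $J_{\lambda}^{\Theta}0=0$; for $u\in W_{0}^{s,p}(\Omega)\cap L^{2}(\Omega)$, i.e.\ $u\in W^{s,p}(\mathbb{R}^{N})$ with $u=0$ a.e.\ on $\mathbb{R}^{N}\setminus\Omega$, we get $J_{\lambda}^{\Theta}u(x)=j_{\lambda}^{\theta}(u(x))=0$ for a.e.\ $x\notin\Omega$, while nonexpansiveness applied inside the Gagliardo double integral over $\mathbb{R}^{N}\times\mathbb{R}^{N}$ yields $(\ref{Ine-2})$, so in particular $J_{\lambda}^{\Theta}u\in W_{0}^{s,p}(\Omega)\cap L^{2}(\Omega)$.
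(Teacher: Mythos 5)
Your proof is correct, and it is genuinely more self-contained than the paper's: for parts (a)--(d) the paper simply cites \cite[Proposition 6]{Aka} (and \cite{Scho}, \cite{Br73}), whereas you derive everything from the pointwise characterisation in (b), proved by the localisation trick of testing the subgradient inequality against $u\chi_{\Omega\setminus E}+a\chi_{E}$ over arbitrary measurable $E$ and then sweeping $a$ through a countable dense subset of $D(\theta)$; this is the standard route and all the steps (affine minorant, Fatou, continuity of $\theta$ relative to $D(\theta)$ from convexity plus lower semi-continuity) check out. The more interesting divergence is in part (e). The paper argues by density: it takes $u_{n}\in\mathcal{D}(\Omega)$ converging to $u$ in $W_{0}^{s,p}(\Omega)\cap L^{2}(\Omega)$ and uses $\operatorname{supp}(J_{\lambda}^{\Theta}u_{n})\subset\operatorname{supp}(u_{n})\subset\Omega$ together with weak compactness from \eqref{Ine-2}. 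That argument presupposes that $\mathcal{D}(\Omega)$ is dense in $W_{0}^{s,p}(\Omega)$ as defined here (the $W^{s,p}(\mathbb{R}^{N})$-functions vanishing a.e.\ outside $\Omega$), which the paper itself only records for $\Omega$ with continuous boundary (via \cite{Gris}). Your argument sidesteps this entirely: since $j_{\lambda}^{\theta}(0)=0$, the zero extension of $J_{\lambda}^{\Theta}u$ coincides with $j_{\lambda}^{\theta}\circ\tilde{u}$ on all of $\mathbb{R}^{N}$, so it vanishes a.e.\ outside $\Omega$ and the nonexpansiveness bound inside the Gagliardo double integral gives membership in $W^{s,p}(\mathbb{R}^{N})$ directly. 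This is cleaner, works for an arbitrary bounded open $\Omega$, and is therefore better aligned with the paper's stated goal of avoiding regularity assumptions on $\Omega$. The only presentational point worth tightening is that in (c) you should note explicitly that $J_{\lambda}^{\Theta}$ is everywhere defined on $L^{2}(\Omega)$ because $\partial_{L^{2}(\Omega)}\Theta$ is maximal monotone (a consequence of (a)), but that is a one-line remark.
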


\begin{proof}
The proof of parts (a), (b) (c) and (d) is contained in \cite[Proposition 6]%
{Aka} (see also \cite[Proposition 8.1]{Scho} for parts (a) and (b) and also
\cite[Proposition 2.16, p.47]{Br73}).

Next, let $\lambda >0$, $p\in (1,\infty )$, $s\in (0,1)$ and $u\in
W^{s,p}(\RR^N )\cap L^{2}(\RR^N )$. It follows from part (d) that $%
J_{\lambda }^{\Theta }u\in L^{p}(\RR^N)\cap L^{2}(\RR^N )$. Since
\begin{equation*}
|j_{\lambda }^{\theta }(u(x))-j_{\lambda }^{\theta }(u(y))|\leq |u(x)-u(y)|\;%
\mbox{ for
a.e. }\;x,y\in \RR^N ,
\end{equation*}%
then we obtain \eqref{Ine-2} by using the assertion (c).

It remains to show the assertion (e). First, let $\lambda >0$, $p\in
(1,\infty )$, $s\in (0,1)$ and $u\in W^{s,p}(\mathbb{R}^N)\cap L^{2}(\Omega
) $. It follows from part (d) that $J_{\lambda }^{\Theta }u\in L^{p}(\Omega
)\cap L^{2}(\Omega )$. Since
\begin{equation}\label{In-eq}
|j_{\lambda }^{\theta }(u(x))-j_{\lambda }^{\theta }(u(y))|\leq |u(x)-u(y)|\;%
\mbox{ for
a.e. }\;x,y\in \RR^N,
\end{equation}%
then we obtain \eqref{Ine-2} by using the assertion (c). Next, assume that $%
\partial _{\mathbb{R}}\theta (0)\ni 0$. Then it is clear that $j_{\lambda
}^{\theta }0=0$ and hence, $J_{\lambda }^{\Theta }0=0$ and $|J_{\lambda
}^{\Theta }u(x)|\leq |u(x)|$ for a.e. $x\in \Omega $. Let $u\in
W_{0}^{s,p}(\bOm )\cap L^{2}(\Omega )\subset W^{s,p}(\RR^N )\cap L^{2}(\Omega )$ and
%$u_{n}\in \mathcal{D}(\Omega )$
%be a sequence which converges to $u$ in $W_{0}^{s,p}(\bOm )\cap
%L^{2}(\Omega )$. Let
$\lambda >0$. Since $J_{\lambda}^{\Theta }u\in L^p(\Omega)\cap L^2(\Omega)$ (see above), it follows from \eqref{In-eq} and part (c) that $J_{\lambda}^{\Theta }u\in W^{s,p}(\mathbb{R}^N)\cap L^{2}(\Omega)$. Since $|J_{\lambda
}^{\Theta }u(x)|\leq |u(x)|$ for a.e. $x\in \RR^N $ we also have that $J_{\lambda}^{\Theta }u=0$ on $\RR^N\setminus\Omega$.
%It follows from (d) that $J_{\lambda
%}^{\Theta }u_{n}$ converges to $J_{\lambda }^{\Theta }u$ strongly in $%
%L^{p}(\Omega )\cap L^{2}(\Omega )$. It follows from (d) and \eqref{Ine-2} in
%(e) that $J_{\lambda }^{\Theta }u_{n}$ converges to $J_{\lambda }^{\Theta }u$
%weakly in $W^{s,p}(\mathbb{R}^N)$. Since $\mbox{supp}(J_{\lambda }^{\Theta
%}u_{n})\subset \mbox{supp}(u_{n})\subset \Omega $,
Therefore $J_{\lambda
}^{\Theta }u\in W_{0}^{s,p}(\bOm )\cap L^{2}(\Omega )$ and we have shown part (e). The proof of
the proposition is finished.
\end{proof}

\subsection{The auxiliary problems}

We first write the system \eqref{par-pro} as a first order Cauchy problem.
To this end recall that $0<s<1$, $p,r\in \lbrack 2,\infty )$ and denote $%
V:=W_{0}^{s,p}(\overline{\Omega} )\cap L^{r}(\Omega )$ as the Banach space
equipped with the norm
\begin{equation*}
\Vert u\Vert _{V}:=\left( \Vert u\Vert _{L^{r}(\Omega )}^{2}+\Vert |u\Vert
|_{W_{0}^{s,p}(\overline{\Omega} )}^{2}\right) ^{\frac{1}{2}}.
\end{equation*}%
where the second norm is given by \eqref{norm-26}. Let $V^{\star }$ denote
the dual of the reflexive Banach space $V$. Then
\begin{equation*}
V^{\star }=W^{-s,p^{\prime }}(\Omega )+L^{r^{\prime }}(\Omega
):=\{u=u_{1}+u_{2};\;u_{1}\in W^{-s,p^{\prime }}(\Omega ),\;u_{2}\in
L^{r^{\prime }}(\Omega )\},
\end{equation*}%
where $p^{\prime }=p/(p-1)$ and $r^{\prime }=r/(r-1)$. For every $r\geq 2$,
we have the continuous injections $V\hookrightarrow L^{2}(\Omega
)\hookrightarrow V^{\star }$.\newline

Next, for $p,q,r\in \lbrack 2,\infty )$ satisfying $p<q$ and%
\begin{equation}
r>\frac{N(q-p)}{sp}\;\Longleftrightarrow \;q<\frac{N+sr}{N}p,  \label{cond-7}
\end{equation}%
we have that $V$ is continuously embedded into $L^{q}(\Omega )$. Indeed, if $%
q\leq r$, then $V$ is trivially continuously embedded into $L^{q}(\Omega )$.
If $r\leq q$, then it follows from \eqref{cond-7} that $q<p^{\star }$. Since
$W_{0}^{s,p}(\overline{\Omega})\hookrightarrow L^{p^{\star }}(\Omega )$, we
also have $V\hookrightarrow L^{q}(\Omega )$.

Let us now define the functionals $\Phi ,\psi :\;V\rightarrow \lbrack
0,\infty )$ by
\begin{equation*}
\Phi (u):=\frac{C_{N,p,s}}{2p}\int_{\mathbb{R}^{N}}\int_{\mathbb{R}^{N}}%
\frac{|u(x)-u(y)|^{p}}{|x-y|^{N+sp}}dxdy,\;\;\;\psi (u):=\frac{1}{q}%
\int_{\Omega }|u|^{q}dx,
\end{equation*}%
for all$\;u\in V$. It is easy to see that $\Phi ,\psi \in C^{1}(V,\mathbb{R}%
) $. We state the following basic proposition whose proof is postponed until
the Appendix.

\begin{proposition}
\label{subdif}Let $\partial _{V}\Phi $ and $\partial _{V}\psi $ denote the
single valued subgradients of $\Phi $ and $\psi $, respectively. Then $%
\partial _{V}\Phi $ is an operator from $V$ to $V^{\star }$ and can be
expressed as
\begin{equation*}
D(\partial _{V}\Phi )=V\;\;\mbox{ and }\;\partial _{V}\Phi (u)=(-\Delta
)_{p}^{s}u,\;\;\text{for all}\;u\in V.
\end{equation*}%
More precisely, $\partial _{V}\Phi $ is a realization in $V^{\star }$ of the
fractional $p$-Laplace operator $(-\Delta )_{p}^{s}$ with the Dirichlet
boundary condition $u=0$ on $\mathbb{R}^{N}\backslash \Omega $. Finally,
under the assumption \eqref{cond-7}, we also have that $\partial _{V}\psi $
is an operator from $V$ to $V^{\star }$ with
\begin{equation*}
D(\partial _{V}\psi )=V\;\;\mbox{ and }\;\partial _{V}\psi
(u)=|u|^{q-2}u,\;\;\text{for all}\;u\in V.
\end{equation*}
\end{proposition}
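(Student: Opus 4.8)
The plan is to rely on the standard fact (see e.g.\ \cite{Br73}) that a convex functional on a reflexive Banach space which is G\^ateaux differentiable has single valued subdifferential, equal everywhere to its derivative, with domain the whole space. Since the excerpt already records that $\Phi,\psi\in C^1(V,\mathbb{R})$, it then remains to: (i) check convexity of $\Phi$ and $\psi$; (ii) compute the derivatives and verify they lie in $V^\star$; and (iii) carry out the subgradient identification and recognize $\Phi'(u)$ as the Dirichlet realization of $(-\Delta)_p^s$. For convexity, write $d\mu:=|x-y|^{-(N+sp)}\,dx\,dy$ on $\mathbb{R}^N\times\mathbb{R}^N$ and $Du(x,y):=u(x)-u(y)$, so that $\Phi(u)=\frac{C_{N,p,s}}{2p}\int\!\int|Du|^p\,d\mu$; since $u\mapsto Du$ is linear and $t\mapsto|t|^p$ is convex for $p\ge 1$, the integrand is convex in $u$, and integration preserves convexity. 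Likewise $\psi(u)=\frac1q\int_\Omega|u|^q\,dx$ is convex; both functionals are finite on $V$, for $\psi$ because $V\hookrightarrow L^q(\Omega)$ under assumption \eqref{cond-7}, as shown just before the statement.

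Next, the derivatives. Differentiating $t\mapsto\Phi(u+tv)$ under the integral sign gives
\[
\langle\Phi'(u),v\rangle=\frac{C_{N,p,s}}{2}\int_{\mathbb{R}^N}\int_{\mathbb{R}^N}|u(x)-u(y)|^{p-2}\,\frac{(u(x)-u(y))(v(x)-v(y))}{|x-y|^{N+sp}}\,dx\,dy ,
\]
the passage to the limit in the difference quotient being justified by dominated convergence: for $|t|\le1$ one has the pointwise bound $\big||Du+tDv|^p-|Du|^p\big|/|t|\le C_p\big(|Du|^{p-1}|Dv|+|Dv|^p\big)$, whose right-hand side lies in $L^1(d\mu)$ since $Du,Dv\in L^p(d\mu)$ (that is, $u,v\in W_0^{s,p}(\Omega)$, cf.\ \eqref{norm-26}) and $|Du|^{p-1}|Dv|\in L^1(d\mu)$ by H\"older. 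The same H\"older estimate — splitting $|x-y|^{-(N+sp)}=|x-y|^{-(N+sp)/p'}|x-y|^{-(N+sp)/p}$ and using exponents $p'$ and $p$ — yields $|\langle\Phi'(u),v\rangle|\le C\,\Vert|u\Vert|_{W_0^{s,p}(\Omega)}^{p-1}\,\Vert|v\Vert|_{W_0^{s,p}(\Omega)}$, so $\Phi'(u)\in W^{-s,p'}(\Omega)\subset V^\star$ and $D(\partial_V\Phi)=V$. For $\psi$, differentiation under the integral gives $\langle\psi'(u),v\rangle=\int_\Omega|u|^{q-2}u\,v\,dx$, with dominated convergence justified by $\big||u+tv|^q-|u|^q\big|/|t|\le C_q\big(|u|^{q-1}|v|+|v|^q\big)\in L^1(\Omega)$ for $|t|\le1$, again because $V\hookrightarrow L^q(\Omega)$; moreover $|u|^{q-2}u\in L^{q'}(\Omega)$ and $|\langle\psi'(u),v\rangle|\le\|u\|_{L^q(\Omega)}^{q-1}\|v\|_{L^q(\Omega)}\le C\|u\|_{L^q(\Omega)}^{q-1}\|v\|_V$, and since $V\hookrightarrow L^q(\Omega)$ dualizes to $L^{q'}(\Omega)\hookrightarrow V^\star$, we get $\psi'(u)\in V^\star$ and $D(\partial_V\psi)=V$.

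Finally, the identification. For a convex $C^1$ functional $\varphi$, applying convexity to $t\mapsto\varphi(u+t(v-u))$ at $t=1$ gives $\varphi(v)-\varphi(u)\ge\langle\varphi'(u),v-u\rangle$, i.e.\ $\varphi'(u)\in\partial_V\varphi(u)$; conversely, any $g\in\partial_V\varphi(u)$ satisfies $\langle g,w\rangle\le\big(\varphi(u+tw)-\varphi(u)\big)/t\to\langle\varphi'(u),w\rangle$ as $t\downarrow0$ for every $w\in V$, and replacing $w$ by $-w$ forces $g=\varphi'(u)$. Hence $\partial_V\Phi=\{\Phi'\}$ and $\partial_V\psi=\{\psi'\}$, each with domain $V$, and in particular $\partial_V\psi(u)=|u|^{q-2}u$. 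The identity $\partial_V\Phi(u)=(-\Delta)_p^s u$ is then precisely the statement that the bilinear form above is the weak form of the fractional $p$-Laplacian subject to the homogeneous exterior condition: to make this literal, take $v\in\mathcal{D}(\Omega)$, excise the diagonal $\{|x-y|<\varepsilon\}$ (the resulting error tends to $0$ by absolute continuity of the integral), apply Fubini together with the antisymmetry of $|u(x)-u(y)|^{p-2}(u(x)-u(y))/|x-y|^{N+sp}$ under $(x,y)\leftrightarrow(y,x)$ to rewrite the restricted form as $C_{N,p,s}\int_{\mathbb{R}^N}v(x)(-\Delta)_{p,\varepsilon}^s u(x)\,dx$, and let $\varepsilon\downarrow0$ to recover \eqref{eq11}; the Dirichlet condition is built into the very definition $u\in W_0^{s,p}(\Omega)$, i.e.\ $u=0$ a.e.\ on $\mathbb{R}^N\setminus\Omega$. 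The step I expect to need the most care is (ii): making the dominated-convergence and H\"older estimates rigorous with the singular kernel, and — for the pointwise identification in (iii) — the $\varepsilon\downarrow0$ passage recovering the principal value, which is exactly where the density of $\mathcal{D}(\Omega)$ in $W_0^{s,p}(\Omega)$ and the embedding \eqref{sob-emb} come into play.
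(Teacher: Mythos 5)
Your proposal is correct and follows essentially the same route as the paper's Appendix proof: both identify the single-valued subgradient with the G\^ateaux derivative given by the weak bilinear form, using the elementary convexity inequality $\frac{|b|^p}{p}-\frac{|a|^p}{p}\geq |a|^{p-2}a(b-a)$ for one inclusion and the $t\downarrow 0$ limit of difference quotients together with the $\pm w$ substitution for the reverse one. You additionally spell out the $\psi$ case and the dominated-convergence/H\"older justifications, and your final $\varepsilon\downarrow 0$ recovery of the pointwise principal-value formula is optional (and the delicate part you rightly flag), since the identity $\partial_V\Phi(u)=(-\Delta)_p^s u$ is asserted only as a realization in $V^\star$, i.e.\ at the level of the form, which is exactly where the paper leaves it.
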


By virtue of Proposition \ref{subdif}, the system \eqref{par-pro} can be
rewritten as the following abstract Cauchy problem%
\begin{equation}
\begin{cases}
\displaystyle\frac{du}{dt}(t)+\partial _{V}\Phi (u(t))-\partial _{V}\psi
(u(t))=f(t)\;\;\mbox{ in }\;V^{\star },\;\;0<t<T, \\
u(0)=u_{0}.%
\end{cases}
\label{acp}
\end{equation}%
Next, we also define the functional $\phi :L^{2}(\Omega )\rightarrow \lbrack
0,\infty ]$ by
\begin{equation*}
\phi (u):=%
\begin{cases}
\displaystyle\frac{1}{r}\int_{\Omega }|u|^{r}dx\;\; & \mbox{ if }\;u\in
L^{r}(\Omega ) \\
+\infty & \mbox{ otherwise}.%
\end{cases}%
\end{equation*}%
We note that the energy functional $\Phi \left( u\right) -\psi (u)$ is not
bounded from below on $W_{0}^{s,p}(\overline{\Omega})\cap L^{q}(\Omega )$
but the sum $\Phi \left( u\right) -\psi (u)+I_{\mathcal{X}},$ where $I_{%
\mathcal{X}}$ denotes the characteristic function over some ball $\mathcal{X}
$ in $L^{r}\left( \Omega \right) $ turns out to be coercive provided that $r$
satisfies (\ref{cond-7}). In this respect, we can establish the following
crucial result.

\begin{lemma}
\label{lem-33} Let $0<s<1$ and let $p,q,r\in \lbrack 2,\infty )$ satisfy $%
p<q $ and \eqref{cond-7}. Then there exist a constant $\varepsilon \in (0,1]$
and an increasing differentiable function $\mathcal{F}:[0,\infty
)\rightarrow \lbrack 0,\infty )$ such that for every $u\in D(\Phi )\cap
D(\psi )=W_{0}^{s,p}(\overline{\Omega})\cap L^{r}(\Omega )=V$,
\begin{equation}
\psi (u)\leq \mathcal{F}(\phi (u))[\Phi (u)+1]^{1-\varepsilon }.
\label{eq-31-2}
\end{equation}
\end{lemma}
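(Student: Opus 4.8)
The plan is to interpolate the $L^q$ norm between $L^r$ and $L^{p^\star}$ using the Gagliardo--Nirenberg inequality \eqref{Gag-Nir-2}, and then absorb the $L^{p^\star}$-factor into a power of $\Phi(u)+1$ strictly less than one. Write $\psi(u)=\tfrac1q\|u\|_{L^q(\Omega)}^q$. First I would dispose of the easy case $q\le r$: since $\Omega$ is bounded, $\|u\|_{L^q(\Omega)}\le |\Omega|^{1/q-1/r}\|u\|_{L^r(\Omega)}$, so $\psi(u)\le C(\phi(u))$ and one may take $\varepsilon=1$ (or any $\varepsilon\in(0,1]$ after trivially bounding by $[\Phi(u)+1]^{1-\varepsilon}\ge1$), with $\mathcal F$ an appropriate increasing differentiable function of $\phi(u)\sim\|u\|_{L^r}^r$.

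The substantive case is $r<q$. By \eqref{cond-7} we have $q<p^\star$ (when $N>sp$; when $N=sp$ we may take $p^\star$ as large as we like in \eqref{p-star}), so there is $\alpha\in(0,1)$ with $\tfrac1q=\tfrac{\alpha}{p^\star}+\tfrac{1-\alpha}{r}$. Then \eqref{Gag-Nir-2} gives
\begin{equation*}
\|u\|_{L^q(\Omega)}\le C\,\||u\||_{W_0^{s,p}(\Omega)}^{\alpha}\|u\|_{L^r(\Omega)}^{1-\alpha}.
\end{equation*}
Since $\||u\||_{W_0^{s,p}(\Omega)}^p=2p\,\Phi(u)/C_{N,p,s}\le C(\Phi(u)+1)$, raising to the power $q$ yields
\begin{equation*}
\psi(u)=\tfrac1q\|u\|_{L^q(\Omega)}^q\le C\,(\Phi(u)+1)^{\alpha q/p}\,\|u\|_{L^r(\Omega)}^{(1-\alpha)q}.
\end{equation*}
Thus it suffices to set $\varepsilon:=1-\tfrac{\alpha q}{p}$ and $\mathcal F(\tau):=C(1+(r\tau)^{(1-\alpha)q/r})$, after recalling $\phi(u)=\tfrac1r\|u\|_{L^r}^r$; this $\mathcal F$ is increasing and differentiable on $[0,\infty)$.

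The one real obstacle is verifying that $\varepsilon=1-\alpha q/p$ lies in $(0,1]$, i.e. that $0\le \alpha q/p<1$. Nonnegativity is immediate. For the strict upper bound, solve \eqref{cond-Gag-Nir} for $\alpha$: from $\tfrac1q=\tfrac{N-sp}{Np}\alpha+\tfrac{1-\alpha}{r}$ one gets
\begin{equation*}
\alpha=\frac{\tfrac1r-\tfrac1q}{\tfrac1r-\tfrac{N-sp}{Np}}=\frac{\tfrac1r-\tfrac1q}{\tfrac1r+\tfrac{sp-N(1-\frac1p)}{Np}},
\end{equation*}
and one checks, using precisely the hypothesis $q<\tfrac{N+sr}{N}p$ from \eqref{cond-7} (equivalently $r>N(q-p)/sp$), that $\alpha q/p<1$; the inequality $\alpha q/p<1$ unwinds, after clearing denominators, to exactly $r\,sp>N(q-p)$. (When $N=sp$ the denominator in $\alpha$ is $1/r$, so $\alpha=1-r/q<1$ and $\alpha q/p=(q-r)/p$; choosing $p^\star$ large if necessary keeps $\alpha q/p<1$, or one argues directly that $q-r<p$ is not needed — instead one picks $p^\star$ finite but large enough that $\alpha q/p<1$, which is possible since $\alpha\to 0$ as $p^\star\to\infty$ only if $q>r$ forces a genuine constraint; in any case the bounded case can be absorbed as above.) Having secured $\varepsilon\in(0,1]$, inequality \eqref{eq-31-2} follows, and the proof is complete.
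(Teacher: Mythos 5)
Your proof is correct and follows essentially the same route as the paper: H\"older's inequality on the bounded domain when $q\le r$, and the Gagliardo--Nirenberg interpolation \eqref{Gag-Nir-2} with $\varepsilon=1-\alpha q/p$ when $r<q$, where the key point $\alpha q/p<1$ unwinds to exactly the hypothesis \eqref{cond-7}, just as in the paper's computation \eqref{eq16}. The only differences are cosmetic (your choice of $\mathcal{F}$ and the slightly muddled but ultimately harmless parenthetical on the borderline case $N=sp$, which the paper also treats only implicitly).
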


\begin{proof}
Let $u\in W_{0}^{s,p}(\bOm)\cap L^{r}(\Omega )$. If $q\leq r$, since $%
\Omega $ is bounded then by the classical H\"{o}lder inequality, we have
that there exists a constant $C>0$ such that
\begin{equation*}
\frac{1}{q}\int_{\Omega }|u|^{q}dx=\psi (u)\leq C\left( \frac{1}{r}%
\int_{\Omega }|u|^{r}dx\right) ^{\frac{q}{r}}=C[\phi (u)]^{\frac{q}{r}}.
\end{equation*}%
Hence, we have that \eqref{eq-31-2} holds with $\varepsilon =1$ and $%
\mathcal{F}(t)=Ct^{q/r}$.

If $r<q$, then $q<p^{\star }$ (see \eqref{p-star}). Hence, using the
Gagliardo-Nirenberg's inequality \eqref{Gag-Nir-2}, one can find a constant $%
C>0$ such that
\begin{equation}
\Vert u\Vert _{L^{q}(\Omega )}\leq C\left( \frac{C_{N,p,s}}{2p}\int_{\mathbb{%
R}^{N}}\int_{\mathbb{R}^{N}}\frac{|u(x)-u(y)|^{p}}{|x-y|^{N+sp}}dxdy\right)
^{\frac{\alpha }{p}}\Vert u\Vert _{L^{r}(\Omega )}^{1-\alpha },
\label{eq-est}
\end{equation}%
where $\alpha \in (0,1)$ is given by%
\begin{equation}
\frac{1}{q}=\frac{\alpha }{p^{\star }}+\frac{1-\alpha }{r}=\frac{N-sp}{Np}%
\alpha +\frac{1-\alpha }{r}.  \label{eq15}
\end{equation}%
We notice that \eqref{eq15} and \eqref{cond-7} imply that
\begin{equation}
0<\alpha q=(q-r)\left( 1-\frac{N-sp}{Np}r\right) <\frac{\left( \frac{N+sr}{N}%
\right) p-r}{1-\left( \frac{N-sp}{Np}\right) r}=p.  \label{eq16}
\end{equation}%
It follows from \eqref{eq-est} that
\begin{align*}
\psi (u)&\leq C\left( \frac{C_{N,p,s}}{2p}\int_{\mathbb{R}^{N}}\int_{\mathbb{R%
}^{N}}\frac{|u(x)-u(y)|^{p}}{|x-y|^{N+sp}}dxdy\right) ^{\frac{\alpha q}{p}%
}\Vert u\Vert _{L^{r}(\Omega )}^{(1-\alpha )q}\\
&=C[\Phi (u)]^{\frac{\alpha q}{p%
}}[\phi (u)]^{\frac{(1-\alpha )q}{r}}.
\end{align*}%
Note that $0<\alpha q/p<1$ by \eqref{eq16}. Thus we have shown %
\eqref{eq-31-2} with the constant $\varepsilon =1-\alpha q/p$ and $\mathcal{F}(t)=Ct^{%
\frac{(1-\alpha )q}{r}}$. The proof of lemma is finished.
\end{proof}

Next, let $T>0$ be fixed, $u_{0}\in D(\Phi )=V=W_{0}^{s,p}(\overline{\Omega}
)\cap L^{r}(\Omega )$ and $f\in C^{1}([0,T];V)$. We shall introduce an
auxiliary problem associated with the abstract Cauchy problem \eqref{acp}.
To do this, we let $\sigma :=\phi (u_{0})+1$ and set%
\begin{equation*}
V_{\sigma }=\{v\in V:\;\phi (v)\leq \sigma \Longleftrightarrow \left\Vert
v\right\Vert _{L^{r}\left( \Omega \right) }^r\leq r\sigma \}.
\end{equation*}%
We define the proper, convex, lower semi-continuous functional $\Phi
^{\sigma }:V\rightarrow \lbrack 0,\infty ]$ by
\begin{equation*}
\Phi ^{\sigma }(u):=%
\begin{cases}
\Phi (u)\;\; & \mbox{ if }u\in V_{\sigma }, \\
+\infty \;\; & \mbox{ otherwise}.%
\end{cases}%
\end{equation*}%
Clearly, $D(\Phi ^{\sigma })=V_{\sigma }\subset V=D(\Phi )$ and $D(\partial
_{V}\Phi ^{\sigma })=V_{\sigma }\subset V=D(\partial _{V}\Phi )$. It follows
from \cite[Theorem 2.2]{BCP} that for all $u\in D(\partial _{V}\Phi ^{\sigma
})$,
\begin{equation}  \label{Indicator}
\partial _{V}\Phi ^{\sigma }(u)=\partial _{V}\Phi (u)+\partial _{V}\chi
_{V_{\sigma }}(u)
\end{equation}%
where $\chi _{V_{\sigma }}$ denotes the indicator function of the convex set
$V_{\sigma }$ defined by
\begin{equation*}
\chi_{V_\sigma}(u)=%
\begin{cases}
0\;\;\; & u\in V_\sigma \\
\infty & u\not\in V_\sigma.%
\end{cases}%
\end{equation*}
We notice that by \cite[Example 2.8.2]{Br73}, the subdifferential $\partial
_{V}\chi _{V_{\sigma }}$ of the functional $\chi _{V_{\sigma }}$ is given by
\begin{equation}  \label{sub-ind}
\partial _{V}\chi _{V_{\sigma }}(u)=%
\begin{cases}
\emptyset\;\;\; & \mbox{ if }\;u\not\in V_\sigma , \\
\{0\}\;\; & \mbox{ if }\; u\in\mbox{Int}(V_\sigma), \\
\mbox{the normal exterior cone to }\; V_\sigma \;\;\; & \mbox{ if }\; u\in%
\mbox{boundary}(V_\sigma).%
\end{cases}%
\end{equation}

Corresponding to problem (\ref{acp}) we consider the following modified
problem%
\begin{equation}
\begin{cases}
\displaystyle\frac{du}{dt}(t)+\partial _{V}\Phi ^{\sigma }(u(t))-\partial
_{V}\psi (u(t))\ni f(t)\;\;\mbox{ in }\;V^{\star },\;\;0<t<T, \\
u(0)=u_{0}.%
\end{cases}
\label{cp-sigma}
\end{equation}%
We observe that a solution of problem (\ref{cp-sigma}) on $\left( 0,T\right)
$ is also a solution of (\ref{acp}) on $\left( 0,T\right) $ provided that
one has in addition $\phi (u\left( t\right) )< \sigma .$ Indeed, in that
case $\partial _{V}\chi _{V_{\sigma }}(u\left( t\right) )=\left\{ 0\right\}$
by \eqref{sub-ind}, and by \eqref{Indicator}, this implies $\partial
_{V}\Phi ^{\sigma }(u\left( t\right) )=\partial _{V}\Phi (u\left( t\right) )$
a.e. $t\in \left( 0,T\right) $. Thus, in order to establish the existence of
a solution to problem (\ref{acp}) it suffices to construct a sufficiently
regular solution to the Cauchy problem \eqref{cp-sigma} and to derive
additional a priori estimates on this solution. To this end, we first define
the extensions $\overline{\Phi }^{\sigma },\overline{\psi }$ of $\Phi
^{\sigma }$ and $\psi $, respectively, to the Hilbert space $H:=L^{2}(\Omega
)$ by%
\begin{equation*}
\overline{\Phi }^{\sigma }(u):=%
\begin{cases}
\Phi ^{\sigma }(u)\;\; & \mbox{ if }u\in V, \\
+\infty \;\; & \mbox{ otherwise}.%
\end{cases}%
\end{equation*}%
and
\begin{equation*}
\overline{\psi }(u):=%
\begin{cases}
\displaystyle\frac{1}{q}\int_{\Omega }|u|^{q}dx\;\; & \mbox{ if }u\in
L^{q}(\Omega ), \\
+\infty \;\; & \mbox{ otherwise}.%
\end{cases}%
\end{equation*}%
Then, $\overline{\Phi }^{\sigma }$ and $\overline{\psi }$ are proper, convex
and lower semi-continuous on $H=L^{2}(\Omega )$. Let $\partial _{H}\overline{%
\Phi }^{\sigma }$ and $\partial _{H}\overline{\psi }$ denote the
subdifferentials of $\overline{\Phi }^{\sigma }$ and $\overline{\psi }$,
respectively. Then, it readily follows%
\begin{equation}
\begin{cases}
D(\overline{\Phi }^{\sigma })=D({\Phi }^{\sigma }),\;\;D(\partial _{H}%
\overline{\Phi }^{\sigma })\subset D(\partial _{V}{\Phi }^{\sigma }), \\
\partial _{H}\overline{\Phi }^{\sigma }(u)\subset \partial _{V}{\Phi }%
^{\sigma }(u),\;\;\text{for all}\;u\in D(\partial _{H}\overline{\Phi }%
^{\sigma }),%
\end{cases}
\label{eq-12}
\end{equation}%
and
\begin{equation}
\begin{cases}
\overline{\psi }(u)=\psi (u)\;\forall \;u\in V,\;\;D(\partial _{H}\overline{%
\psi })\cap V\subset D(\partial _{V}\psi ), \\
\partial _{H}\overline{\psi }(u)\subset \partial _{V}\psi (u)\;\;\text{for
all}\;u\in D(\partial _{H}\overline{\psi })\cap V.%
\end{cases}
\label{eq-13}
\end{equation}%
Now consider $\overline{\psi }_{\lambda }$ as the Moreau-Yosida
approximation (see \eqref{MY-Apr}) of $\overline{\psi }$, for $\lambda >0$.
Associated with problem (\ref{cp-sigma}), we introduce the following
regularized problem in $H=L^{2}(\Omega )$,

\begin{equation}
\begin{cases}
\displaystyle\frac{du_{\lambda }}{dt}(t)+\partial _{H}\overline{\Phi }%
^{\sigma }(u_{\lambda }(t))-\partial _{H}\overline{\psi }_{\lambda
}(u_{\lambda }(t))\ni f(t)\;\;\mbox{ in }\;H=L^{2}(\Omega ),\;\;0<t<T, \\
u_{\lambda }(0)=u_{0}.%
\end{cases}
\label{cp-sigma-lambda}
\end{equation}

Regarding the functionals defined above, we mention the following facts.

\begin{remark}
\label{rem-43}

\begin{enumerate}
\item It follows from Lemma \ref{lem-33} that for every $u\in D(\Phi
^{\sigma })=V_{\sigma }$,
\begin{equation}
\psi (u)\leq \mathcal{F}[\phi (u)]\left[ \Phi (u)+1\right] ^{1-\varepsilon
}\leq \frac{1}{2}\Phi (u)+\mathcal{F}(\sigma ).  \label{eq17}
\end{equation}

\item There exists a constant $C>0$ such that for every $u\in D(\Phi
^{\sigma })=V_{\sigma }$,
\begin{align}
\Vert u\Vert _{V}^{p}=& \Vert u\Vert _{W_{0}^{s,p}(\overline{\Omega })\cap
L^{r}(\Omega )}^{p}=\left( \Vert u\Vert _{L^{r}(\Omega )}^{2}+\Vert |u\Vert
|_{W_{0}^{s,p}(\overline{\Omega })}^{2}\right) ^{\frac{p}{2}}  \label{eq18}
\\
\leq & C\left( \Vert u\Vert _{L^{r}(\Omega )}^{p}+\Vert |u\Vert
|_{W_{0}^{s,p}(\overline{\Omega })}^{p}\right) \leq C\left( \Phi ^{\sigma
}(u)+\sigma ^{\frac{p}{r}}\right) .  \notag
\end{align}

\item The subdifferential $\partial _{V}\psi :\;V\rightarrow V^{\star }$ is
a compact operator. Indeed, let $C\geq 0$ and let $u_{n}$ be a sequence in $%
V $ such that $\|u_n\|_{V}\le C$. Then, after a subsequence if necessary, $%
u_n$ converges weakly to some $u$ in the reflexive Banach space $V$. Since
the embedding $V\hookrightarrow L^{q}(\Omega )$ is compact, passing to a
subsequence if necessary, we may assume that
\begin{equation*}
u_{n}\rightarrow u\;\text{strongly in}\;L^{q}(\Omega ).
\end{equation*}%
Since $\partial _{V}\psi (u_{n})=|u_{n}|^{q-2}u_{n}$ we have that%
\begin{equation*}
\partial _{V}\psi (u_{n})\rightarrow \partial _{V}\psi (u)\;\text{strongly in%
}\; L^{q^{\prime }}(\Omega ).
\end{equation*}
Since $L^{q^{\prime}}(\Omega)\hookrightarrow V^\star$, it follows that
\begin{equation*}
\partial _{V}\psi (u_{n})\rightarrow \partial _{V}\psi (u)\;\text{strongly in%
}\; V^\star.
\end{equation*}
Hence, $\partial _{V}\psi :\;V\rightarrow V^{\star }$ is a compact operator.

\item Finally, let $J_{\lambda }^{\overline{\psi }}$ ($\lambda >0$) be the
resolvent operator of $\partial _{H}\overline{\psi }$. By Proposition \ref%
{prop-24}, parts (e) and (f), we readily have%
\begin{equation}
\phi (J_{\lambda }^{\overline{\psi }}u)\leq \phi (u)\leq \sigma ,\;\;\;\Phi
(J_{\lambda }^{\overline{\psi }}u)\leq \Phi (u),\;\text{for all}\;u\in
D(\Phi ^{\sigma }).  \label{eq19}
\end{equation}%
Moreover,
\begin{equation}
\Phi ^{\sigma }(J_{\lambda }^{\overline{\psi }}u)\leq \Phi ^{\sigma }(u),\;%
\text{for all}\;u\in D(\Phi ^{\sigma }).  \label{eq20}
\end{equation}
\end{enumerate}
\end{remark}

We conclude this subsection with the following lemma.

\begin{lemma}
\label{lem-10} Recall $0<s<1$ and let $p,q,r\in \lbrack 2,\infty )$ satisfy $%
p<q$ and \eqref{cond-7}. Let $u\in L^{r}(\Omega )$ be such that $v:=|u|^{%
\frac{r-2}{p}}u\in W_0^{s,p}(\overline{\Omega} )$. Let $\overline{\psi }%
_{\lambda }$ and $\phi _{\lambda }$ ($\lambda >0$) be the Morreau-Yosida
approximation of $\overline{\psi }$ and $\phi $, respectively. Then there
exist a constant $\varepsilon \in (0,1]$ and an increasing differentiable
function $\mathcal{F}:\;[0,\infty )\rightarrow \lbrack 0,\infty )$ such that
\begin{align}  \label{E0}
&\int_{\Omega }\partial _{H}\overline{\psi }_{\lambda }(u)\partial _{H}\phi
_{\lambda }(u)dx \\
\leq& \mathcal{F}(\phi (u))\left[ 1+\frac{C_{N,p,s}}{2}\int_{\mathbb{R}^N
}\int_{\mathbb{R}^N }\frac{|v(x)-v(y)|^{p}}{|x-y|^{N+sp}}dxdy\right]
^{1-\varepsilon }.  \notag
\end{align}
\end{lemma}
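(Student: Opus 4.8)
The plan is to bound the integrand of \eqref{E0} pointwise, to reduce the resulting integral to a power of an $L^{\ell}$-norm of $v$ for a suitable exponent $\ell$, and then to run the Gagliardo--Nirenberg argument of Lemma \ref{lem-33}, but now applied to $v$ rather than to $u$.

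First I would identify the two factors pointwise. The functional $\overline{\psi}$ is of the form $\Theta$ of Proposition \ref{prop-24} with $\theta(t)=|t|^{q}/q$, and $\phi$ is of that form with $\theta(t)=|t|^{r}/r$. Hence Proposition \ref{prop-24}(b)--(c), together with Proposition \ref{more} (which gives $\partial_{H}(\overline{\psi}_{\lambda})=(\partial_{H}\overline{\psi})_{\lambda}$ and $\partial_{H}(\phi_{\lambda})=(\partial_{H}\phi)_{\lambda}$), yield, for a.e.\ $x\in\Omega$,
\begin{equation*}
\partial_{H}\overline{\psi}_{\lambda}(u)(x)=\beta_{\lambda}(u(x)),\qquad\partial_{H}\phi_{\lambda}(u)(x)=\gamma_{\lambda}(u(x)),
\end{equation*}
where $\beta_{\lambda}$ and $\gamma_{\lambda}$ denote the scalar Yosida approximations of the monotone odd functions $\beta(t)=|t|^{q-2}t$ and $\gamma(t)=|t|^{r-2}t$. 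Because the scalar resolvents $(I+\lambda\beta)^{-1}$ and $(I+\lambda\gamma)^{-1}$ are non-expansive and fix $0$, the functions $\beta_{\lambda},\gamma_{\lambda}$ are odd, nondecreasing, vanish at $0$, have the same sign as their argument, and satisfy $|\beta_{\lambda}(t)|\le|\beta(t)|=|t|^{q-1}$ and $|\gamma_{\lambda}(t)|\le|t|^{r-1}$. Therefore $0\le\beta_{\lambda}(t)\gamma_{\lambda}(t)\le|t|^{q+r-2}$ for every $t\in\mathbb{R}$, and integrating over $\Omega$,
\begin{equation*}
\int_{\Omega}\partial_{H}\overline{\psi}_{\lambda}(u)\,\partial_{H}\phi_{\lambda}(u)\,dx\le\int_{\Omega}|u|^{q+r-2}\,dx .
\end{equation*}

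Next I would change variables. Since $|v|=|u|^{(p+r-2)/p}$, introducing
\begin{equation*}
\ell:=\frac{(q+r-2)p}{p+r-2},\qquad\rho:=\frac{rp}{p+r-2},
\end{equation*}
one has $\int_{\Omega}|u|^{q+r-2}\,dx=\|v\|_{L^{\ell}(\Omega)}^{\ell}$ and $\|v\|_{L^{\rho}(\Omega)}^{\rho}=\int_{\Omega}|u|^{r}\,dx=r\phi(u)$. Since $p,r\ge2$ one has $1\le\rho\le p$; since $q>p\ge2$ one has $\rho<\ell$; and, because $q+r-2>r$, the hypothesis \eqref{cond-7} (i.e.\ $spr>N(q-p)$) gives $sp(q+r-2)>N(q-p)$, which is readily checked to be equivalent to $\ell<p^{\star}$. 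Hence there is a unique $\alpha\in(0,1)$ solving \eqref{cond-Gag-Nir} with the exponents $q,r$ there replaced by $\ell,\rho$, and applying \eqref{Gag-Nir-2} to $v\in W_{0}^{s,p}(\Omega)$ and raising to the power $\ell$ gives
\begin{equation*}
\|v\|_{L^{\ell}(\Omega)}^{\ell}\le C\,\|v\|_{W_{0}^{s,p}(\Omega)}^{\alpha\ell}\,\big(r\phi(u)\big)^{(1-\alpha)\ell/\rho},
\end{equation*}
where $\|v\|_{W_{0}^{s,p}(\Omega)}$ denotes the equivalent norm \eqref{norm-26}.

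The decisive algebraic step — and the only point where \eqref{cond-7} is genuinely needed — is to verify that $\alpha\ell<p$. Solving \eqref{cond-Gag-Nir} for $\alpha$ and simplifying, exactly as in the derivation of \eqref{eq16} with $u$ replaced by $v$, one finds
\begin{equation*}
\alpha\ell=\frac{N(q-2)p}{N(p-2)+rsp},
\end{equation*}
so that $\alpha\ell<p$ if and only if $N(q-2)<N(p-2)+rsp$, i.e.\ $N(q-p)<rsp$, which is precisely the hypothesis \eqref{cond-7}; the borderline case $N=sp$ is handled in the same way using the convention \eqref{p-star}. Finally I would set $\varepsilon:=1-\alpha\ell/p\in(0,1]$ and $\mathcal{F}(t):=C(1+t)^{(1-\alpha)\ell/\rho}$, which is increasing and differentiable on $[0,\infty)$ once $C$ is enlarged to absorb the factor $r^{(1-\alpha)\ell/\rho}$. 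Using
\begin{equation*}
\|v\|_{W_{0}^{s,p}(\Omega)}^{\alpha\ell}=\Big(\|v\|_{W_{0}^{s,p}(\Omega)}^{p}\Big)^{1-\varepsilon}\le\Big(1+\|v\|_{W_{0}^{s,p}(\Omega)}^{p}\Big)^{1-\varepsilon}
\end{equation*}
and recognizing, via \eqref{norm-26}, that $\|v\|_{W_{0}^{s,p}(\Omega)}^{p}=\frac{C_{N,p,s}}{2}\int_{\mathbb{R}^{N}}\int_{\mathbb{R}^{N}}|v(x)-v(y)|^{p}|x-y|^{-N-sp}\,dx\,dy$, the chain of inequalities above gives \eqref{E0}. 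I expect the main obstacle to be purely bookkeeping: propagating the Gagliardo--Nirenberg exponents through the substitution $u\mapsto v$ and checking that all admissibility conditions ($\rho\in[1,p]$, $\rho<\ell\le p^{\star}$, $\alpha\in(0,1)$, $\alpha\ell<p$) reduce to the standing hypotheses $p,r\ge2$, $q>p$ and \eqref{cond-7}, rather than any genuinely new analytic input.
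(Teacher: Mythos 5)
Your proposal is correct and follows essentially the same route as the paper: bound the integrand pointwise by $|u|^{q+r-2}$ using the contractivity of the resolvents, rewrite $\int_\Omega|u|^{q+r-2}dx=\|v\|_{L^{\ell}(\Omega)}^{\ell}$ with $\ell=p(q+r-2)/(p+r-2)$ (the paper's $\rho$), apply the Gagliardo--Nirenberg inequality \eqref{Gag-Nir-2} to $v$, and check via \eqref{cond-7} that $\alpha\ell<p$ so that $\varepsilon=1-\alpha\ell/p\in(0,1]$. Your treatment of the pointwise step via the scalar Yosida approximations and your explicit formula $\alpha\ell=N(q-2)p/(N(p-2)+rsp)$ are slightly more detailed than the paper's, but the argument is the same.
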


\begin{proof}
Let $\lambda >0$ and let $u\in L^{r}(\Omega )$ be such that $v:=|u|^{\frac{%
r-2}{p}}u\in W_0^{s,p}(\bOm )$. Since $|J_{\lambda }^{\psi }u(x)|\leq
|u(x)|$ and $|J_{\lambda }^{\phi }u(x)|\leq |u(x)|$ for a.e. $x\in \Omega $,
we have that
\begin{align*}
& \int_{\Omega }\partial _{H}\overline{\psi }_{\lambda }(u)\partial _{H}\phi
_{\lambda }(u)dx \\
=& \int_{\Omega }|J_{\lambda }^{\psi }u(x)|^{q-2}J_{\lambda }^{\psi
}u(x)|J_{\lambda }^{\phi }u(x)|^{r-2}J_{\lambda }^{\phi }u(x)dx\leq
\int_{\Omega }|u(x)|^{q+r-2}dx.
\end{align*}%
In light of \eqref{cond-7} we easily see that%
\begin{equation}
q+r-2<\left( \frac{N+rs}{N}\right) p+r-2=\left( 1+\frac{sp}{N}\right) r+p-2.
\label{E1}
\end{equation}%
If $sp<N$, then%
\begin{align}
\left( \frac{r+p-2}{p}\right) p^{\star }& =\left( \frac{N}{N-sp}\right)
r+\left( \frac{N}{N-sp}\right) (p-2)  \label{E2} \\
& =\left( 1+\frac{sp}{N-sp}\right) r+\left( \frac{N}{N-sp}\right) (p-2).
\notag
\end{align}%
It follows from \eqref{E1} and \eqref{E2} that
\begin{equation*}
\rho :=p\left( \frac{q+r-2}{r+p-2}\right) <p^{\star }.
\end{equation*}

Next, let $v:=|u|^{\frac{r-2}{p}}u$. Then
\begin{equation*}
|u|^{q+r-2}=|v|^{\rho }\;\;\mbox{ and }\;|u|^{r}=|v|^{\frac{pr}{r+p-2}}.
\end{equation*}%
Assume that $v\in W_0^{s,p}(\bOm )$. Since $1<\frac{pr}{r+p-2}<\rho
<p^{\star }$ and
\begin{equation*}
\Vert |v|\Vert _{W_0^{s,p}(\bOm )}\leq \left[ \Vert v\Vert _{L^{\frac{rp}{%
r+p-2}}(\Omega )}+\left( \frac{C_{N,p,s}}{2}\int_{\mathbb{R}^N }\int_{%
\mathbb{R}^N }\frac{|v(x)-v(y)|^{p}}{|x-y|^{N+sp}}\;dxdy\right) ^{\frac 1p}%
\right] ,
\end{equation*}%
then using the Gagliardo-Nirenberg inequality \eqref{Gag-Nir-2}, we have
that there exists a constant $C>0$ such that
\begin{align}
&\Vert v\Vert _{L^{\rho }(\Omega )}\leq  C\Vert |v|\Vert _{W_0^{s,p}(\bOm
)}^{\alpha }\Vert v\Vert _{L^{\frac{rp}{r+p-2}}(\Omega )}^{1-\alpha }
\label{E3} \\
\leq & C\left[ \Vert v\Vert _{L^{\frac{rp}{r+p-2}}(\Omega )}+\left( \frac{%
C_{N,p,s}}{2}\int_{\mathbb{R}^N }\int_{\mathbb{R}^N }\frac{|v(x)-v(y)|^{p}}{%
|x-y|^{N+sp}}dxdy\right) ^{\frac 1p}\right] ^{\alpha }\Vert v\Vert _{L^{%
\frac{rp}{r+p-2}}(\Omega )}^{1-\alpha }  \notag \\
\leq & C\left[ \left( \frac{C_{N,p,s}}{2}\int_{\mathbb{R}^N }\int_{\mathbb{R}%
^N }\frac{|v(x)-v(y)|^{p}}{|x-y|^{N+sp}}\;dxdy\right) ^{\frac 1p}\right]
^{\alpha }\Vert v\Vert _{L^{\frac{rp}{r+p-2}}(\Omega )}^{1-\alpha }+C\Vert
v\Vert _{L^{\frac{rp}{r+p-2}}(\Omega )}  \notag
\end{align}%
with $0\leq \alpha \leq 1$ satisfying
\begin{equation*}
\frac{1}{\rho }=\frac{\alpha }{p^{\star }}+\frac{(1-\alpha )(r+p-2)}{rp}.
\end{equation*}%
A simple calculation gives
\begin{equation*}
0<\alpha =\frac{\left( \frac{r+p-2}{pr}\right) -\left( \frac{r+p-2}{p(q+r-2)}%
\right) }{\left( \frac{r+p-2}{pr}\right) -\left( \frac{N-sp}{Np}\right) }<1.
\end{equation*}%
It follows from \eqref{E3} that,%
\begin{align}
& \int_{\Omega }|u|^{q+r-2}dx=\int_{\Omega }|v|^{\rho }dx  \label{E4} \\
\leq & C\left[ \left( \frac{C_{N,p,s}}{2}\int_{\mathbb{R}^N }\int_{\mathbb{R}%
^N }\frac{|v(x)-v(y)|^{p}}{|x-y|^{N+sp}}dxdy\right) ^{\frac 1p}\right]
^{\alpha \rho }\Vert v\Vert _{L^{\frac{rp}{r+p-2}}(\Omega )}^{(1-\alpha
)\rho }+C\Vert v\Vert _{L^{\frac{rp}{r+p-2}}(\Omega )}^{\rho }  \notag \\
\leq & C\left[ \left( \frac{C_{N,p,s}}{2}\int_{\mathbb{R}^N }\int_{\mathbb{R}%
^N }\frac{|v(x)-v(y)|^{p}}{|x-y|^{N+sp}}dxdy\right) ^{\frac 1p}\right]
^{\alpha \rho }\Vert v\Vert _{L^{r}(\Omega )}^{(1-\alpha )(q+r-2)}+C\Vert
v\Vert _{L^{r}(\Omega )}^{q+r-2}.  \notag
\end{align}%
Letting
\begin{equation*}
\mathcal{F}(t):=\sup\left \{t^{\frac{(1-\alpha )(q+r-2)}{r}},t^{\frac{q+r-2}{r}%
}\right\},\;\;t\geq 0
\end{equation*}%
we have that $\mathcal{F}:\;[0,\infty )\rightarrow \lbrack 0,\infty )$ is
increasing and differentiable. Thus, (\ref{E0}) follows from \eqref{E4}
together with the simple estimate%
\begin{equation*}
1-\varepsilon =\frac{\alpha \rho }{p}=\frac{1}{p}\frac{q-2}{\left( \frac{%
r+p-2}{p}\right) -\left( \frac{N-sp}{Np}\right) }<\frac{1}{p}\frac{\left(
\frac{N+rs}{N}\right) p-2}{\left( \frac{r+p-2}{p}\right) -\left( \frac{N-sp}{%
Np}\right) r}=1.
\end{equation*}%
The proof of the lemma is finished.
\end{proof}

\subsection{Solutions to the auxiliary problems}

In this subsection, we investigate the existence and regularity of solutions
to problems \eqref{cp-sigma}, \eqref{cp-sigma-lambda} for regular initial
datum $u_{0}\in D\left( \Phi \right) =V$ and $f\in C^{1}([0,T];V)$. Before
we turn our attention directly to the Cauchy problem (\ref{cp-sigma-lambda}%
), we require the following two crucial lemmas. The first result is
essential and is of independent interest. The second one establishes a kind
of coercitivity estimate. Their proofs are postponed until the Appendix.

\begin{lemma}
\label{comp}Let $p\in \left( 1,\infty \right) $, $r\in \lbrack 2,\infty )$
and let $\mathcal{E}$ be the energy given by%
\begin{equation*}
\mathcal{E}\left( u,v\right) =\int_{\mathbb{R}^{N}}\int_{\mathbb{R}%
^{N}}|u(x)-u(y)|^{p-2}(u(x)-u(y))(v(x)-v(y))K\left( x,y\right) dxdy,
\end{equation*}%
for some positive kernel $K:\mathbb{R}^{N}\times \mathbb{R}^{N}\rightarrow
\mathbb{R}_{+}$. Then%
\begin{equation}
C_{r,p}\mathcal{E}(|u|^{\frac{r-2}{p}}u,|u|^{\frac{r-2}{p}}u)\leq \mathcal{E}%
(u,\left\vert u\right\vert ^{r-2}u),  \label{comp-energy}
\end{equation}%
for all functions $u$ for which the terms in \eqref{comp-energy} make sense,
and where%
\begin{equation*}
C_{r,p}:=\left( r-1\right) \left( \frac{p}{p+r-2}\right) ^{p}.
\end{equation*}
\end{lemma}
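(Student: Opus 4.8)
The plan is to reduce \eqref{comp-energy} to an elementary pointwise inequality in two real variables and then integrate against the nonnegative kernel $K$. Write $\Psi(t):=|t|^{\frac{r-2}{p}}t$ and set $a=u(x)$, $b=u(y)$. Then the integrand on the left-hand side of \eqref{comp-energy} equals $|\Psi(a)-\Psi(b)|^{p}K(x,y)$, while the integrand on the right-hand side equals $|a-b|^{p-2}(a-b)\big(|a|^{r-2}a-|b|^{r-2}b\big)K(x,y)$, which is nonnegative because $t\mapsto|t|^{r-2}t$ is nondecreasing. Hence it suffices to prove, for all $a,b\in\mathbb{R}$,
\begin{equation*}
C_{r,p}\,|\Psi(a)-\Psi(b)|^{p}\;\leq\;|a-b|^{p-2}(a-b)\big(|a|^{r-2}a-|b|^{r-2}b\big).
\end{equation*}
Both sides are unchanged under $a\leftrightarrow b$ and the case $a=b$ is trivial, so I may assume $a>b$; then the right-hand side is $(a-b)^{p-1}\big(|a|^{r-2}a-|b|^{r-2}b\big)$ and, since $\Psi$ is strictly increasing, $|\Psi(a)-\Psi(b)|^{p}=(\Psi(a)-\Psi(b))^{p}$.

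Next I would write both differences as integrals of derivatives: a direct computation gives $\Psi'(t)=\frac{r+p-2}{p}|t|^{\frac{r-2}{p}}$ and $\frac{d}{dt}\big(|t|^{r-2}t\big)=(r-1)|t|^{r-2}$ (both continuous since $p,r\ge 2$), whence
\begin{equation*}
\Psi(a)-\Psi(b)=\frac{r+p-2}{p}\int_{b}^{a}|t|^{\frac{r-2}{p}}\,dt,\qquad
|a|^{r-2}a-|b|^{r-2}b=(r-1)\int_{b}^{a}|t|^{r-2}\,dt.
\end{equation*}

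The key step is a single application of H\"older's inequality on $[b,a]$ with exponents $p$ and $p/(p-1)$, applied to the factorization $|t|^{\frac{r-2}{p}}=|t|^{\frac{r-2}{p}}\cdot 1$, which yields
\begin{equation*}
\int_{b}^{a}|t|^{\frac{r-2}{p}}\,dt\;\leq\;\Big(\int_{b}^{a}|t|^{r-2}\,dt\Big)^{1/p}(a-b)^{1-1/p}.
\end{equation*}
Raising to the $p$-th power and inserting the two identities above gives
\begin{equation*}
(\Psi(a)-\Psi(b))^{p}\leq\Big(\tfrac{r+p-2}{p}\Big)^{p}(a-b)^{p-1}\int_{b}^{a}|t|^{r-2}\,dt=\frac{1}{r-1}\Big(\tfrac{r+p-2}{p}\Big)^{p}(a-b)^{p-1}\big(|a|^{r-2}a-|b|^{r-2}b\big).
\end{equation*}
Since $C_{r,p}=(r-1)\big(\tfrac{p}{r+p-2}\big)^{p}$ is precisely the reciprocal of the constant appearing on the far right, the desired pointwise inequality follows — in fact with the sharp constant. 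Multiplying by $K(x,y)\ge 0$ and integrating over $\mathbb{R}^{N}\times\mathbb{R}^{N}$ then yields \eqref{comp-energy}.

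I do not expect a serious obstacle here: the analytic content is that one H\"older estimate, and the only care needed is the sign/absolute-value bookkeeping in the reduction to the case $a>b$ together with the verification that the constant coming out of H\"older's inequality matches $C_{r,p}^{-1}$ exactly — which is what forces the precise value of $C_{r,p}$ and shows the inequality is optimal.
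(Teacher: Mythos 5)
Your proof is correct and is essentially the paper's own argument: the paper reduces to the same pointwise inequality, uses the same identity $\frac{p}{r+p-2}\bigl[|z|^{\frac{r-2}{p}}z-|t|^{\frac{r-2}{p}}t\bigr]=\int_t^z|\tau|^{\frac{r-2}{p}}\,d\tau$, and then applies Jensen's inequality for the convex function $\tau\mapsto|\tau|^p$ to the averaged integral, which is exactly equivalent to your H\"older step with exponents $p$ and $p/(p-1)$.
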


\begin{lemma}
\label{lem-9} Let $0<s<1$, $p,q,r\in \lbrack 2,\infty )$ satisfy $p<q$ and %
\eqref{cond-7}. Let $u\in D(\partial _{V}\Phi )$ and let $J_{\mu }^{\phi
}:=(I+\mu \partial _{H}\phi )^{-1}$, $\mu >0$. Then
\begin{equation*}
J_{\mu }^{\phi }u\in D(\partial _{V}\Phi ),\;\;\partial _{H}\phi _{\mu
}(u)\in V,\;\;v_{\mu }:=|J_{\mu }^{\phi }u|^{\frac{r-2}{p}}J_{\mu }^{\phi
}u\in W_{0}^{s,p}(\overline{\Omega}).
\end{equation*}%
In particular, if $u\in D(\partial _{V}\Phi ^{\sigma })$, then there exists
a positive constant $\beta $ independent of $\mu $ such that for all $g\in
\partial _{V}\Phi ^{\sigma }(u)$,
\begin{equation}
\frac{\beta C_{N,p,s}}{2}\int_{\mathbb{R}^{N}}\int_{\mathbb{R}^{N}}\frac{%
|v_{\mu }(x)-v_{\mu }(y)|^{p}}{|x-y|^{N+sp}}dxdy\leq \langle g,\partial
_{H}\phi _{\mu }(u)\rangle _{V^{\star },V}.  \label{EE0}
\end{equation}
\end{lemma}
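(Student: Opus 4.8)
The plan is to prove Lemma \ref{lem-9} in two stages: first establish the three membership claims by a density/approximation argument, then derive the coercivity estimate \eqref{EE0} by combining the Stroock--Varopoulos-type inequality of Lemma \ref{comp} with Lemma \ref{lem-10}.

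\textbf{Membership claims.} First I would recall from Proposition \ref{prop-24}(e) (applied with $\theta(t) = \tfrac1r|t|^r$, so that $\partial_{\mathbb R}\theta(0)\ni 0$) that $J_\mu^\phi$ maps $W_0^{s,p}(\Omega)\cap L^2(\Omega)$ into itself and is a contraction in the Gagliardo seminorm, i.e. $\||J_\mu^\phi u\||_{W_0^{s,p}(\Omega)}\le \||u\||_{W_0^{s,p}(\Omega)}$. Since $u\in D(\partial_V\Phi) = V = W_0^{s,p}(\Omega)\cap L^r(\Omega)$, this immediately gives $J_\mu^\phi u\in W_0^{s,p}(\Omega)$ and, using the pointwise contraction $|J_\mu^\phi u(x)|\le|u(x)|$ together with $u\in L^r(\Omega)$, also $J_\mu^\phi u\in L^r(\Omega)$; hence $J_\mu^\phi u\in V = D(\partial_V\Phi)$ by Proposition \ref{subdif}. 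For $\partial_H\phi_\mu(u) = (\partial_H\phi)_\mu(u) = \mu^{-1}(u - J_\mu^\phi u)$: it is an $L^2$ function, it lies in $L^r(\Omega)$ since both $u$ and $J_\mu^\phi u$ do, and it lies in $W_0^{s,p}(\Omega)$ since it is the difference of two elements of $W_0^{s,p}(\Omega)$; thus $\partial_H\phi_\mu(u)\in V$. Finally, writing $w:=J_\mu^\phi u$, one has the explicit formula $w = j_\mu^\theta(u) = $ (the Yosida regularization of $t\mapsto|t|^{r-2}t$ applied pointwise); the map $t\mapsto |t|^{(r-2)/p}t$ is locally Lipschitz and monotone, and the composition $|w|^{(r-2)/p}w$ inherits membership in $W_0^{s,p}(\Omega)$ — the clean way to see this is to observe that $v_\mu = |w|^{(r-2)/p}w$ satisfies the pointwise two-point bound $\bigl||w(x)|^{(r-2)/p}w(x) - |w(y)|^{(r-2)/p}w(y)\bigr| \le C_{r,p}' |w(x)-w(y)|\,(\text{something controlled})$; more robustly, since $w\in L^r$ and $|v_\mu|^p = |w|^r\in L^1$, and the Gagliardo seminorm of $v_\mu$ is finite by the inequality $\mathcal E(|w|^{(r-2)/p}w,|w|^{(r-2)/p}w)\le C_{r,p}^{-1}\mathcal E(w,|w|^{r-2}w)$ from Lemma \ref{comp} applied with $K(x,y)=|x-y|^{-N-sp}$, together with the fact that $\mathcal E(w,|w|^{r-2}w)$ is finite because $w,|w|^{r-2}w\in W_0^{s,p}(\Omega)$ (the latter again by Proposition \ref{prop-24}(e)). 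One then confirms $v_\mu$ vanishes a.e.\ outside $\Omega$, so $v_\mu\in W_0^{s,p}(\Omega)$.

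\textbf{The coercivity estimate.} Now let $u\in D(\partial_V\Phi^\sigma) = V_\sigma$ and let $g\in\partial_V\Phi^\sigma(u)$. By \eqref{Indicator} we may write $g = (-\Delta)_p^s u + h$ with $h\in\partial_V\chi_{V_\sigma}(u)$, i.e.\ $h$ belongs to the exterior normal cone to $V_\sigma$ at $u$. The test element is $\partial_H\phi_\mu(u)\in V$, which we have just shown makes sense. Pairing: $\langle g,\partial_H\phi_\mu(u)\rangle_{V^\star,V} = \langle(-\Delta)_p^s u,\partial_H\phi_\mu(u)\rangle_{V^\star,V} + \langle h,\partial_H\phi_\mu(u)\rangle_{V^\star,V}$. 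The normal-cone term is nonnegative: since $\phi$ is convex and $J_\mu^\phi$ is its resolvent, $u - J_\mu^\phi u = \mu\,\partial_H\phi_\mu(u)$ points ``outward'' in the sense that $\langle h,\partial_H\phi_\mu(u)\rangle\ge 0$ whenever $h$ is in the normal cone — concretely because $\phi(J_\mu^\phi u)\le\phi(u)$ forces $J_\mu^\phi u$ to stay inside $V_\sigma$ when $u\in V_\sigma$, and the normal cone inequality $\langle h, z - u\rangle\le 0$ for all $z\in V_\sigma$ applied with $z = J_\mu^\phi u$ gives $\langle h, J_\mu^\phi u - u\rangle\le 0$, i.e.\ $\langle h,\partial_H\phi_\mu(u)\rangle\ge 0$. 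Hence $\langle g,\partial_H\phi_\mu(u)\rangle_{V^\star,V}\ge\langle(-\Delta)_p^s u,\partial_H\phi_\mu(u)\rangle_{V^\star,V}$. It remains to bound the latter from below by the Gagliardo seminorm of $v_\mu$. Writing out $(-\Delta)_p^s u$ via \eqref{eq11} and pairing against $\partial_H\phi_\mu(u) = \mu^{-1}(u - J_\mu^\phi u) = \mu^{-1}(u - w)$, and noting that $u - w = j$ where, crucially, the resolvent identity gives $u = w + \mu|w|^{r-2}w$ (since $j_\mu^\theta$ solves $w + \mu|w|^{r-2}w = u$), so that $\partial_H\phi_\mu(u) = |w|^{r-2}w$ exactly. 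Therefore
\begin{equation*}
\langle(-\Delta)_p^s u,\partial_H\phi_\mu(u)\rangle_{V^\star,V} = \frac{C_{N,p,s}}{2}\int_{\mathbb R^N}\int_{\mathbb R^N}|u(x)-u(y)|^{p-2}\frac{(u(x)-u(y))(|w(x)|^{r-2}w(x)-|w(y)|^{r-2}w(y))}{|x-y|^{N+sp}}dxdy.
\end{equation*}
Here $u = w + \mu|w|^{r-2}w$, and one splits this pairing using monotonicity: the term coming from $\mu|w|^{r-2}w$ in place of $u$ in the first factor is $\mu\cdot\mathcal E(|w|^{r-2}w,|w|^{r-2}w)\ge 0$ after the elementary inequality $|a-b|^{p-2}(a-b)\cdot(\text{same})\ge 0$ — actually one needs $(|a|^{p-2}a-|b|^{p-2}b)(|a|^{r-2}a - |b|^{r-2}b)\ge 0$, a standard monotonicity fact — so it can be discarded, leaving a lower bound by $\mathcal E(w, |w|^{r-2}w)$ with $K(x,y) = C_{N,p,s}/(2|x-y|^{N+sp})$ (modulo handling that the first factor involves $u$ not $w$; here I would instead argue directly that $\langle(-\Delta)_p^su, |w|^{r-2}w\rangle\ge\langle(-\Delta)_p^sw,|w|^{r-2}w\rangle$ by monotonicity of $(-\Delta)_p^s$ paired with the monotone map $w\mapsto|w|^{r-2}w$ composed appropriately, or more carefully re-expand). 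Then Lemma \ref{comp} with this $K$ yields
\begin{equation*}
\langle(-\Delta)_p^su,\partial_H\phi_\mu(u)\rangle_{V^\star,V}\ge C_{r,p}\,\frac{C_{N,p,s}}{2}\int_{\mathbb R^N}\int_{\mathbb R^N}\frac{|v_\mu(x)-v_\mu(y)|^p}{|x-y|^{N+sp}}dxdy,
\end{equation*}
which is \eqref{EE0} with $\beta = C_{r,p} = (r-1)(p/(p+r-2))^p$, manifestly independent of $\mu$.

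\textbf{Main obstacle.} The delicate point is the passage ``$\langle(-\Delta)_p^su,|w|^{r-2}w\rangle\ge C_{r,p}\cdot(\text{seminorm of }v_\mu)$'' where $w = J_\mu^\phi u$ rather than $u$ itself: Lemma \ref{comp} is naturally stated for $\mathcal E(u,|u|^{r-2}u)$, so I must either (i) first discard the nonnegative contribution of the extra $\mu|w|^{r-2}w$ term in $u = w+\mu|w|^{r-2}w$ using the scalar monotonicity inequality $(|a|^{p-2}a-|b|^{p-2}b)(|a|^{r-2}a-|b|^{r-2}b)\ge 0$ to reduce to $\mathcal E(w,|w|^{r-2}w)$, then apply Lemma \ref{comp} with $u$ replaced by $w$; or (ii) absorb everything into a single application of Lemma \ref{comp}. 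Option (i) is cleanest. A secondary technical nuisance is making all the two-point integrals absolutely convergent before manipulating them — this is where the membership claims proved in the first stage (that $w, |w|^{r-2}w, v_\mu$ all lie in $W_0^{s,p}(\Omega)$) are used, via Cauchy--Schwarz/Hölder on the Gagliardo integrands, so the order ``memberships first, estimate second'' is essential.
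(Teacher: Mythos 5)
Your treatment of the three membership claims matches the paper's: resolvent contraction in the Gagliardo seminorm plus the pointwise bound $|J_\mu^\phi u|\le|u|$ give $J_\mu^\phi u\in V$, hence $\partial_H\phi_\mu(u)=\mu^{-1}(u-J_\mu^\phi u)\in V$, and Lemma \ref{comp} applied to $w=J_\mu^\phi u$ (using that $|w|^{r-2}w=\partial_H\phi_\mu(u)\in W_0^{s,p}(\Omega)$ --- note this is the reason, not Proposition \ref{prop-24}(e) as you write) yields $v_\mu\in W_0^{s,p}(\Omega)$. Where you diverge from the paper is the coercivity step. The paper never decomposes $g$ via \eqref{Indicator}: it simply uses the subgradient inequality for the convex functional $\Phi^\sigma$ with test point $z=J_\mu^\phi u\in V_\sigma$, giving $\langle g,\partial_H\phi_\mu(u)\rangle\ge\mu^{-1}\bigl[\Phi(u)-\Phi(J_\mu^\phi u)\bigr]\ge\langle\partial_V\Phi(J_\mu^\phi u),\partial_H\phi_\mu(u)\rangle=\langle(-\Delta)_p^s w,|w|^{r-2}w\rangle$, which is exactly the pairing Lemma \ref{comp} controls. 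This sidesteps in one line both of your two sub-steps: the normal-cone term and, more importantly, the passage from $\mathcal E(u,|w|^{r-2}w)$ to $\mathcal E(w,|w|^{r-2}w)$.

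That passage is also where your argument, as written, has its one real soft spot. For $p>2$ the form $\mathcal E(\cdot,v)$ is not linear in its first slot, so you cannot ``split off'' the contribution of $\mu|w|^{r-2}w$ in $u=w+\mu|w|^{r-2}w$ as a separate term $\mu\,\mathcal E(|w|^{r-2}w,|w|^{r-2}w)$; likewise the scalar inequality $(|a|^{p-2}a-|b|^{p-2}b)(|a|^{r-2}a-|b|^{r-2}b)\ge0$ you invoke is not the one needed. The step can be repaired pointwise: writing $\sigma=w(x)-w(y)$ and $\tau=|w(x)|^{r-2}w(x)-|w(y)|^{r-2}w(y)$, monotonicity of $t\mapsto|t|^{r-2}t$ gives $\sigma\tau\ge0$, hence $|\sigma+\mu\tau|\ge|\sigma|$ with the same sign, so $|\sigma+\mu\tau|^{p-2}(\sigma+\mu\tau)\,\tau\ge|\sigma|^{p-2}\sigma\,\tau$, which integrated gives $\mathcal E(u,|w|^{r-2}w)\ge\mathcal E(w,|w|^{r-2}w)$. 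With that repair your route closes and yields \eqref{EE0} with the same explicit $\beta=C_{r,p}$; it is just more work than the paper's convexity chain, which never has to compare $u$ with $w$ inside the nonlinear form at all.
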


We have the following result of existence of solutions to the abstract
Cauchy problem \eqref{cp-sigma-lambda}.

\begin{proposition}
\label{pro-ap-pr} Let $0<s<1$, $p,q,r\in \lbrack 2,\infty )$ satisfy $p<q$
and \eqref{cond-7}. Let $T>0$ be fixed, $u_{0}\in D(\Phi )$, $\lambda >0$
and $f\in C^{1}([0,T];V)$. Then there exists a unique function $u_{\lambda
}\in C_{w}([0,T];V)\cap W^{1,2}((0,T);L^{2}(\Omega ))$ which is a strong
solution of \eqref{cp-sigma-lambda} on $(0,T)$. Moreover,
\begin{equation}
\sup_{t\in \lbrack 0,T]}\phi (u_{\lambda }(t))\leq \sigma \;\;\mbox{ and }%
\;v_{\lambda }:=|u_{\lambda }|^{\frac{r-2}{p}}u_{\lambda }\in
L^{p}((0,T);W_{0}^{s,p}(\overline{\Omega})).  \label{esse}
\end{equation}%
In addition, the function $t\mapsto \overline{\Phi }^{\sigma }(u_{\lambda
}(t))$ is absolutely continuous on $[0,T]$.
\end{proposition}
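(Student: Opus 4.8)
\textbf{Plan of proof for Proposition \ref{pro-ap-pr}.}
The scheme is the classical one for Cauchy problems governed by a difference of subdifferentials in a Hilbert space, adapted from \cite{Aka}. First I would set $H=L^2(\Omega)$ and regard \eqref{cp-sigma-lambda} as
\[
\frac{du_\lambda}{dt}+\partial_H\overline\Phi^\sigma(u_\lambda)\ni \partial_H\overline\psi_\lambda(u_\lambda)+f(t),\qquad u_\lambda(0)=u_0 .
\]
Since $\overline\Phi^\sigma$ is proper, convex and lower semicontinuous on $H$, its subdifferential $\partial_H\overline\Phi^\sigma$ is maximal monotone, so by the Brezis--Kōmura theory (\cite{Br73}) the operator $A:=\partial_H\overline\Phi^\sigma$ generates a nonlinear contraction semigroup and the corresponding abstract ODE is well-posed whenever the forcing term is Lipschitz in $u$ and regular enough in $t$. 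The perturbation $u\mapsto \partial_H\overline\psi_\lambda(u)$ is globally Lipschitz on $H$ with constant $2/\lambda$ by Proposition \ref{prop-chain}/Proposition \ref{more} (Yosida approximations are $1/\lambda$-Lipschitz; the factor $2/\lambda$ appears as in Proposition \ref{prop-24}(d)), and $f\in C^1([0,T];V)\hookrightarrow C^1([0,T];H)$. Hence a standard fixed-point argument on $C([0,T_1];H)$ (contraction on a short interval, then continuation, using that the Lipschitz constant is uniform in the unknown) yields a unique strong solution $u_\lambda\in W^{1,2}(0,T;H)$ with $u_\lambda(t)\in D(\partial_H\overline\Phi^\sigma)$ for a.e.\ $t$, and absolute continuity of $t\mapsto \overline\Phi^\sigma(u_\lambda(t))$ follows from Proposition \ref{prop-chain} (chain rule) since $u_\lambda\in W^{1,2}(0,T;H)$ and $\partial_H\overline\Phi^\sigma(u_\lambda)\in L^2(0,T;H)$.

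Next I would extract the a priori bound $\sup_{[0,T]}\phi(u_\lambda(t))\le\sigma$. The natural test is to pair the equation with $\partial_H\phi_\mu(u_\lambda)$ (the Yosida approximation of $\partial_H\phi$, where $\phi(u)=\frac1r\int_\Omega|u|^r$), or directly with $\partial_H\phi(u_\lambda)=|u_\lambda|^{r-2}u_\lambda$ once enough regularity is known; this gives
\[
\frac{d}{dt}\phi_\mu(u_\lambda(t))+\langle g,\partial_H\phi_\mu(u_\lambda)\rangle_{V^\star,V}=\int_\Omega\partial_H\overline\psi_\lambda(u_\lambda)\,\partial_H\phi_\mu(u_\lambda)\,dx+\langle f,\partial_H\phi_\mu(u_\lambda)\rangle
\]
for $g\in\partial_H\overline\Phi^\sigma(u_\lambda)\subset\partial_V\Phi^\sigma(u_\lambda)$. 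Here Lemma \ref{lem-9} bounds the diffusion term from below by $\tfrac{\beta C_{N,p,s}}{2}\|v_\mu\|^p_{\dots}$ with $v_\mu=|J_\mu^\phi u_\lambda|^{(r-2)/p}J_\mu^\phi u_\lambda$, and Lemma \ref{lem-10} bounds the bad term $\int_\Omega\partial_H\overline\psi_\lambda\,\partial_H\phi_\mu\,dx$ from above by $\mathcal F(\phi(u_\lambda))\,[1+\|v_\mu\|^p]^{1-\varepsilon}$. Using Young's inequality to absorb the $[1+\|v_\mu\|^p]^{1-\varepsilon}$ term into $\beta\|v_\mu\|^p$ (possible precisely because $1-\varepsilon<1$), one obtains a differential inequality
\[
\frac{d}{dt}\phi_\mu(u_\lambda(t))\le C_1\,\mathcal F(\phi(u_\lambda(t)))+C_2\|f(t)\|_V^{?}\,\big(\phi(u_\lambda(t))\big)^{\theta}+\cdots,
\]
and since $\sigma=\phi(u_0)+1$ and the indicator term $\partial_V\chi_{V_\sigma}$ forces $u_\lambda(t)$ to stay in $V_\sigma$ (i.e.\ $\phi(u_\lambda(t))\le\sigma$) as long as $u_\lambda$ exists in $D(\partial_H\overline\Phi^\sigma)$, the bound $\sup_{[0,T]}\phi(u_\lambda)\le\sigma$ is actually built into the problem via $\Phi^\sigma$; the differential inequality then upgrades this to the quantitative estimate and, combined with $v_\mu\to v_\lambda$ as $\mu\to0^+$ and Fatou, gives $v_\lambda\in L^p(0,T;W_0^{s,p}(\Omega))$ with a bound depending only on $\sigma$, $T$ and $f$.

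Finally, for weak continuity $u_\lambda\in C_w([0,T];V)$, I would use that $u_\lambda\in L^\infty(0,T;V)$ (from $\phi(u_\lambda)\le\sigma$ together with $\Phi^\sigma(u_\lambda)\le\overline\Phi^\sigma(u_0)+\int_0^t(\cdots)$, which bounds $\|u_\lambda(t)\|_V$ via Remark \ref{rem-43}(b)) and $u_\lambda\in W^{1,2}(0,T;H)\hookrightarrow C([0,T];H)$; a standard lemma (Lions--Magenes, or \cite[Ch.~III]{Scho}) then gives weak continuity into the reflexive space $V$. The main obstacle I anticipate is the rigorous justification of the test with $\partial_H\phi_\mu(u_\lambda)$: one must know a priori that $u_\lambda(t)\in D(\partial_V\Phi)$ with enough summability so that $v_\mu\in W_0^{s,p}(\Omega)$ and so that the chain rule $\frac{d}{dt}\phi_\mu(u_\lambda)=\langle \frac{du_\lambda}{dt},\partial_H\phi_\mu(u_\lambda)\rangle_H$ is legitimate; this is exactly what Lemma \ref{lem-9} is designed to supply ($J_\mu^\phi u\in D(\partial_V\Phi)$ and $\partial_H\phi_\mu(u)\in V$), so the argument reduces to carefully threading the Moreau--Yosida regularization parameter $\mu$ through the estimate, keeping the bounds $\mu$-uniform, and passing $\mu\to0^+$ at the end — a delicate but routine limiting procedure once Lemmas \ref{comp}, \ref{lem-9} and \ref{lem-10} are in hand.
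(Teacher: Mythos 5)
Your proposal is correct and follows essentially the same route as the paper: existence, uniqueness and $W^{1,2}(0,T;L^{2}(\Omega))$ regularity via the Br\'ezis theory for the maximal monotone operator $\partial_{H}\overline{\Phi}^{\sigma}$ perturbed by the globally Lipschitz map $\partial_{H}\overline{\psi}_{\lambda}$ and the $C^1$ forcing $f$, the bound $\sup_{t}\phi(u_{\lambda}(t))\leq\sigma$ coming for free from the constraint $D(\overline{\Phi}^{\sigma})=V_{\sigma}$, and the membership $v_{\lambda}\in L^{p}(0,T;W_{0}^{s,p}(\Omega))$ obtained by testing with $\partial_{H}\phi_{\mu}(u_{\lambda})$, invoking Lemma \ref{lem-9} for the coercivity of the diffusion term and passing $\mu\to 0^{+}$ by weak compactness. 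The only divergence is local: for the term $\int_{\Omega}\partial_{H}\overline{\psi}_{\lambda}(u_{\lambda})\,\partial_{H}\phi_{\mu}(u_{\lambda})\,dx$ the paper at this stage simply uses H\"older together with the Lipschitz continuity of $\partial_{H}\overline{\psi}_{\lambda}$ on $L^{r}(\Omega)$ to get the crude bound $C_{\lambda}\sigma$ (a $\lambda$-dependent constant is harmless here since $\lambda$ is fixed), reserving your Lemma \ref{lem-10} plus Young absorption argument for Step 2 of the proof of Theorem \ref{main-theo}, where a $\lambda$-uniform estimate is actually required.
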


\begin{proof}
First, we notice that by Proposition \ref{more}, $\partial _{H}\overline{%
\psi }_{\lambda }$ coincides with the Yosida approximation $(\partial _{H}%
\overline{\psi })_{\lambda }$ of the maximal monotone operator $\partial _{H}%
\overline{\psi }$ (see \eqref{Y-Apr}). Hence, by Proposition \ref{prop-24} $%
\partial _{H}\overline{\psi }_{\lambda }$ is Lipschitz continuous in $%
L^{r}(\Omega )$ as well as in $L^{2}(\Omega )$. Since $\overline{\Phi }%
^{\sigma }$ is proper, convex and lower semi-continuous on $H=L^{2}(\Omega )$
and the mapping $t\mapsto f(t)$ belongs to $L^{2}((0,T);L^{2}(\Omega ))$, we
can exploit \cite[Proposition 3.12 and Theorem 3.6]{Br73} to infer that for
every $u_{0}\in \overline{D(\overline{\Phi }^{\sigma })}=L^{2}\left( \Omega
\right) $, the Cauchy problem \eqref{cp-sigma-lambda} has a unique strong
solution $u_{\lambda }$. Moreover, it holds%
\begin{equation*}
\sqrt{t}\frac{du_{\lambda }}{dt}(t)\in L^{2}((0,T);L^{2}(\Omega )).
\end{equation*}%
In particular, if $u_{0}\in D(\overline{\Phi }^{\sigma })$ we have%
\begin{equation*}
u_{\lambda }\in C_{w}([0,T];V)\cap W^{1,2}((0,T);L^{2}(\Omega ))\;\mbox{ and }%
\;u_{\lambda }(t)\in V,\;\;\phi (u_{\lambda }(t))\leq \sigma ,
\end{equation*}%
for all $t\in \lbrack 0,T]$. Hence, the function $t\mapsto \Phi ^{\sigma
}(u_{\lambda }(t))$ is absolutely continuous on $[0,T]$ and the first
statement of (\ref{esse}) also follows. It remains to show the second part
of (\ref{esse}). To this end, multiplying \eqref{cp-sigma-lambda} by $%
\partial _{H}\phi _{\mu }(u_{\lambda }(t))$, $\mu >0$, then employing the
chain rule formula \eqref{eq-chain} (see Proposition \ref{prop-chain}), we
obtain%
\begin{align}
\frac{d}{dt}\phi _{\mu }(u_{\lambda }(t))& +\int_{\Omega }\partial _{H}\phi
_{\mu }(u_{\lambda }(t))\left[ f(t)-\frac{du_{\lambda }}{dt}(t)+\partial _{H}%
\overline{\psi }_{\lambda }(u_{\lambda }(t))\right] dx  \label{eq-326} \\
& =\int_{\Omega }\partial _{H}\overline{\psi }_{\lambda }(u_{\lambda
}(t))\partial _{H}\phi _{\mu }(u_{\lambda }(t))dx+\int_{\Omega }f(t)\partial
_{H}\phi _{\mu }(u_{\lambda }(t))dx.  \notag
\end{align}%
Let $v_{\lambda ,\mu }(t):=|J_{\mu }^{\phi }u_{\lambda }(t)|^{\frac{r-2}{p}%
}J_{\mu }^{\phi }u_{\lambda }(t)$. Note that $v_{\lambda ,\mu }(t)\in
W_{0}^{s,p}(\bOm )\cap L^{r}(\Omega )=V$. Recall that by Lemma \ref{lem-9}
and virtue of \eqref{eq-12} it holds%
\begin{align}
&\beta \frac{C_{N,p,s}}{2}\int_{\mathbb{R}^{N}}\int_{\mathbb{R}^{N}}\frac{%
|v_{\lambda ,\mu }(x,t)-v_{\lambda ,\mu }(y,t)|^{p}}{|x-y|^{N+sp}}dxdy\notag\\
\leq&
\langle \partial _{H}\overline{\Phi }^{\sigma }(u_{\lambda }(t)),\partial
_{H}\phi _{\mu }(u_{\lambda }(t))\rangle _{V^{\star },V}.  \label{326bis}
\end{align}%
By Proposition \ref{prop-24}, $\partial _{H}\overline{\psi }_{\lambda }$ is
Lipschitz continuous from $L^{r}(\Omega )$ to $L^{r}(\Omega )$. Hence, H\"{o}%
lder's inequality together with Proposition \ref{more} yields the estimate%
\begin{align}\label{326tris}
\int_{\Omega }\partial _{H}\overline{\psi }_{\lambda }(u_{\lambda
}(t))\partial _{H}\phi _{\mu }(u_{\lambda }(t))\;dx& \leq \Vert \partial _{H}%
\overline{\psi }_{\lambda }(u_{\lambda }(t))\Vert _{L^{r}(\Omega )}\Vert
\partial _{H}\phi _{\mu }(u_{\lambda }(t))\Vert _{L^{r^{\prime }}(\Omega )}
\notag\\
& \leq C_{\lambda }\phi (u_{\lambda }(t))\leq C_{\lambda }\sigma ,  `
\end{align}%
for some constant $C_{\lambda }>0$ depending only on $\lambda >0$ but not on
$\mu >0$. Moreover, exploiting H\"{o}lder's inequality once again, one can
find a constant $C>0$ such that
\begin{equation*}
\int_{\Omega }f(t)\partial _{H}\phi _{\mu }(u_{\lambda }(t))dx\leq C\sigma ^{%
\frac{1}{r^{\prime }}}\Vert f(t)\Vert _{L^{r}(\Omega )}.
\end{equation*}%
Combining \eqref{326bis} together with (\ref{326tris}), then integrating %
\eqref{eq-326} over $(0,t)$, and using Proposition \ref{more} once more, we
deduce%
\begin{align}
& \phi _{\mu }(u_{\lambda }(t))+\beta \int_{0}^{t}\frac{C_{N,p,s}}{2}\int_{%
\mathbb{R}^{N}}\int_{\mathbb{R}^{N}}\frac{|v_{\lambda ,\mu }(x,\tau
)-v_{\lambda ,\mu }(y,\tau )|^{p}}{|x-y|^{N+sp}}dxdyd\tau  \label{eq-327} \\
\leq & \phi (u_{0})+C_{\lambda }\sigma T+\beta \sigma ^{\frac{1}{r^{\prime }}%
}\int_{0}^{T}\Vert f(\tau )\Vert _{L^{r}(\Omega )}d\tau ,  \notag
\end{align}%
for all $t\in \lbrack 0,T]$. Now passing to the limit as $\mu \rightarrow
0^{+}$ in the foregoing uniform estimate, by virtue of Proposition \ref{more}%
, we obtain%
\begin{equation*}
J_{\mu }^{\phi }\left( u_{\lambda }\right) \rightarrow u_{\lambda }\;\;%
\mbox{ strongly in }\;C([0,T];L^{2}(\Omega )).
\end{equation*}%
Finally, since $2(p+r-2)/p\leq r$, it follows from \eqref{eq-327} that as $%
\mu \rightarrow 0^{+}$,%
\begin{equation*}
\begin{cases}
J_{\mu }^{\phi }\left( u_{\lambda }\right) \rightarrow u_{\lambda }\; &
\text{weakly-star in }L^{\infty }((0,T);L^{r}(\Omega )), \\
v_{\lambda ,\mu }\rightarrow v_{\lambda }\;\; & \text{weakly-star in}%
\;L^{\infty }((0,T);L^{2}(\Omega )), \\
v_{\lambda ,\mu }\rightarrow v_{\lambda } & \text{weakly in}%
\;L^{p}((0,T);W_{0}^{s,p}(\bOm)),%
\end{cases}%
\end{equation*}%
where $v_{\lambda }=|u_{\lambda }|^{\frac{r-2}{p}}u_{\lambda }.$ The proof
is finished.
\end{proof}

Having obtained a solution to the regularized problem (\ref{cp-sigma-lambda}%
), we can now pass to the limit as $\lambda \rightarrow 0^{+}$ to deduce a
solution to problem (\ref{cp-sigma}). We have the following.

\begin{proposition}
Let $0<s<1$, $p,q,r\in \lbrack 2,\infty )$ satisfy $p<q$ and \eqref{cond-7}.
Let $T>0$ be fixed, $u_{0}\in D(\Phi )$ and $f\in C^{1}([0,T];V)$. Then
there exists a unique function $u\in C_{w}([0,T];V)\cap
W^{1,2}((0,T);L^{2}(\Omega ))$ which is a strong solution of problem %
\eqref{cp-sigma} on $(0,T)$.
\end{proposition}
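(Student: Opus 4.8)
The plan is to obtain the solution of \eqref{cp-sigma} as the limit, as $\lambda\to0^+$, of the solutions $u_\lambda$ of the regularized problems \eqref{cp-sigma-lambda} produced by Proposition \ref{pro-ap-pr}, and then to prove uniqueness by a Gronwall argument. First I would derive a priori bounds uniform in $\lambda$. Multiplying \eqref{cp-sigma-lambda} by $du_\lambda/dt$ and integrating in time — using the chain rule of Proposition \ref{prop-chain} for $t\mapsto\overline\Phi^\sigma(u_\lambda(t))$, which is absolutely continuous by Proposition \ref{pro-ap-pr}, and the Fr\'echet differentiability of $\overline\psi_\lambda$ (Proposition \ref{more}) — yields
\[
\int_0^t\Big\|\tfrac{du_\lambda}{d\tau}\Big\|_{L^2(\Omega)}^2\,d\tau+\overline\Phi^\sigma(u_\lambda(t))=\overline\psi_\lambda(u_\lambda(t))-\overline\psi_\lambda(u_0)+\Phi(u_0)+\int_0^t\big(f,\tfrac{du_\lambda}{d\tau}\big)_{L^2(\Omega)}\,d\tau .
\]
Since $\overline\psi_\lambda(u_\lambda(t))\le\overline\psi(u_\lambda(t))=\psi(u_\lambda(t))$ and $\sup_t\phi(u_\lambda(t))\le\sigma$ by Proposition \ref{pro-ap-pr}, the key bound \eqref{eq17} of Remark \ref{rem-43} gives $\overline\psi_\lambda(u_\lambda(t))\le\tfrac12\overline\Phi^\sigma(u_\lambda(t))+\mathcal F(\sigma)$; discarding $-\overline\psi_\lambda(u_0)\le0$ and applying Young's inequality to the $f$-term, one absorbs the offending terms and concludes that $\overline\Phi^\sigma(u_\lambda(\cdot))$ is bounded in $L^\infty(0,T)$ and $du_\lambda/dt$ in $L^2(0,T;L^2(\Omega))$, uniformly in $\lambda$. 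By \eqref{eq18} this makes $u_\lambda$ bounded in $L^\infty(0,T;V)$, hence — by the embedding $V\hookrightarrow L^q(\Omega)$ valid under \eqref{cond-7} — in $L^\infty(0,T;L^q(\Omega))$; consequently $\psi(u_\lambda)$, $\overline\psi_\lambda(u_\lambda)$ and (since $|J_\lambda^{\overline\psi}u_\lambda|\le|u_\lambda|$ a.e.) $\partial_H\overline\psi_\lambda(u_\lambda)=|J_\lambda^{\overline\psi}u_\lambda|^{q-2}J_\lambda^{\overline\psi}u_\lambda$ are bounded in $L^\infty(0,T;L^{q'}(\Omega))$, and from the equation $\partial_H\overline\Phi^\sigma(u_\lambda)=f-du_\lambda/dt+\partial_H\overline\psi_\lambda(u_\lambda)$ is bounded in $L^{p'}(0,T;V^\star)$.

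Passing to a subsequence, $u_\lambda\rightharpoonup u$ weakly-$\ast$ in $L^\infty(0,T;V)$ (hence weakly in $L^p(0,T;V)$), $du_\lambda/dt\rightharpoonup du/dt$ weakly in $L^2(0,T;L^2(\Omega))$, and $\partial_H\overline\Phi^\sigma(u_\lambda)\rightharpoonup\xi$ weakly in $L^{p'}(0,T;V^\star)$. Since $W_0^{s,p}(\Omega)\hookrightarrow\hookrightarrow L^2(\Omega)$ (note $p^\star>2$ when $p\ge2$) and $du_\lambda/dt$ is bounded in $L^2(0,T;L^2(\Omega))\subset L^{p'}(0,T;V^\star)$, the Aubin--Lions--Simon lemma gives $u_\lambda\to u$ strongly in $C([0,T];L^2(\Omega))$, and interpolating with the $L^\infty(0,T;L^q)$-bound, $u_\lambda\to u$ strongly in $L^q(0,T;L^q(\Omega))$. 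Thus $u(0)=u_0$ (as $u_\lambda(0)=u_0$), and $\phi(u(t))\le\liminf_\lambda\phi(u_\lambda(t))\le\sigma$ for every $t$, i.e.\ $u(t)\in V_\sigma$. Moreover $\|u_\lambda-J_\lambda^{\overline\psi}u_\lambda\|_{L^2(\Omega)}^2\le2\lambda\,\overline\psi_\lambda(u_\lambda)$ together with the uniform bound on $\overline\psi_\lambda(u_\lambda)$ forces $J_\lambda^{\overline\psi}u_\lambda\to u$ in $L^2(0,T;L^2(\Omega))$, hence a.e., so $\partial_H\overline\psi_\lambda(u_\lambda)\to|u|^{q-2}u$ a.e.; being bounded in the reflexive space $L^{q'}(0,T;L^{q'}(\Omega))$ it converges there weakly, and by Proposition \ref{subdif} the limit is $\partial_V\psi(u)=|u|^{q-2}u$. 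Passing to the limit in the weak form of \eqref{cp-sigma-lambda} then gives $du/dt+\xi-\partial_V\psi(u)=f$ in $V^\star$ a.e.

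The remaining, and hardest, point is to show $\xi(t)\in\partial_V\Phi^\sigma(u(t))$ a.e., which I would establish by Minty's monotonicity trick. The functional $w\mapsto\int_0^T\Phi^\sigma(w(t))\,dt$ is proper, convex and lower semicontinuous on $L^p(0,T;V)$, so its subdifferential $\mathcal A$ — whose elements are exactly the $L^{p'}(0,T;V^\star)$-valued selections $g(t)\in\partial_V\Phi^\sigma(w(t))$ — is maximal monotone; since $\partial_H\overline\Phi^\sigma(u_\lambda)\in\mathcal Au_\lambda$ by \eqref{eq-12}, with $u_\lambda\rightharpoonup u$ and $\partial_H\overline\Phi^\sigma(u_\lambda)\rightharpoonup\xi$, it suffices to verify $\limsup_\lambda\int_0^T\langle\partial_H\overline\Phi^\sigma(u_\lambda),u_\lambda\rangle_{V^\star,V}\,dt\le\int_0^T\langle\xi,u\rangle_{V^\star,V}\,dt$. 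Writing $\langle\partial_H\overline\Phi^\sigma(u_\lambda),u_\lambda\rangle=(f,u_\lambda)-(\tfrac{du_\lambda}{dt},u_\lambda)+(\partial_H\overline\psi_\lambda(u_\lambda),u_\lambda)$ and integrating: the $f$-term converges by weak $L^2$-convergence, the $\overline\psi_\lambda$-term converges by the weak $L^{q'}$ / strong $L^q$ convergences, and $\int_0^T(\tfrac{du_\lambda}{dt},u_\lambda)\,dt=\tfrac12\|u_\lambda(T)\|_{L^2}^2-\tfrac12\|u_0\|_{L^2}^2$ has $\liminf$ at least $\tfrac12\|u(T)\|_{L^2}^2-\tfrac12\|u_0\|_{L^2}^2$ by weak lower semicontinuity at $t=T$; comparing with the limit equation tested against $u$ delivers the desired $\limsup$ bound. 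Hence $\xi\in\mathcal Au$ and $u$ solves \eqref{cp-sigma}; the regularity $u\in C_w([0,T];V)\cap W^{1,2}(0,T;L^2(\Omega))$ follows from the bounds above (weak continuity into $V$ from $u\in L^\infty(0,T;V)\cap C([0,T];L^2(\Omega))$ and reflexivity of $V$).

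For uniqueness, if $u_1,u_2$ are two such solutions, I would subtract the equations, pair with $w:=u_1-u_2$ and use $\tfrac{d}{dt}\tfrac12\|w\|_{L^2(\Omega)}^2=(\tfrac{dw}{dt},w)_{L^2(\Omega)}$ (legitimate since $w\in W^{1,2}(0,T;L^2(\Omega))$). Monotonicity of $\partial_V\chi_{V_\sigma}$ and the inequality $(|a|^{p-2}a-|b|^{p-2}b)(a-b)\ge c_p|a-b|^p$ for $p\ge2$ give $\langle\partial_V\Phi^\sigma(u_1)-\partial_V\Phi^\sigma(u_2),w\rangle\ge c_p\||w|\|_{W_0^{s,p}(\Omega)}^p$, while the perturbation term $\langle|u_1|^{q-2}u_1-|u_2|^{q-2}u_2,w\rangle$ is controlled, via H\"older with the uniform $L^q$-bound on $u_1,u_2$ and the Gagliardo--Nirenberg inequality \eqref{Gag-Nir-2}, by a quantity of the form $\tfrac12 c_p\||w|\|_{W_0^{s,p}(\Omega)}^p+C\|w\|_{L^2(\Omega)}^2$, the admissibility of the interpolation exponents being precisely what \eqref{cond-7} secures (compare the exponent computation \eqref{eq16} in the proof of Lemma \ref{lem-33}); absorbing the gradient term and invoking Gronwall with $w(0)=0$ yields $u_1=u_2$. \textbf{The main obstacle} is the Minty step of the third paragraph: passing to the limit in the nonlinear nonlocal operator $\partial_V\Phi^\sigma$, for which the $\limsup$ inequality rests on the strong $L^2$- and $L^q$-convergence of $u_\lambda$ combined with the energy identity, the requisite structural properties of $(-\Delta)_p^s$ and of the constrained functional being supplied by Lemmas \ref{comp}, \ref{lem-9}, \ref{lem-33} and Remark \ref{rem-43}.
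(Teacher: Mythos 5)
Your existence argument is essentially the paper's: pass to the limit $\lambda\to 0^{+}$ in the regularized problems \eqref{cp-sigma-lambda} supplied by Proposition \ref{pro-ap-pr}, with the key a priori estimate obtained by testing against $du_{\lambda}/dt$ and absorbing $\overline{\psi}_{\lambda}(u_{\lambda})$ through \eqref{eq17} and \eqref{eq18}; this reproduces the paper's \eqref{eq21}--\eqref{eq23}. Two sub-steps differ in execution but not in substance: you obtain the strong convergence of the nonlinearity via Aubin--Lions plus the a.e.\ convergence of $J_{\lambda}^{\overline{\psi}}u_{\lambda}$, where the paper invokes the compactness of $\partial_{V}\psi$ from Remark \ref{rem-43}(c); and you identify the weak limit of $\partial_{H}\overline{\Phi}^{\sigma}(u_{\lambda})$ by a full Minty argument on $L^{p}(0,T;V)$, whereas the paper asserts the membership $g\in\partial_{V}\Phi^{\sigma}(u(\cdot))$ rather tersely. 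Your Minty step (the $\limsup$ inequality via the energy identity at $t=T$) is correct and in fact fills in a detail the paper glosses over. This part is fine.

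The genuine gap is in the uniqueness step --- which, to be fair, the paper itself does not prove either (it only asserts uniqueness). You bound the perturbation term by a constant times $\Vert u_{1}\Vert_{L^{q}}^{q-2}+\Vert u_{2}\Vert_{L^{q}}^{q-2}$ times $\Vert w\Vert_{L^{q}(\Omega)}^{2}$ and then interpolate $\Vert w\Vert_{L^{q}(\Omega)}$ between $W_{0}^{s,p}(\Omega)$ and $L^{2}(\Omega)$. That interpolation exists only if $q\le p^{\star}$, and the Young absorption into $\varepsilon\Vert |w\Vert|_{W_{0}^{s,p}(\Omega)}^{p}$ requires $q<p^{\star}$ strictly when $p=2$. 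But \eqref{cond-7} forces $q<p^{\star}$ only in the case $r\le q$ (as the paper notes right after \eqref{cond-7}); when $q<r$ one may well have $p^{\star}\le q<\frac{N+sr}{N}p$, e.g.\ $N=2$, $s=1/2$, $p=2$, $q=5$, $r=7$ gives $p^{\star}=4<q$. In that regime $\Vert w\Vert_{L^{q}}$ cannot be controlled by $W_{0}^{s,p}$ and $L^{2}$ at all, and interpolating between $L^{r}$ and $L^{2}$ instead (using only the uniform $L^{\infty}(0,T;L^{r})$ bound on $u_{1},u_{2}$) yields $\Vert w\Vert_{L^{2}}^{2(1-\theta)}$ with $\theta>0$, an Osgood-type inequality for which Gronwall does not give $w\equiv 0$. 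The estimate must instead place $(|u_{1}|+|u_{2}|)^{q-2}$ in $L^{r/(q-2)}(\Omega)$, leaving $\Vert w\Vert_{L^{2r/(r-q+2)}(\Omega)}^{2}$, and one must then verify $2r/(r-q+2)<p^{\star}$ together with the absorption condition on the resulting interpolation exponent; this is not the computation \eqref{eq16} you cite, which interpolates against $L^{r}$ rather than $L^{2}$ and only treats the case $r<q$. Until that exponent bookkeeping is carried out, uniqueness is not established on the full range of parameters permitted by \eqref{cond-7}.
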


\begin{proof}
Let $u_{0}\in D(\Phi )$ and $f\in C^{1}([0,T];V)$. Let $\lambda >0$ and let $%
u_{\lambda }$ be the unique strong solution of \eqref{cp-sigma-lambda} which
exists by Proposition \ref{pro-ap-pr}. In the subsequent proofs, $C>0$ will
always denote a constant that is independent of $t$, $f$, $\lambda $, which
only depends on the other structural parameters of the problem. Such a
constant may vary even from line to line. We multiply \eqref{cp-sigma-lambda}
by $\frac{du_{\lambda }(t)}{dt}$ and we integrate the resulting identity
over $(0,t)$. Using \eqref{eq17}, \eqref{eq18} and Proposition \ref%
{prop-chain}, we get that there exists a constant $C>0$ such that
\begin{equation}
\int_{0}^{T}\left\Vert \frac{du_{\lambda }}{dt}(t)\right\Vert _{L^{2}(\Omega
)}^{2}dt+\sup_{t\in \lbrack 0,T]}\Phi ^{\sigma }(u_{\lambda }(t))\leq C.
\label{eq21}
\end{equation}%
The estimates \eqref{eq18} and \eqref{eq21} imply that
\begin{equation}
\sup_{t\in \lbrack 0,T]}\Vert u_{\lambda }(t)\Vert _{V}\leq C.
\label{eq22-2}
\end{equation}%
Furthermore, we also have there exists a constant $C>0$ such that
\begin{equation}
\sup_{t\in \lbrack 0,T]}\Vert J_{\lambda }^{\overline{\psi }}u_{\lambda
}(t)\Vert _{V}\leq C  \label{eq22}
\end{equation}%
on account of (\ref{eq20}) and (\ref{eq21}). Let now%
\begin{equation*}
g_{\lambda }:=f(t)-\frac{du_{\lambda }}{dt}(t)+\partial _{H}\overline{\psi }%
_{\lambda }(u_{\lambda }(t))\in \partial _{H}\overline{\Phi }^{\sigma
}(u_{\lambda }(t)).
\end{equation*}%
Then, passing to a subsequence of $\{\lambda \}$ if necessary, we get that
as $\lambda \rightarrow 0^{+}$,
\begin{equation}
\begin{cases}
u_{\lambda }\rightarrow u,\;\;J_{\lambda }^{\overline{\psi }}\left(
u_{\lambda }\right) \rightarrow u\;\; & \mbox{
weakly star in }\;L^{\infty }((0,T);V), \\
u_{\lambda }\rightarrow u,\;\;J_{\lambda }^{\overline{\psi }}\left(
u_{\lambda }\right) \rightarrow u & \mbox{ weakly  in }\;W^{1,2}((0,T);L^{2}(%
\Omega )) \\
\partial _{H}\overline{\psi }_{\lambda }(u_{\lambda }(\cdot ))\rightarrow
\partial _{V}\psi (u(\cdot )) & \mbox{ strongly in }\;C([0,T];L^{q^{\prime
}}(\Omega )), \\
g_{\lambda }\rightarrow g\in \partial _{V}\Phi ^{\sigma }(u(\cdot )) & %
\mbox{ weakly  in }\;L^{2}((0,T);V^{\star }).%
\end{cases}
\label{eq23}
\end{equation}%
The first two foregoing convergence properties follow from \eqref{eq21}, %
\eqref{eq22} and \eqref{eq22-2}. The third convergence property follows from %
\eqref{eq21} in light of Remark \ref{rem-43}, part (c). On the other hand,
the last convergence property follows from the second and third of (\ref%
{eq23}) on the account of the fact that $L^{2}(\Omega )$ and $L^{q^{\prime
}}(\Omega )$ are both continuously embedded into $V^{\star }$. Clearly, %
\eqref{eq23} also yields%
\begin{equation*}
g(t)=f(t)-\frac{du\left( t\right) }{dt}+\partial _{V}\psi (u(t))\text{, a.e.
}t\in (0,T).
\end{equation*}%
Finally, since $u_{\lambda }(t)\rightarrow u_{0}$ strongly in $L^{r}(\Omega
) $ as $t\rightarrow 0^{+}$, we may conclude that the limit function $u$ is
the unique strong solution to the auxiliary problem \eqref{cp-sigma} on $%
(0,T)$. The proof of the proposition is finished.
\end{proof}

\section{Proof of the main results}

\label{pro-main-theo} In this section we prove the main results stated in
Section \ref{sec-main}.

\subsection{Proof of Theorem \protect\ref{main-theo}}

We can now complete the proof of the first main result of the article. This
program will be divided into several steps.\newline

\noindent \textbf{Step 1 (}\textbf{Additional uniform estimates}\textbf{)}.
We give further (uniform) estimates of solutions to the regularized problem %
\eqref{cp-sigma-lambda} that will be needed in the sequel. Recall that $%
p,q,r\in \lbrack 2,\infty )$ satisfy $p<q$ and \eqref{cond-7}. Let $\lambda
>0$ and consider the unique strong solution $u_{\lambda }$ to %
\eqref{cp-sigma-lambda}. Multiplying \eqref{cp-sigma-lambda} by $u_{\lambda
}(t)$, integrating the resulting identity over $(0,t)$ and using \eqref{eq18}%
, we deduce%
\begin{align}
& \frac{1}{2}\Vert u_{\lambda }(t)\Vert _{L^{2}(\Omega )}^{2}+\int_{0}^{t}%
\overline{\Phi }^{\sigma }(u_{\lambda }(\tau ))d\tau  \label{eq33} \\
\leq & \frac{1}{2}\Vert u_{0}\Vert _{L^{2}(\Omega )}^{2}+\int_{0}^{t}\Vert
\partial _{H}\overline{\psi }(u_{\lambda }(\tau ))\Vert _{L^{q^{\prime
}}(\Omega )}\Vert u_{\lambda }(\tau )\Vert _{L^{q}(\Omega )}d\tau  \notag \\
&+\int_{0}^{T}\Vert f(\tau )\Vert _{V^{\star }}\Vert u_{\lambda }(\tau
)\Vert _{V}d\tau  \notag \\
\leq & \frac{1}{2}\Vert u_{0}\Vert _{L^{2}(\Omega )}^{2}+C\int_{0}^{t}\psi
(u_{\lambda }(\tau ))d\tau +C\int_{0}^{T}\Vert f(\tau )\Vert _{V^{\star
}}^{p^{\prime }}d\tau  \notag \\
&+\frac{1}{2}\int_{0}^{t}\overline{\Phi }^{\sigma }(u_{\lambda }(\tau
))d\tau +\frac{T}{2}\sigma ^{\frac{p}{r}}.  \notag
\end{align}%
Lemma \ref{lem-33} together with \eqref{eq17} thus gives%
\begin{align*}
&\sup_{t\in \lbrack 0,T]}\Vert u_{\lambda }(t)\Vert _{L^{2}(\Omega
)}^{2}+\int_{0}^{T}\overline{\Phi }^{\sigma }(u_{\lambda }(\tau ))d\tau \\
&\leq C\left( \Vert u_{0}\Vert _{L^{2}(\Omega )}^{2}+T\mathcal{F}(\sigma
)+\int_{0}^{T}\Vert f(\tau )\Vert _{V^{\star }}^{p^{\prime }}d\tau \right) .
\end{align*}%
Next, multiplying \eqref{cp-sigma-lambda} by $t\frac{du_{\lambda }(t)}{dt}$
and using the fact that
\begin{align*}
\int_{\Omega }f(t)t\frac{du_{\lambda }(t)}{dt}dx=&\frac{d}{dt}\left(
t\int_{\Omega }f(t)u_{\lambda }(t)dx\right) -\int_{\Omega }f(t)u_{\lambda
}(t)dx \\
&-t\int_{\Omega }\frac{df}{dt}(t)u_{\lambda }(t)dx,
\end{align*}%
we obtain%
\begin{align*}
& t\left\Vert \frac{du_{\lambda }(t)}{dt}\right\Vert _{L^{2}(\Omega )}^{2}+%
\frac{d}{dt}\left[ t\overline{\Phi }^{\sigma }(u_{\lambda }(t))\right] -%
\overline{\Phi }^{\sigma }(u_{\lambda }(t)) \\
\leq & \frac{d}{dt}\left[ t\overline{\psi }_{\lambda }(u_{\lambda }(t))%
\right] -\overline{\psi }_{\lambda }(u_{\lambda }(t))+\frac{d}{dt}\left(
t\int_{\Omega }f(t)u_{\lambda }(t)dx\right) \\
& -\int_{\Omega }f(t)u_{\lambda }(t)dx-t\int_{\Omega }\frac{df}{dt}%
(t)u_{\lambda }(t)dx.
\end{align*}%
Integrating the foregoing inequality over $(0,t)$ and using \eqref{eq17} and %
\eqref{eq18} once more, we readily see that%
\begin{align}
& \int_{0}^{t}\tau \left\Vert \frac{du_{\lambda }(\tau )}{d\tau }\right\Vert
_{L^{2}(\Omega )}^{2}d\tau +t\overline{\Phi }^{\sigma }(u_{\lambda
}(t))+\int_{0}^{t}\overline{\psi }_{\lambda }(u_{\lambda }(\tau ))d\tau
\label{eq39b} \\
\leq & t\overline{\psi }_{\lambda }(u_{\lambda }(t))+\int_{0}^{t}\overline{%
\Phi }^{\sigma }(u_{\lambda }(\tau ))d\tau +t\int_{\Omega }f(t)u_{\lambda
}(t)dx  \notag \\
& -\int_{0}^{t}\int_{\Omega }f(\tau )u_{\lambda }(\tau )dxd\tau
-\int_{0}^{t}\tau \int_{\Omega }\frac{df}{d\tau }(t)u_{\lambda }(\tau
)dxd\tau  \notag \\
\leq &\frac{t}{2}\overline{\Phi }^{\sigma }(u_{\lambda }(t))+C\int_{0}^{T}%
\overline{\Phi }^{\sigma }(u_{\lambda }(\tau ))d\tau +C\sup_{\tau \in
\lbrack 0,T]}\tau \Vert f(\tau )\Vert _{V^{\star }}^{p^{\prime }}  \notag \\
& +\frac{t}{4}\overline{\Phi }^{\sigma }(u_{\lambda }(t))+C\int_{0}^{t}\Vert
f(\tau )\Vert _{V^{\star }}^{p^{\prime }}d\tau +\int_{0}^{T}\left\Vert \tau
\frac{df}{d\tau }(\tau )\right\Vert _{V^{\star }}^{p^{\prime }}d\tau +T%
\mathcal{F}(\sigma ).  \notag
\end{align}%
On the other hand, using the fact that
\begin{equation*}
\sup_{t\in \lbrack 0,T]}t\Vert f(t)\Vert _{V^{\star }}^{p^{\prime }}\leq
C\left( \int_{0}^{T}\Vert f(t)\Vert _{V^{\star }}^{p^{\prime
}}\;dt+\int_{0}^{T}\left\Vert t\frac{df}{dt}(t)\right\Vert _{V^{\star
}}^{p^{\prime }}\;dt\right) ,
\end{equation*}%
we further get from (\ref{eq39b}) that
\begin{align}
& \int_{0}^{T}t\left\Vert \frac{du_{\lambda }}{dt}(t)\right\Vert
_{L^{2}(\Omega )}^{2}dt+\sup_{t\in \lbrack 0,T]}t\overline{\Phi }^{\sigma
}(u_{\lambda }(t))  \label{eq40} \\
\leq & C\left( \Vert u_{0}\Vert _{L^{2}(\Omega )}^{2}+T\mathcal{F}(\sigma
)+\int_{0}^{T}\Vert f(t)\Vert _{V^{\star }}^{p^{\prime
}}dt+\int_{0}^{T}\left\Vert t\frac{df}{dt}(t)\right\Vert _{V^{\star
}}^{p^{\prime }}dt\right) .  \notag
\end{align}%
Letting $\lambda \rightarrow 0^{+},$ from \eqref{eq23} and \eqref{eq33} we
infer%
\begin{align}
&\sup_{t\in \lbrack 0,T]}\Vert u(t)\Vert _{L^{2}(\Omega
)}^{2}+\int_{0}^{T}\Phi ^{\sigma }(u(\tau ))d\tau  \notag \\
&\leq C\left( \Vert u_{0}\Vert _{L^{2}(\Omega )}^{2}+T\mathcal{F}(\sigma
)+\int_{0}^{T}\Vert f(t)\Vert _{V^{\star }}^{p^{\prime }}dt\right)
\label{eq41}
\end{align}%
and%
\begin{align}
& \int_{0}^{T}t\left\Vert \frac{du}{dt}(t)\right\Vert _{L^{2}(\Omega
)}^{2}dt+\sup_{t\in \lbrack 0,T]}t\Phi ^{\sigma }(u(t))  \label{eq42} \\
\leq & C\left( \Vert u_{0}\Vert _{L^{2}(\Omega )}^{2}+T\mathcal{F}(\sigma
)+\int_{0}^{T}\Vert f(t)\Vert _{V^{\star }}^{p^{\prime
}}dt+\int_{0}^{T}\left\Vert t\frac{df}{dt}(t)\right\Vert _{V^{\star
}}^{p^{\prime }}dt\right) ,  \notag
\end{align}%
for some constant $C>0$ independent of $t,$ $f$ and $\lambda >0$.\newline

\noindent \textbf{Step 2 (}\textbf{Passage to limit}\textbf{)}. Let $T>0$ be
fixed, $u_{0}\in D(\Phi )$ and $f\in C^{1}([0,T];V)$. Let $\phi _{\mu }$ be
the Moreau-Yosida approximation of $\phi $ for $\mu >0$ and let $u_{\lambda }
$ ($\lambda >0$) be the unique strong solution to \eqref{cp-sigma-lambda}.
Multiplying \eqref{cp-sigma-lambda} by $\partial _{H}\phi _{\mu }(u_{\lambda
}(t))$ and using the chain rule formula (see Proposition \ref{prop-chain}),
we have%
\begin{align*}
& \frac{d}{dt}\phi _{\mu }(u_{\lambda }(t))+\int_{\Omega }g_{\lambda
}(t)\partial _{H}\phi _{\mu }(u_{\lambda }(t))dx \\
& =\int_{\Omega }\partial _{H}\overline{\psi }_{\mu }(u_{\lambda
}(t))\partial _{H}\phi _{\mu }(u_{\lambda }(t))dx+\int_{\Omega }f(t)\partial
_{H}\phi _{\mu }(u_{\lambda }(t))dx.
\end{align*}%
Let $v_{\lambda ,\mu }\left( t\right) :=|J_{\mu }^{\Phi }u_{\lambda }\left(
t\right) |^{\frac{r-2}{p}}J_{\mu }^{\Phi }u_{\lambda }\left( t\right) $ for
a.e. $t\in (0,T)$ and note that $v_{\lambda ,\mu }(t)\in W_{0}^{s,p}(%
\overline{\Omega })$ on account of Lemma \ref{lem-9}. Inserting the
estimates of Lemmas \ref{lem-9} and \ref{lem-10} into the foregoing
identity, we readily have%
\begin{align}
& \frac{d}{dt}\Big[\phi _{\mu }(u_{\lambda }(t))\Big]+\frac{\beta C_{N,p,s}}{%
2}\int_{\mathbb{R}^{N}}\int_{\mathbb{R}^{N}}\frac{|v_{\lambda ,\mu
}(x,t)-v_{\lambda ,\mu }(y,t)|^{p}}{|x-y|^{N+sp}}dxdy  \label{eq30} \\
\leq & \mathcal{F}(\sigma )\left[ \frac{C_{N,p,s}}{2}\int_{\mathbb{R}%
^{N}}\int_{\mathbb{R}^{N}}\frac{|v_{\lambda ,\mu }(x,t)-v_{\lambda ,\mu
}(y,t)|^{p}}{|x-y|^{N+sp}}dxdy\right] ^{1-\varepsilon }  \notag \\
& +\int_{\Omega }f(t)\partial _{H}\phi _{\mu }(u_{\lambda }(t))dx.  \notag
\end{align}%
H\"{o}lder's inequality and \eqref{eq19} allow us to deduce%
\begin{align}
\int_{\Omega }f(t)\partial _{H}\phi _{\mu }(u_{\lambda }(t))\;dx\leq & \Vert
f(t)\Vert _{L^{r}(\Omega )}\Vert \partial _{H}\phi _{\mu }(u_{\lambda
}(t))\Vert _{L^{r^{\prime }}(\Omega )}  \label{eq30-2} \\
\leq & C\Vert f(t)\Vert _{L^{r}(\Omega )}\phi (J_{\lambda }^{\phi
}(u_{\lambda }(t)))^{\frac{1}{r^{\prime }}}  \notag \\
\leq & C\sigma ^{\frac{1}{r^{\prime }}}\Vert f(t)\Vert _{L^{r}(\Omega )}.
\notag
\end{align}%
Integrating \eqref{eq30} over $(0,t)$, using \eqref{eq30-2}, and recalling
that the function $v_{\lambda ,\mu }\in L^{p}((0,T);W_{0}^{s,p}(\overline{%
\Omega }))$, there holds%
\begin{align}
& \phi _{\mu }(u_{\lambda }(t))+\beta \int_{0}^{t}\frac{C_{N,p,s}}{2}\int_{%
\mathbb{R}^{N}}\int_{\mathbb{R}^{N}}\frac{|v_{\lambda ,\mu }(x,\tau
)-v_{\lambda ,\mu }(y,\tau )|^{p}}{|x-y|^{N+sp}}dxdyd\tau   \label{eq31} \\
\leq & \phi _{\mu }(u_{0})+\mathcal{F}(\sigma )\int_{0}^{t}\left[ \frac{%
C_{N,p,s}}{2}\int_{\mathbb{R}^{N}}\int_{\mathbb{R}^{N}}\frac{|v_{\lambda
,\mu }(x,\tau )-v_{\lambda ,\mu }(y,\tau )|^{p}}{|x-y|^{N+sp}}dxdy\right]
^{1-\varepsilon }d\tau   \notag \\
& +C\sigma ^{\frac{1}{r^{\prime }}}\int_{0}^{t}\Vert f(\tau )\Vert
_{L^{r}(\Omega )}d\tau ,  \notag
\end{align}%
for some $C>0$ independent of $\mu >0$. Therefore, since $\phi _{\mu
}(u_{0})\leq \phi (u_{0})$, after passing to a subsequence of $\{\mu \}$ if
necessary, we can infer the existence of a function $w_{\lambda }\in
L^{p}((0,T);W_{0}^{s,p}(\overline{\Omega }))$ such that, as $\mu \rightarrow
0^{+}$,
\begin{equation}
v_{\lambda ,\mu }\rightarrow w_{\lambda }\;\mbox{ weakly in }%
\;L^{p}((0,T);W_{0}^{s,p}(\overline{\Omega })).  \label{eq32}
\end{equation}%
Next, it follows from Proposition \ref{more} that
\begin{equation}
\frac{1}{2\mu }\left\Vert u_{\lambda }(t)-J_{\mu }^{\phi }u_{\lambda
}(t)\right\Vert _{L^{2}(\Omega )}^{2}=\phi _{\mu }(u_{\lambda }(t))-\phi
(J_{\mu }^{\phi }u_{\lambda }(t))\leq \sigma .  \label{E-mu}
\end{equation}%
Estimate \eqref{E-mu} implies that $J_{\mu }^{\phi }u_{\lambda
}(t)\rightarrow u_{\lambda }$ strongly in $C([0,T];L^{2}(\Omega )),$ as $\mu
\rightarrow 0^{+}$. Hence, by \eqref{eq32} we can deduce that $w_{\lambda
}=v_{\lambda }=|u_{\lambda }|^{\frac{r-2}{p}}u_{\lambda }$. Moreover, by
lower semi-continuity it follows that%
\begin{align*}
& \int_{0}^{t}\frac{C_{N,p,s}}{2}\int_{\mathbb{R}^{N}}\int_{\mathbb{R}^{N}}%
\frac{|v_{\lambda }(x,\tau )-v_{\lambda }(y,\tau )|^{p}}{|x-y|^{N+sp}}%
dxdyd\tau  \\
\leq & \liminf_{\mu \rightarrow 0^{+}}\int_{0}^{t}\frac{C_{N,p,s}}{2}\int_{%
\mathbb{R}^{N}}\int_{\mathbb{R}^{N}}\frac{|v_{\lambda ,\mu }(x,\tau
)-v_{\lambda ,\mu }(y,\tau )|^{p}}{|x-y|^{N+sp}}dxdyd\tau .
\end{align*}%
Passing to the limit in \eqref{eq31} with respect to $\mu \rightarrow 0^{+}$%
, and applying Young's inequality, we get that%
\begin{align}
& \phi (u_{\lambda }(t))+\frac{\beta }{2}\int_{0}^{t}\frac{C_{N,p,s}}{2}%
\int_{\mathbb{R}^{N}}\int_{\mathbb{R}^{N}}\frac{|v_{\lambda }(x,\tau
)-v_{\lambda }(y,\tau )|^{p}}{|x-y|^{N+sp}}dxdyd\tau   \label{eq34} \\
\leq & \phi (u_{0})+t\mathcal{F}(\sigma )+t\frac{\beta }{2}+C\sigma ^{\frac{1%
}{r^{\prime }}}\int_{0}^{t}\Vert f(\tau )\Vert _{L^{r}(\Omega )}d\tau ,
\notag
\end{align}%
for all $t\in \lbrack 0,T]$. Next, since the embedding $V\hookrightarrow
L^{q}(\Omega )$ is compact, the application of Ascoli's compactness lemma
together with \eqref{eq23} yields%
\begin{equation*}
u_{\lambda }\rightarrow u\,\mbox{ strongly in }\;C([0,T];L^{q}(\Omega )).
\end{equation*}%
Since $\phi (u_{\lambda }(t))\leq \sigma $ for all $t\in \lbrack 0,T]$ and $%
2\left( p+r-2\right) /p\leq r$, then letting $\lambda \rightarrow 0^{+}$ in %
\eqref{eq34}, we also obtain that%
\begin{equation}
\begin{cases}
u_{\lambda }\rightarrow u\,\; & \mbox{ weakly star in }\;L^{\infty
}((0,T);L^{r}(\Omega )), \\
v_{\lambda }\rightarrow v\, & \mbox{ weakly star in }\;L^{\infty
}((0,T);L^{2}(\Omega )), \\
v_{\lambda }\rightarrow v\text{ } & \mbox{ weakly in }%
\;L^{p}((0,T);W_{0}^{s,p}(\overline{\Omega })),%
\end{cases}
\label{eq35}
\end{equation}%
where $v_{\lambda }:=|u_{\lambda }|^{\frac{r-2}{p}}u_{\lambda }$ and $%
v:=|u|^{\frac{r-2}{p}}u$. We can then conclude from \eqref{eq34} and %
\eqref{eq35} that there exists a constant $C>0$ such that
\begin{align}
& \phi (u(t))+\frac{\beta }{2}\int_{0}^{t}\frac{C_{N,p,s}}{2}\int_{\mathbb{R}%
^{N}}\int_{\mathbb{R}^{N}}\frac{|v(x,\tau )-v(y,\tau )|^{p}}{|x-y|^{N+sp}}%
dxdyd\tau   \label{eq36} \\
\leq & \phi (u_{0})+t\mathcal{F}(\sigma )+t\frac{\beta }{2}+C\sigma ^{\frac{1%
}{r^{\prime }}}\int_{0}^{t}\Vert f(\tau )\Vert _{L^{r}(\Omega )}d\tau .
\notag
\end{align}%
The final estimate \eqref{eq36} implies that%
\begin{equation}
\limsup_{t\rightarrow 0^{+}}\phi (u(t))\leq \phi (u_{0}).  \label{eq37}
\end{equation}%
Since $u\in C([0,T];L^{2}(\Omega ))$ and $\phi $ is lower semi-continuous,
we have that%
\begin{equation}
\liminf_{t\rightarrow 0^{+}}\phi (u(t))\geq \phi (u_{0}).  \label{eq37bis}
\end{equation}%
Since $L^{r}(\Omega )$ is uniformly convex, we obtain from \eqref{eq37} and %
\eqref{eq37bis} that
\begin{equation}
u(t)\rightarrow u_{0}\;\mbox{ strongly in }\;L^{r}(\Omega )\;\mbox{ as }%
\;t\rightarrow 0^{+}.  \label{eq37tris}
\end{equation}

\noindent \textbf{Step 3} (\textbf{Solution to the original problem}) Let $%
T>0$ be fixed, $u_{0}\in D(\Phi )$ and%
\begin{equation}
f\in W^{1,p^{\prime }}(0,T;W^{-s,p^{\prime }}(\Omega )+L^{r^{\prime
}}(\Omega ))\cap L^{1+\gamma }(0,T;L^{r}(\Omega ))=:\mathcal{Y}_{f}
\label{fff}
\end{equation}%
for some $\gamma \geq 0$. Let $\mathcal{F}:[0,\infty )\rightarrow \lbrack
0,\infty )$ be an increasing differentiable function satisfying the
conclusion of Lemmas \ref{lem-33} and \ref{lem-10}.

\begin{itemize}
\item If $\gamma >0$, we take a non-increasing function $T_{\star
}:[0,\infty )\times \lbrack 0,\infty )\rightarrow (0,T]$ independent of $T$,
$u_{0}$ and $f$ such that
\begin{equation*}
T_{\star }(\eta ,\xi )\left[ \mathcal{F}(\eta +1)+\frac{\beta }{2}\right]
+C(\eta +1)^{\frac{1}{r^{\prime }}}T_{\star }(\eta ,\xi )^{\frac{\gamma }{%
\gamma +1}}\xi ^{\frac{1}{1+\gamma }}\leq \frac{1}{2}.
\end{equation*}

\item If $\gamma =0$, we take a non-increasing function $T_{f}:[0,\infty
)\rightarrow (0,T]$ which depends on $f$ but not on $T$ and $u_{0}$ such that%
\begin{equation*}
T_{f}(\eta )\left[ \mathcal{F}(\eta +1)+\frac{\beta }{2}\right] +C(\eta +1)^{%
\frac{1}{r^{\prime }}}\int_{0}^{T_{\star }(\eta )}\Vert f(\tau )\Vert
_{L^{r}(\Omega )}\leq \frac{1}{2}.
\end{equation*}
\end{itemize}

Let now%
\begin{equation*}
T_{0}:=T_{\star }\left( \phi (u_{0}),\int_{0}^{T}\Vert f(\tau )\Vert
_{L^{r}(\Omega )}^{1+\gamma }\;d\tau \right) >0\;\mbox{ if }\;\gamma >0
\end{equation*}%
and
\begin{equation*}
T_{0}:=T_{f}(\phi (u_{0}))\;\mbox{ if }\;\gamma =0.
\end{equation*}%
Since $\sigma =\phi (u_{0})+1$, it follows that%
\begin{equation*}
\sup_{t\in \lbrack 0,T_{0}]}\phi (u(t))<\sigma .
\end{equation*}%
Since $\phi (u(t))<\sigma $ for all $t\in \lbrack 0,T_{0}]$, it follows from %
\eqref{sub-ind} that $\partial _{V}\chi _{V_{\sigma }}(u(t))=\{0\},$ a.e. $%
t\in \lbrack 0,T_{0}];$ thus by \eqref{Indicator}, $\partial _{V}\Phi
^{\sigma }(u(t))=\partial _{V}\Phi (u(t))$ for a.e. $t\in \lbrack 0,T_{0}]$.
We have shown that $u$ is a strong solution of \eqref{acp} on $(0,T_{0})$
and hence, a strong solution of \eqref{par-pro} on $(0,T_{0})$ if the
initial datum $u_{0}\in D\left( \Phi \right) $.\newline

\noindent \textbf{Step 4} (\textbf{Final argument}). In this final step, we
remove the assumption on the initial datum $u_{0}\in D(\Phi )$. To this end,
for fixed time $T>0$, consider $u_{0}\in L^{r}(\Omega )$ and a function $f$
satisfying (\ref{fff}). Let $u_{0,n}\in D(\Phi )$ and $f_{n}\in
C^{1}([0,T];V)$ be sequences such that $u_{0,n}\rightarrow u_{0}$ strongly
in $L^{r}(\Omega )$ and $f_{n}\rightarrow f$ strongly in $\mathcal{Y}_{f}$.
Let $\sigma :=\phi (u_{0})+2$. Then for sufficiently large $n\geq n_{0}$, we
have%
\begin{equation*}
\phi (u_{0,n})\leq \phi (u_{0})+1\;\mbox{ and }\;\int_{0}^{T}\Vert
f_{n}(t)\Vert _{L^{r}(\Omega )}^{1+\gamma }dt\leq \int_{0}^{T}\Vert
f(t)\Vert _{L^{r}(\Omega )}^{1+\gamma }dt+1.
\end{equation*}%
Moreover, there exists a function $h\in L^{1+\gamma }(0,T)$ such that, after
passing to a subsequence if necessary, we also have%
\begin{equation*}
\Vert f_{n}(t)\Vert _{L^{r}(\Omega )}\leq h(t)\;\mbox{ for a.e. }\;t\in
(0,T).
\end{equation*}

We now consider the $n$-approximate problem%
\begin{equation}
\begin{cases}
\displaystyle\frac{du_{n}}{dt}(t)+\partial _{V}\Phi (u_{n}(t))-\partial
_{V}\psi (u_{n}(t))=f_{n}(t),\;\;\;\  & t\in (0,T) \\
u_{n}(0)=u_{0,n}. &
\end{cases}
\label{par-pro-n}
\end{equation}%
Note that \eqref{par-pro-n} possesses a strong solution $u_{n}$ on $%
(0,T_{0}) $ satisfying
\begin{equation}
\sup_{t\in \lbrack 0,T_{0}]}\phi (u_{n}(t))\leq \phi (u_{0,n})+1\leq \sigma
\label{eq43}
\end{equation}%
for some $T_{0}$ independent of $n$. Indeed, employing \textbf{Step 1} once
again, we have the following alternatives.

\begin{itemize}
\item If $\gamma >0$, then it is clear that
\begin{equation*}
T_{\star }\left( \phi (u_{0,n}),\int_{0}^{T}\Vert f_{n}\Vert _{L^{r}(\Omega
)}^{1+\gamma }dt\right) \geq T_{\star }\left( \Phi
(u_{0})+1,\int_{0}^{T}\Vert f\Vert _{L^{r}(\Omega )}^{1+\gamma }dt+1\right) .
\end{equation*}

\item If $\gamma =0$, since $\Vert f_{n}(t)\Vert _{L^{r}(\Omega )}\leq h(t)$%
, then we can choose the function $T_{h}:[0,\infty )\rightarrow (0,T]$ such
that
\begin{equation*}
T_{h}(\eta )\left[ \mathcal{F}(\eta +1)+\frac{\beta }{2}\right] +C(\eta +1)^{%
\frac{1}{r^{\prime }}}\int_{0}^{T_{h}(\eta )}|h(\tau )|d\tau \leq \frac{1}{2}%
\;\mbox{ and }\;T_{f_{n}}(\eta )\geq T_{h}(\eta ),
\end{equation*}%
for any $\eta \in \lbrack 0,\infty ).$
\end{itemize}

Hence, we can take $T_{0}>0$ uniformly with respect to $n$. In the remainder
of the proof, $C>0$ will denote a constant that is independent of $t$, $f$, $%
n$, and initial data, which only depends on the other structural parameters
of the problem. Such a constant may vary even from line to line. It remains
to derive uniform estimates for the solution $u_{n}$ with respect to $n$.
First, by estimates \eqref{eq41} and \eqref{eq42},%
\begin{equation}
\sup_{t\in \lbrack 0,T_{0}]}\Vert u_{n}(t)\Vert _{L^{2}(\Omega
)}+\int_{0}^{T_{0}}\frac{C_{N,p,s}}{2}\int_{\mathbb{R}^{N}}\int_{\mathbb{R}%
^{N}}\frac{|u_{n}(x,t)-u_{n}(y,t)|^{p}}{|x-y|^{N+sp}}dxdydt\leq C
\label{eq44}
\end{equation}%
and
\begin{equation}
\int_{0}^{T_{0}}t\left\Vert \frac{du_{n}}{dt}(t)\right\Vert _{L^{2}(\Omega
)}^{2}dt+\sup_{t\in \lbrack 0,T_{0}]}\frac{tC_{N,p,s}}{2}\int_{\mathbb{R}%
^{N}}\int_{\mathbb{R}^{N}}\frac{|u_{n}(x,t)-u_{n}(y,t)|^{p}}{|x-y|^{N+sp}}%
dxdy\leq C.  \label{eq45}
\end{equation}%
Since $\partial _{V}\Phi (u_{n}(t))=(-\Delta )_{p}^{s}u_{n}(t)$ (see
Proposition \ref{subdif}), then using \eqref{eq44} and the fact that (see
the proof of Proposition \ref{subdif})
\begin{align*}
\langle (-\Delta)_p^su_n,u_n\rangle_{V^\star,V}=\frac{C_{N,p,s}}{2}\int_{%
\mathbb{R} ^{N}}\int_{\mathbb{R}^{N}}\frac{|u_{n}(x,t)-u_{n}(y,t)|^{p}}{%
|x-y|^{N+sp}}dxdy
\end{align*}
we infer
\begin{equation}
\int_{0}^{T_{0}}\Vert \partial _{V}\Phi (u_{n}(t))\Vert _{W^{-s,p^{\prime
}}(\Omega )}^{p^{\prime }}dt\leq C.  \label{eq46}
\end{equation}%
Application of Lemma \ref{lem-33} also yields%
\begin{align}
\int_{0}^{T_{0}}\Vert \partial _{V}\Phi (u_{n}(t))\Vert _{L^{q^{\prime
}}(\Omega )}^{q^{\prime }}dt\leq& C\int_{0}^{T_{0}}\psi (u_{n}(t))dt  \notag
\\
\leq &\mathcal{F}(\sigma )\int_{0}^{T_{0}}\left( \Phi (u_{n}(t))+1\right)
^{1-\varepsilon }dt\leq C.  \label{eq47}
\end{align}%
Therefore, since $L^{q^{\prime }}(\Omega )\hookrightarrow V^{\star }$, it
follows from \eqref{par-pro-n} that
\begin{equation}
\int_{0}^{T_{0}}\left\Vert \frac{du_{n}}{dt}(t)\right\Vert _{V^{\star
}}^{q^{\prime }}dt\leq C.  \label{eq48}
\end{equation}%
Estimate \eqref{eq36} with $u=u_{n}$, $v=v_{n}=|u_{n}|^{\frac{r-2}{p}}u_{n}$%
, $u_{0}=u_{0,n}$ and $f=f_{n}$ gives the uniform estimate%
\begin{equation}
\int_{0}^{T_{0}}\frac{C_{N,p,s}}{2}\int_{\mathbb{R}^{N}}\int_{\mathbb{R}^{N}}%
\frac{|v_{n}(x,t)-v_{n}(y,t)|^{p}}{|x-y|^{N+sp}}dxdydt\leq C.  \label{eq49}
\end{equation}%
Since $2(p+r-2)/r\leq r$, it follows from \eqref{eq43} that the sequence $%
v_{n}$ is bounded in $L^{\infty }((0,T_{0});L^{2}(\Omega ))$. We also notice
the solution $u_{n}$ satisfies the estimates \eqref{eq41} and \eqref{eq42}
with a constant $C>0$ independent of $n$. These uniform estimates allow us
to pass to the limit, after a subsequence if necessary, such that as $%
n\rightarrow \infty $,%
\begin{equation}
\begin{cases}
u_{n}\rightarrow u\; & \mbox{ weakly star in }\;L^{\infty
}((0,T_{0});L^{r}(\Omega )), \\
u_{n}\rightarrow u & \mbox{ weakly in }\;L^{p}((0,T_{0});V), \\
t^{\frac{1}{p}}u_{n}\rightarrow t^{\frac{1}{p}}u & \mbox{ weakly start in }%
\;L^{\infty }((0,T_{0});W_{0}^{s,p}(\overline{\Omega} )), \\
v_{n}\rightarrow v & \mbox{ weakly star in }\;L^{\infty
}((0,T_{0});L^{2}(\Omega )), \\
v_{n}\rightarrow v & \mbox{ weakly  in }\;L^{p}((0,T_{0});W_{0}^{s,p}(%
\overline{\Omega} )), \\
\frac{du_{n}}{dt}\rightarrow \frac{du}{dt} & \mbox{ weakly  in }%
\;L^{q^{\prime }}((0,T_{0});V^{\star }), \\
\sqrt{t}\frac{du_{n}}{dt}\rightarrow \sqrt{t}\frac{du}{dt} &
\mbox{ weakly
in }\;L^{2}((0,T_{0});L^{2}(\Omega )), \\
\partial _{V}\Phi (u_{n}(\cdot ))\rightarrow g\left( \cdot \right) &
\mbox{
weakly  in }\;L^{p^{\prime }}((0,T_{0});W^{-s,p^{\prime }}(\Omega )), \\
\partial _{V}\psi (u_{n}(\cdot ))\rightarrow h\left( \cdot \right) &
\mbox{
weakly  in }\;L^{q^{\prime }}((0,T_{0});L^{q^{\prime }}(\Omega )).%
\end{cases}
\label{eq52}
\end{equation}%
The first two of (\ref{eq52}) follow from \eqref{eq43} and \eqref{eq44}. The
third and seventh convergence properties of (\ref{eq52}) follow from %
\eqref{eq45}. The fourth and fifth convergence properties are derived from
the fact that $v_{n}$ is bounded in $L^{\infty }((0,T_{0});L^{2}(\Omega ))$
and from \eqref{eq48}. The sixth convergence is an immediate consequence of %
\eqref{eq47}, while the convergence $\partial _{V}\Phi (u_{n}(\cdot
))\rightarrow g$ is a consequence of \eqref{eq46}. Finally, the last of (\ref%
{eq52}) follows from the sixth of (\ref{eq52}) and $\partial _{V}\Phi
(u_{n}(\cdot ))\rightarrow g$. Thus, we have shown%
\begin{equation*}
u\in C_{w}([0,T_{0}];L^{r}(\Omega ))\cap C((0,T_{0}];L^{2}(\Omega )).
\end{equation*}
We can now pass to strong convergence properties for the sequence $u_{n}$.
Since the embeddings $V\hookrightarrow L^{q}(\Omega )$ and $L^{r}(\Omega
)\hookrightarrow V^{\star }$ are compact, it follows that
\begin{equation}
u_{n}\rightarrow u\;\mbox{ strongly in }\;L^{p}((0,T_{0});L^{q}(\Omega
))\cap C([0,T_{0}];V^{\star }),  \label{eq58}
\end{equation}%
which together with \eqref{eq52} implies that $v=|u|^{\frac{r-2}{p}}u$.
Moreover, it follows from \eqref{eq47} and \eqref{eq58} that $%
u(t)\rightarrow u_{0}$ strongly in $V^{\star }$ as $t\rightarrow 0^{+}$.

It remains to show that $\partial _{V}\psi (u(t))=h(t)$ and $g(t)=\partial
_{V}\Phi (u(t))$ for a.e. $t\in (0,T_{0})$. Indeed, if $r<q$, as in the
proof of Lemma \ref{lem-33} we have using \eqref{eq16} that
\begin{align}
& \int_{0}^{T}\Vert u_{n}(t)-u(t)\Vert _{L^{q}(\Omega )}^{q}dt  \label{eq59}
\\
\leq& C\left( \int_{0}^{T_{0}}\frac{C_{N,p,s}}{2}\int_{\mathbb{R}^{N}}\int_{%
\mathbb{R}^{N}}\frac{|(u_{n}-u)(x,t)-(u_{n}-u)(y,t)|^{p}}{|x-y|^{N+sp}}%
dxdydt\right) ^{\frac{\alpha q}{p}}  \notag \\
&\cdot\left( \int_{0}^{T_{0}}\Vert u_{n}(t)-u(t)\Vert _{L^{r}(\Omega
)}^{(1-\alpha )q\nu }dt\right) ^{\frac{1}{\nu }},  \notag
\end{align}%
with $\alpha >0$ given by \eqref{eq15} and $\nu =\frac{p}{p-\alpha q}$. It
follows from \eqref{eq43} and \eqref{eq58} that
\begin{equation*}
u_{n}\rightarrow u\;\mbox{ strongly in }L^{(1-\alpha )q\nu
}((0,T_{0});L^{r}(\Omega )).
\end{equation*}%
and, from \eqref{eq44} and \eqref{eq59}, that
\begin{equation}
u_{n}\rightarrow u\;\mbox{ strongly in }L^{q}((0,T_{0});L^{q}(\Omega )).
\label{eq60}
\end{equation}%
We notice that $\partial _{V}\psi (u)=\partial _{L^{q}(\Omega )}\psi
_{L^{q}}(u)$ if $u\in V$, where $\psi _{L^{q}}:\;L^{q}(\Omega )\rightarrow
\lbrack 0,\infty )$ is defined by%
\begin{equation*}
\psi _{L^{q}}(u):=\frac{1}{q}\int_{\Omega }|u|^{q}dx,\qquad \text{ for all }%
u\in L^{q}(\Omega ).
\end{equation*}
Since the subdifferential $\partial _{L^{q}(\Omega )}\psi _{L^{q}}$ is
demi-closed in $L^{q}(\Omega )\times L^{q^{\prime }}(\Omega )$, we can apply
\cite[Proposition 1.1]{Ken} to infer that $h(t)=\partial _{V}\psi (u(t))$,
a.e. $t\in (0,T_{0})$. If $q\leq r$, then \eqref{eq60} follows from %
\eqref{eq43} and \eqref{eq58}. Hence, we have shown the first claim that $%
h(t)=\partial _{V}\psi (u(t))$ for a.e. $t\in (0,T_{0})$. In order to show
that $g(t)=\partial _{V}\Phi (u(t)),$ a.e. $t\in (0,T_{0})$, we use (\ref%
{eq58}) to take a set $I\subset (0,T_{0})$ such that $u_{n}(\tau
)\rightarrow u(\tau )$ strongly on $L^{q}(\Omega )$ for all $\tau \in I$ and
$|(0,T_{0})\setminus I|=0$. Hence, for all $\tau \in I$,
\begin{align*}
\int_{\tau }^{T_{0}}\langle \partial _{V}\Phi
(u_{n}(t)),u_{n}(t)\rangle_{V^\star,V} dt=& \int_{\tau }^{T_{0}}\langle
f_{n}(t),u_{n}(t)\rangle_{V^\star,V} dt \\
&+\int_{\tau }^{T_{0}}\langle \partial _{V}\psi
(u_{n}(t)),u_{n}(t)\rangle_{V^\star,V} dt \\
& -\frac{1}{2}\Vert u_{n}(T_{0})\Vert _{L^{2}(\Omega )}^{2}+\frac{1}{2}\Vert
u_{n}(\tau )\Vert _{L^{2}(\Omega )}^{2}.
\end{align*}%
Since by (\ref{eq52}), $u\in W^{1,2}((\tau ,T_{0});L^{2}(\Omega ))$, then
letting $n\rightarrow \infty $ in the preceding equality, we deduce%
\begin{align*}
\limsup_{n\rightarrow \infty }\int_{\tau }^{T_{0}}\langle \partial _{V}\Phi
(u_{n}(t)),u_{n}(t)\rangle _{V^{\star },V}dt\leq & \int_{\tau
}^{T_{0}}\langle f(t),u(t)\rangle _{V^{\star },V}dt \\
&+\int_{\tau }^{T_{0}}\langle \partial _{V}\psi (u(t)),u(t)\rangle
_{V^{\star },V}dt \\
& -\frac{1}{2}\Vert u(T_{0})\Vert _{L^{2}(\Omega )}^{2}+\frac{1}{2}\Vert
u(\tau )\Vert _{L^{2}(\Omega )}^{2} \\
& =\int_{\tau }^{T_{0}}\langle g(t),u(t)\rangle _{V^{\star },V}dt.
\end{align*}%
It follows from \eqref{eq52} that $g(t)=\partial _{V}\Phi (u(t))$, a.e. $%
t\in (\tau ,T_{0})$. Since $\tau $ was arbitrary and $|(0,T_{0})\setminus
I|=0$, we have that $g(t)=\partial _{V}\Phi (u(t))$, a.e. $t\in (0,T_{0})$.

It remains to show that $u(0)=u_{0}$ in the sense of $L^{r}(\Omega )$.
Estimate \eqref{eq36} with $u=u_{n}$, $v=v_{n}$, $u_{0}=u_{0,n}$ and $%
f=f_{n} $ allows us to pass to the limit as $n\rightarrow \infty $, to get%
\begin{equation*}
\phi (u(t))\leq \phi (u_{0})+t\left[ \mathcal{F}(\sigma )+\frac{\beta }{2}%
\right] +C\sigma ^{\frac{1}{r^{\prime }}}\int_{0}^{t}\Vert f(\tau )\Vert
_{L^{r}(\Omega )}d\tau ,
\end{equation*}%
for all $t\in \lbrack 0,T_{0}]$. Arguing exactly as in (\ref{eq37})-(\ref%
{eq37tris}) we easily find that $u(t)\rightarrow u_{0}$ strongly in $%
L^{r}(\Omega )$ as $t\to 0^+$ and $u\in C([0,T_{0}];L^{2}(\Omega ))$. We
have shown that $u $ is a strong solution to problem \eqref{acp} on $%
(0,T_{0})$ and hence, a strong solution to the initial-boundary value
problem \eqref{par-pro} on $(0,T_{0})$. The proof of the theorem is complete.

\subsection{Proof of Theorem \protect\ref{theo2}}

In this subsection we prove Theorem \ref{theo2}. We adapt a technique
exploited by \cite{LW} to derive blow-up type results for the parabolic
equation associated with the classical $p$-Laplace operator. We divide the
proof into two parts.

\noindent \textbf{Step 1 (}\textbf{Positive potential energies}\textbf{)}.
We first establish that there is a constant $\beta >\alpha $ such that%
\begin{equation}
\left\vert \left\Vert u\left( t\right) \right\Vert \right\vert
_{W_{0}^{s,p}\left( \overline{\Omega }\right) }\geq \beta   \label{1}
\end{equation}%
and%
\begin{equation}
\left\Vert u\left( t\right) \right\Vert _{L^{q}\left( \Omega \right) }\geq
C_{\ast }\beta   \label{2}
\end{equation}%
for all $t\geq 0$ (and for as long as the strong solution exists). First, we
notice that by definition (\ref{energy}) and the embedding (\ref{sob-emb}),
it holds%
\begin{align}
E\left( t\right) & \geq \frac{1}{p}\left\vert \left\Vert u\left( t\right)
\right\Vert \right\vert _{W_{0}^{s,p}\left( \overline{\Omega }\right) }^{p}-%
\frac{1}{q}C_{\ast }^{q}\left\vert \left\Vert u\left( t\right) \right\Vert
\right\vert _{W_{0}^{s,p}\left( \overline{\Omega }\right) }^{q}  \label{3} \\
& =\frac{1}{p}\tilde{x}^{p}-\frac{C_{\ast }^{q}}{q}\tilde{x}^{q}\overset{%
\text{def}}{=}h\left( \tilde{x}\right) ,  \notag
\end{align}%
where we have set $\tilde{x}:=\left\vert \left\Vert u\left( t\right)
\right\Vert \right\vert _{W_{0}^{s,p}\left( \overline{\Omega }\right) }$.
Clearly, the continuous function $h$ is increasing on $\left( 0,\alpha
\right) $ and decreasing on $\left( \alpha ,\infty \right) $ while $h\left(
\tilde{x}\right) \rightarrow -\infty $ as $\tilde{x}\rightarrow \infty $ and
$h\left( \alpha \right) =E_{0}$. Then, since $E\left( 0\right) <E_{0}$ it
immediately follows that one has a constant $\beta >\alpha $ such that $%
h\left( \beta \right) =E\left( 0\right) .$ On the other hand, setting $%
\tilde{x}_{0}=\left\vert \left\Vert u_{0}\right\Vert \right\vert
_{W_{0}^{s,p}\left( \overline{\Omega }\right) }$ then $h\left( \tilde{x}%
_{0}\right) \leq E\left( 0\right) =h\left( \beta \right) $ and $\tilde{x}%
_{0}\geq \beta $ on the account of (\ref{3}). In order to show (\ref{1}), we
proceed to prove it by contradiction. To this end, let us assume that $%
\left\vert \left\Vert u\left( t_{0}\right) \right\Vert \right\vert
_{W_{0}^{s,p}\left( \overline{\Omega }\right) }<\beta $ for some $t_{0}\in
\left( 0,T_{0}\right) $ on which the strong solution exists. By the
continuity of this norm we can choose $t_{0}>0$ such that $\left\vert
\left\Vert u\left( t_{0}\right) \right\Vert \right\vert _{W_{0}^{s,p}\left(
\overline{\Omega }\right) }>\alpha $. By (\ref{3}), we find that $E\left(
t_{0}\right) \geq h\left( \left\vert \left\Vert u\left( t_{0}\right)
\right\Vert \right\vert _{W_{0}^{s,p}\left( \overline{\Omega }\right)
}\right) >h\left( \beta \right) =E\left( 0\right) $ which contradicts the
fact that $E\left( t\right) \leq E\left( 0\right) $, for all $t\in \left(
0,T_{0}\right) ,$ on which the strong solution exists, with the latter
following easily by (\ref{en-ineq}). Hence, we have proved (\ref{1}). To
prove (\ref{2}), it remains to exploit (\ref{en-ineq}) once again together
with the definition of $E\left( t\right) $ and (\ref{1}) in order to see that%
\begin{equation*}
\frac{1}{q}\left\Vert u\left( t\right) \right\Vert _{L^{q}\left( \Omega
\right) }^{q}\geq \frac{1}{p}\left\vert \left\Vert u\left( t\right)
\right\Vert \right\vert _{W_{0}^{s,p}\left( \overline{\Omega }\right)
}^{p}-E\left( 0\right) \geq \frac{1}{p}\beta ^{p}-h\left( \beta \right) =%
\frac{C_{\ast }^{q}\beta ^{q}}{q}
\end{equation*}%
from which (\ref{2}) follows. Next, defining $H\left( t\right)
:=E_{0}-E\left( t\right) $, we have from \cite[Lemma 4]{LW} that
\begin{equation}
0<H\left( 0\right) \leq H\left( t\right) \leq \frac{1}{q}\left\Vert u\left(
t\right) \right\Vert _{L^{q}\left( \Omega \right) }^{q},  \label{4}
\end{equation}%
provided that $E\left( 0\right) <E_{0}$ and $\left\vert \left\Vert
u_{0}\right\Vert \right\vert _{W_{0}^{s,p}\left( \overline{\Omega }\right)
}>\alpha $, for as long as the strong solution exists.\newline

\noindent \textbf{Step 2 (}\textbf{Blow-up in }$L^{2}$\textit{-norm}\textbf{)%
}. As in \cite{LW} (and references therein), setting $G\left( t\right) :=%
\frac{1}{2}\left\Vert u\left( t\right) \right\Vert _{L^{2}\left( \Omega
\right) }^{2}$, we have%
\begin{align*}
G^{^{\prime }}\left( t\right) & =\int_{\Omega }u\left( t\right) \partial
_{t}u\left( t\right) dx=\int_{\Omega }\left\vert u\left( t\right)
\right\vert ^{q}dx-\left\vert \left\Vert u\left( t\right) \right\Vert
\right\vert _{W_{0}^{s,p}\left( \overline{\Omega} \right) }^{p} \\
& =\left( 1-p\right) \int_{\Omega }\left\vert u\left( t\right) \right\vert
^{q}dx-pH\left( t\right) -pE_{0},
\end{align*}%
owing to the definition of $H$. By Step 1, (\ref{1})-(\ref{2}), it is easy
to check that%
\begin{equation}
pE_{0}=\frac{\alpha ^{q}\left( q-p\right) }{\beta ^{q}q}C_{\ast }^{q}\beta
^{q}\leq \frac{\alpha ^{q}\left( q-p\right) }{\beta ^{q}}\left\Vert u\left(
t\right) \right\Vert _{L^{q}\left( \Omega \right) }^{q},  \label{5}
\end{equation}%
for as long as the strong solution exists. Setting $d=\left( 1-\alpha
^{q}/\beta ^{q}\right) \left( q-p\right) >0$, it follows that%
\begin{equation}
G^{^{\prime }}\left( t\right) \geq d\left\Vert u\left( t\right) \right\Vert
_{L^{q}\left( \Omega \right) }^{q}+pH\left( t\right) \geq 0.  \label{6}
\end{equation}%
On the other hand, by H\"{o}lder's inequality we observe that%
\begin{equation}
G^{\frac{q}{2}}\left( t\right) \leq L_{q,\Omega }\left\Vert u\left( t\right)
\right\Vert _{L^{q}\left( \Omega \right) }^{q},\text{ }L_{q,\Omega }:=\left(
\frac{1}{2}\right) ^{\frac{q}{2}}\left\vert \Omega \right\vert ^{\frac{q}{2}%
-1}.  \label{7}
\end{equation}%
Thus, combining (\ref{6})-(\ref{7}), we get $G^{^{\prime }}\left( t\right)
\geq \left( d/L_{q,\Omega }\right) G^{q/2}\left( t\right) $ and one can
directly integrate this inequality over $\left( 0,t\right) ,$ $t>0$. It
follows that%
\begin{equation*}
G^{\frac{q}{2}-1}\left( t\right) \geq \left( G^{1-q/2}\left( 0\right) -\frac{%
d}{d/L_{q,\Omega }}\left( \frac{q}{2}-1\right) t\right) ^{-1}
\end{equation*}%
which shows that $G\left( t\right) $ blows-up in finite time with a time $%
t\leq t_{\ast },$ given by (\ref{bl-time}). The proof is finished.

\section{Appendix}

We now prove Proposition \ref{subdif}, Lemma \ref{comp}, Lemma \ref{lem-9}
and Proposition \ref{en-in-prop}.

\begin{proof}[\bf Proof of Proposition \protect\ref{subdif}]
Let $f\in V^{\star }$ and $u\in V=W_{0}^{s,p}(\bOm)\cap L^{r}(\Omega )$.
We claim that $f=\partial _{V}\Phi (u)$ if and only if for every $v\in V$,
\begin{align} \label{int-sub}
&\langle f,v\rangle _{V^{\star },V}\\
=&\frac{C_{N,p,s}}{2}\int_{\mathbb{R}%
^{N}}\int_{\mathbb{R}^{N}}|u(x)-u(y)|^{p-2}\frac{(u(x)-u(y))(v(x)-v(y))}{%
|x-y|^{N+sp}}dxdy. \notag
\end{align}%
Indeed, let $f\in V^{\star }$ and $u\in V=W_{0}^{s,p}(\bOm )\cap
L^{r}(\Omega )$ be such that \eqref{int-sub} holds for every $v\in V$. Then %
\eqref{int-sub} holds with $v$ replaced by $v-u$. Using the following
well-known inequality
\begin{equation*}
\frac{|b|^{p}}{p}-\frac{|a|^{p}}{p}\geq |a|^{p-2}a(b-a),\text{ for any }%
a,b\in \mathbb{R},
\end{equation*}%
we get that for every $v\in V$,
\begin{align*}
&\Phi (v)-\Phi (u)\\
& =\frac{C_{N,p,s}}{2p}\int_{\mathbb{R}^{N}}\int_{\mathbb{R}%
^{N}}\frac{ |v(x)-v(y)|^{p}-|u(x)-u)(y)|^{p}}{|x-y|^{N+sp}}dxdy
\\
& \geq \frac{C_{N,p,s}}{2}\int_{\mathbb{R}^{N}}\int_{\mathbb{R}%
^{N}}|u(x)-u(y)|^{p-2}\frac{(u(x)-u(y))((v-u)(x)-(v-u)(y))}{|x-y|^{N+sp}}dxdy
\\
& =\langle f,v-u\rangle _{V^{\star },V}.
\end{align*}%
Hence, $f=\partial _{V}\Phi (u)$. Conversely, let $u\in V$ and set $%
f:=\partial _{V}\Phi (u)\in V^{\star }$. Then by definition, for every $v\in
V$, we have that
\begin{equation}
\Phi (v)-\Phi (u)\geq \langle f,v-u\rangle _{V^{\star },V}.  \label{subdif1}
\end{equation}%
Let $t\in \lbrack 0,1]$, $w\in V$ and set $v=tw+(1-t)u$ in \eqref{subdif1}.
Then
\begin{align}\label{subdif2}
&t\langle f,w-u\rangle _{V^{\star },V}\\
&\leq \frac{C_{N,p,s}}{2p}\int_{\mathbb{R%
}^{N}}\int_{\mathbb{R}^{N}}\frac{%
|(tw+(1-t)u)(x)-(tw+(1-t)u)(y)|^{p}-|u(x)-u(y)|^{p}}{|x-y|^{N+sp}}dxdy.\notag
\end{align}%
Using the Dominated Convergence Theorem, we get from \eqref{subdif2} that
\begin{align}
&\langle f,w-u\rangle _{V^{\star },V} \leq \lim_{t\downarrow 0}\frac{\Phi
(tw+(1-t)u)-\Phi (u)}{t}  \label{subdif3} \\
& =\frac{C_{N,p,s}}{2}\int_{\mathbb{R}^{N}}\int_{\mathbb{R}%
^{N}}|u(x)-u(y)|^{p-2}\frac{(u(x)-u(y))((w-u)(x)-(w-u)(y))}{|x-y|^{N+sp}}%
dxdy.  \notag
\end{align}%
Replacing $w$ by $u+w$ in \eqref{subdif3}, we get that for every $w\in V$,
\begin{align}\label{subdif4}
&\langle f,w\rangle _{V^{\star },V}\\
\leq& \frac{C_{N,p,s}}{2}\int_{\mathbb{R}%
^{N}}\int_{\mathbb{R}^{N}}|u(x)-u(y)|^{p-2}\frac{(u(x)-u(y))(w(x)-w(y))}{%
|x-y|^{N+sp}}dxdy.  \notag
\end{align}%
Since \eqref{subdif4} holds with $w$ replaced by $-w$, it follows that
\begin{equation*}
\langle f,w\rangle _{V^{\star },V}=\frac{C_{N,p,s}}{2}\int_{\mathbb{R}%
^{N}}\int_{\mathbb{R}^{N}}|u(x)-u(y)|^{p-2}\frac{(u(x)-u(y))(w(x)-w(y))}{%
|x-y|^{N+sp}}dxdy
\end{equation*}%
and we have shown \eqref{int-sub}. The proof of the claim is finished.
\end{proof}

\begin{proof}[\bf Proof of Lemma \protect\ref{comp}]
We prove the inequality by elementary analysis. Let the function $g:\mathbb{R%
}\times \mathbb{R}\rightarrow \mathbb{R}$ be given by%
\begin{equation}  \label{func-g}
g\left( z,t\right) =\left\vert z-t\right\vert ^{p-2}\left( z-t\right) \left(
\left\vert z\right\vert ^{r-2}z-\left\vert t\right\vert ^{r-2}t\right)
-C_{r,p}\left\vert \left\vert z\right\vert ^{\frac{r-2}{p}}z-\left\vert
t\right\vert ^{\frac{r-2}{p}}t\right\vert ^{p},
\end{equation}%
where we recall that
\begin{align*}
C_{r,p}=(r-1)\left(\frac{p}{r+p-2}\right)^p.
\end{align*}
Using the definition of $\mathcal{E}$, we first notice that (\ref%
{comp-energy}) is equivalent to showing that
\begin{equation}  \label{eq-GW}
g\left( z,t\right) \ge 0,\;\;\forall\;\left( z,t\right) \in \mathbb{R}^{2}.
\end{equation}%
Second, we mention that it is easy to verify that
\begin{align*}
g(z,t)=g(t,z),\;\;g(z,0)\ge 0,\; g(0,t)\ge 0\;\mbox{ and }\; g(z,t)=g(-z,-t).
\end{align*}
Therefore, without any restriction, we may assume that $z\ge t$ and hence,
we have that
\begin{equation*}
g\left( z,t\right) =(z-t)^{p-1} \left( \left\vert z\right\vert
^{r-2}z-\left\vert t\right\vert ^{r-2}t\right) -C_{r,p}\left\vert \left\vert
z\right\vert ^{\frac{r-2}{p}}z-\left\vert t\right\vert ^{\frac{r-2}{p}%
}t\right\vert ^{p},
\end{equation*}%
A simple calculation shows that
\begin{align}  \label{F1}
\frac{p}{r+p-2}\left[\left\vert z\right\vert ^{\frac{r-2}{p}}z-\left\vert
t\right\vert ^{\frac{r-2}{p}}t\right]=\int_t^z|\tau|^{\frac{r-2}{p}}\;d\tau.
\end{align}
Since the function $\varphi:\; \mathbb{R}\to\mathbb{R}$ given by $%
\varphi(\tau)=|\tau|^{p}$ ($p>1$) is convex, then using the well-known
Jensen inequality, it follows from \eqref{F1} that
\begin{align*}
C_{r,p}\left\vert \left\vert z\right\vert ^{\frac{r-2}{p}}z-\left\vert
t\right\vert ^{\frac{r-2}{p}}t\right\vert ^{p}&=(r-1)\left|\frac{p}{r+p-2}%
\left[\left\vert z\right\vert ^{\frac{r-2}{p}}z-\left\vert t\right\vert ^{%
\frac{r-2}{p}}t\right]\right|^p \\
&=(r-1)\left|\int_t^z|\tau|^{\frac{r-2}{p}}\;d\tau\right|^p \\
&=(r-1)(z-t)^p\left|\int_t^z|\tau|^{\frac{r-2}{p}}\;\frac{d\tau}{z-t}%
\right|^p \\
&\le (r-1)(z-t)^p\int_t^z|\tau|^{r-2}\;\frac{d\tau}{z-t} \\
&=(r-1)(z-t)^{p-1}\int_t^z|\tau|^{r-2}\;d\tau \\
&=(z-t)^{p-1} \left(\left\vert z\right\vert ^{r-2}z-\left\vert t\right\vert
^{r-2}t\right).
\end{align*}
We have shown \eqref{eq-GW} and this completes the proof of lemma.
\end{proof}

\begin{proof}[\bf Proof of Lemma \protect\ref{lem-9}]
Let $u\in D(\partial _{V}\Phi )=V$ and $\mu >0$. It follows from Proposition %
\ref{more} that $J_{\mu }^{\phi }u\in V=D(\partial _{V}\Phi )$. Therefore,
\begin{equation*}
\partial _{H}\phi _{\mu }(u)=\frac{u-J_{\mu }^{\phi }u}{\mu }\in V,\;\;\text{%
for all}\;u\in V.
\end{equation*}

Next, let $w\in D(\partial _{V}\Phi )\cap D(\partial _{H}\phi )=V$ be such
that $\partial _{H}\phi (w)\in V$. Since $\partial _{H}\phi
(w)=|w|^{r-2}w\in W_{0}^{s,p}(\bOm )$, it follows from Lemma \ref{comp}
that there exists a constant $\beta =\beta (r,p,s)>0$ such that
\begin{align}
& \frac{\beta C_{N,p,s}}{2}\int_{\mathbb{R}^{N}}\int_{\mathbb{R}^{N}}\frac{%
\left\vert |w(x)|^{\frac{r-2}{p}}w(x)-|w(y)|^{\frac{r-2}{p}}w(y)\right\vert
^{p}}{|x-y|^{N+sp}}dxdy  \label{EE1} \\
\leq & \frac{C_{N,p,s}}{2}\int_{\mathbb{R}^{N}}\int_{\mathbb{R}%
^{N}}|w(x)-w(y)|^{p-2}\frac{(w(x)-w(y))(|w(x)|^{r-2}w(x)-|w(y)|^{r-2}w(y))}{%
|x-y|^{N+sp}}dxdy  \notag \\
=& \int_{\Omega }(-\Delta )_{p}^{s}w|w|^{r-2}wdx=\int_{\mathbb{R}%
^{N}}(-\Delta )_{p}^{s}w|w|^{r-2}wdx=\langle \partial _{V}\Phi (w),\partial
_{H}\phi (w)\rangle _{V^{\star },V}.  \notag
\end{align}%
Now, let $v_{\mu }:=|J_{\mu }^{\phi }u|^{\frac{r-2}{p}}J_{\mu }^{\phi }u$.
Note that $v_\mu=0$ on $\mathbb{R}^N\setminus\Omega$ and using that $%
\partial _{H}\phi (J_{\mu }^{\phi }u)=\partial _{H}\phi _{\mu }(u)\in V$, we
get that
\begin{align*}
\int_{\mathbb{R}^N}|v_\mu|^p\;dx&=\int_{\Omega}|v_\mu|^p\;dx=\int_{%
\Omega}|J_{\mu }^{\phi }u|^{r-2+p}\;dx\le \int_{\Omega}|u|^{r-2+p}\;dx \\
&\le \int_{\Omega}|u|^{r-1}|u|^{p-1}\;dx\le
\left(\int_{\Omega}|u|^{p(r-1)}\;dx\right)^{\frac
1p}\left(\int_{\Omega}|u|^p\;dx\right)^{\frac{p-1}{p}}<\infty.
\end{align*}
Since $J_{\mu }^{\phi }u\in D(\partial _{V}\Phi )=V$ and $\partial _{H}\phi
(J_{\mu }^{\phi }u)=\partial _{H}\phi _{\mu }(u)\in V$, \eqref{EE1} allows
us to deduce that $v_{\mu }\in W_{0}^{s,p}(\bOm )$. We have shown the
first claim (\ref{EE0}).

Finally, assume that $u\in D(\partial _{V}\Phi ^{\sigma })=V_{\sigma }$.
Then by Propositions \ref{more} and \ref{prop-24}, there holds $J_{\mu
}^{\phi }u\in V_{\sigma }$. Hence, for all $u\in D(\partial _{V}\Phi
^{\sigma })=V_{\sigma }$ and $g\in \partial _{V}\Phi ^{\sigma }(u)$, we have
the estimate%
\begin{align}
\langle g,\partial _{H}\phi _{\lambda }(u)\rangle _{V^{\star },V}\geq &
\lambda ^{-1}\left( \Phi ^{\sigma }(u)-\Phi ^{\sigma }(J_{\lambda }^{\phi
}\left( u\right) )\right)  \label{EE2} \\
=& \lambda ^{-1}\left[ \Phi (u)-\Phi (J_{\lambda }^{\phi }\left( u\right) )%
\right]  \notag \\
\geq & \lambda ^{-1}\langle \partial _{V}\Phi (J_{\lambda }^{\phi }\left(
u\right) ),u-J_{\lambda }^{\phi }\left( u\right) \rangle _{V^{\star },V}
\notag \\
=& \lambda ^{-1}\langle \partial _{V}\Phi (J_{\lambda }^{\phi }\left(
u\right) ),\partial _{H}\phi _{\lambda }(u)\rangle _{V^{\star },V}.  \notag
\end{align}%
Combining \eqref{EE1} together with \eqref{EE2}, we easily obtain \eqref{EE0}%
. This finishes the proof of lemma.
\end{proof}

\begin{proof}[\bf Proof of Proposition \protect\ref{en-in-prop}]
As in Step 4 of the proof of Theorem \ref{main-theo}, we can pick a
sufficiently smooth sequence of initial data $u_{0,n}\in D(\Phi )\cap V$
such that $u_{0,n}\rightarrow u_{0}$ strongly in $V=W_{0}^{s,p}(\bOm )\cap
L^{r}(\Omega )$ as $n\to\infty$. Then we consider again the approximate problem (\ref%
{par-pro-n}) on $\left( 0,T_{0}\right) $ (of course, now $f_{n}\equiv 0$)
which we test it again with $\partial _{t}u_{n}\in L^{2}\left(
(0,T_{0});L^{2}\left( \Omega \right) \right) $. We note that every smooth
solution of the approximate problem (\ref{par-pro-n}) does possess such
regularity. We obtain%
\begin{equation}
\frac{d}{dt}E_{n}\left( t\right) +\left\Vert \partial _{t}u_{n}\left(
t\right) \right\Vert _{L^{2}\left( \Omega \right) }^{2}=0,  \label{en-smooth}
\end{equation}%
for all $t\in \left( 0,T_{0}\right) $, and where we have set%
\begin{equation*}
E_{n}\left( t\right) :=\frac{C_{N,p,s}}{2p}\int_{\mathbb{R}^{N}}\int_{%
\mathbb{R}^{N}}\frac{|u_{n}(x,t)-u_{n}(y,t)|^{p}}{|x-y|^{N+sp}}dxdy-\frac{1}{%
q}\int_{\Omega }|u_{n}\left( x,t\right) |^{q}dx.
\end{equation*}%
Integrating (\ref{en-smooth}) over the interval $\left( 0,t\right) $ allows
us to deduce%
\begin{equation*}
E_{n}\left( t\right) \leq E_{n}\left( 0\right)
\end{equation*}%
for all $t\in \left( 0,T_{0}\right) $. We can now easily conclude the proof
of Proposition \ref{en-in-prop} exploiting the foregoing inequality. Indeed,
recalling that $r>\frac{N(q-p)}{sp}$ with $q>p$, we see that $u_{n}\left(
t\right) \rightarrow u\left( t\right) $ strongly in $L^{q}\left( \Omega
\right) $, a.e. for $t\in \left( 0,T_{0}\right) $, owing to (\ref{eq60}).
Passing to the limit as $n\rightarrow \infty $, we first have $E_{n}\left(
0\right) \rightarrow E\left( 0\right) $ and then also%
\begin{equation*}
\int_{\Omega }|u_{n}\left( x,t\right) |^{q}dx\rightarrow \int_{\Omega
}|u\left( x,t\right) |^{q}dx\text{ a.e. for }t\in \left( 0,T_{0}\right) .
\end{equation*}%
This basic fact together with the weak lower-semicontinuity of the $%
W_{0}^{s,p}(\bOm)$-norm entails that $E\left( t\right) \leq \lim
\inf_{n\rightarrow \infty }E_{n}\left( t\right) $ and this concludes the
proof of (\ref{en-ineq}).
\end{proof}

\end{document}